\documentclass[a4paper,11pt]{article}
\usepackage{amsmath}
\usepackage{parskip}
\usepackage{amsfonts,amssymb,mathtools,amsthm}
\usepackage{graphicx,color,xcolor}

\newcommand{\E}{\ensuremath{\mathbb{E}}}

\def\1{1\!{\rm l}}

\graphicspath{{../}}

\usepackage{geometry}
\usepackage{comment}

\usepackage{amsmath}

\newtheorem{thm}{Theorem}
\newtheorem{prop}{Proposition}
\newtheorem{lem}{Lemma}

\newtheorem{rem}[thm]{Remark}

\newcounter{hypcounter}
\title{Minimax estimation of Functional Principal Components from noisy discretized functional data: the case of smooth processes}

\author{Nassim Bourarach$^*$, Franck Picard$^\dag$, Vincent Rivoirard$^{*,\ddag}$ and Angelina Roche$^\star$ \vspace{0.5cm} \\	
	$^*$CEREMADE, CNRS, Universit\'e Paris-Dauphine, Universit\'e PSL, 75016 Paris, France\\
	$^\dag$Ecole Normale Supérieure de Lyon, CNRS, Université Claude Bernard Lyon 1, 
	Lyon, France \\		
	$^\ddag$Universit\'e Paris-Saclay, CNRS, Inria, LMO, 91405, Orsay, France \\
	$^\star$Université Paris Cité, CNRS, MAP5, F-75006 Paris, France
}

\begin{document}

	\maketitle
	\begin{abstract}
		We study the minimax estimation of covariance eigenfunctions and eigenvalues in functional principal component analysis when $n$ trajectories are observed at $p$ common grid points with additive noise. We consider covariance kernels with arbitrary H\"older smoothness and no prescribed parametric decay of the eigenvalues. In this setting, kernel smoothness and local spectral separation play distinct roles: a minimax inconsistency result over the smoothness-only class shows that kernel regularity alone is not sufficient for minimax-consistent eigenfunction estimation. To capture this interplay, we introduce a class of processes that jointly controls the H\"older smoothness of the covariance kernel and a local relative inverse eigengap quantity at the target index $\ell$. Over this class, we derive non-asymptotic minimax lower bounds for eigenfunction estimation that disentangle sampling variability, discretization and spectral effects, revealing rates of order $\delta_\ell n^{-1}+p^{-2\alpha}$, where $\delta_\ell$ depends on the spectral rank $\ell$. We also obtain non-asymptotic lower bounds for eigenvalue estimation under a relative squared-error loss. We then construct a computable wavelet projection estimator based on Coiflet scaling functions and a quadrature scheme designed to accommodate arbitrary H\"older smoothness. For eigenfunction estimation, this estimator matches the minimax dependence on the sample size and grid resolution, up to the natural spectral factor, for any H\"older index $\alpha>0$. Finally, we show that the proposed framework covers several classical Gaussian processes and Karhunen--Lo\`eve constructions. In particular, a Karhunen--Lo\`eve based criterion links spectral decay, eigenfunction regularity and covariance-kernel smoothness, and yields controlled simulation settings illustrating the predicted phase transitions and least-favourable discretization effects.
	\end{abstract}
	
	\bigskip
	\bigskip
	\section{Introduction}\label{sec:intro}
	
	Functional Data Analysis (FDA) addresses statistical problems in which the observations are realizations of random functions rather than finite-dimensional vectors. Many textbooks focus on this area of Statistics either from a methodological viewpoint (see for example \cite{FerratyVieu,RamsaySilverman}) or from a theoretical one \cite{HsingEubanks}. Readers seeking a concise overview of the various facets of FDA may also find answers in the articles by \cite{Wang2015reviewFDA,gertheiss2023reviewFDA}. 
	
	Intrinsically, functional data are infinite dimensional, which motivates the use of dimension-reduction techniques. The most widely of which is naturally Functional Principal Component Analysis (FPCA) which aims to find a new functional basis that captures the variability of the curves. FPCA can be used as either a visualisation or dimension reduction tool for functional data. This approach has proven useful for regression \cite{Cardot_Vieu_Sarda_Regression}, classification \cite{Wang2021optclassif_FDA}, clustering \cite{Li_Chiou_Classif}, and exploratory analysis \cite{Keshri_Sharma_exploratory_anal_FPCA}, among many other statistical tasks.
	
	From a theoretical viewpoint, understanding the statistical properties of FPCA estimators has received considerable attention. Regardless of the specific type of FPCA estimators, the study of the statistical properties associated with the estimators of the eigenelements of the covariance of the process under study is almost always the first step. First studies were conducted in the case where the functional data are observed at all points without noise. In that case, the rates of convergence is parametric and degrades with the rank of the targeted eigenfunctions \cite{DPR82, mas, reiss2019nonasymptotic}. However, in practice, data are often observed on a grid, with possible noise. In that case, the estimation accuracy depends on several factors: the number of observed curves $n$, the number of sampling points per curve $p$, the regularity of the underlying process, and the structure of the covariance spectrum. These dependencies interact in subtle ways, leading to phase transitions in the achievable estimation rates.
	

	In this article, we consider that all curves share the same sampling points. This assumption is often referred as the common-grid design. In that case, aggregation of information across curves is possible, but unobserved regions must be approximated. Consequently, error bounds are of the form $n^{-1}+p^{-2\alpha}$ combining a sample-size term $n^{-1}$ and a grid-resolution term $p^{-2\alpha}$, where $\alpha$ encodes the Hölder regularity of the covariance kernel. Such decomposition with a double asymptotic (one of the estimation type and one of the approximation type), and the associated phase transition between the two regimes, has already been studied in the minimax framework for related problems such as the mean estimation by \cite{Cai2011mean}.
	
	More specifically, with regard to the estimation of covariance and its eigenelements in the common grid case, there exist results as in \cite{Guanqun,zhou2024theoryFPCA} or  \cite{Xiao} which presents asymptotic upper-bounds in the case of differentiable covariance kernel. Non-asymptotic results are obtained in \cite{descary2018} at the cost of analyticity assumptions, or in \cite{BPRR24}, which provides lower and upper bounds but is only interested in the case of rough processes ($\alpha$-Hölderian covariance kernel with $\alpha<1$). In the present work, we generalize and refine the results of \cite{BPRR24} to more regular processes than those considered, by extending the study to all
	eigenfunctions rather than just the first and second ones. Additionally, we propose a more nuanced treatment of the role played by the regularity of the covariance operator in the resulting error bounds ---as we consider the joint influence of the functional regularity and the spectrum's regularity--- which allows for a better understanding of the inherent statistical difficulty of the problem.
	
	Intuitively, a higher Hölder regularity coefficient $\alpha > 1$ for the autocovariance operator should lead to smaller estimation errors, as the statistical problem becomes “easier” in some sense. However, the theoretical analysis---particularly in the common-grid setting---becomes more challenging. For instance, obtaining sharp upper bounds on the error in this context requires more sophisticated quadrature techniques than simple Riemann sums. To address this, we propose the use of a wavelet-based approximation combined with a tailored quadrature method that accommodates the higher regularity of the underlying functions.
	
	In this work, we extend the minimax theory of FPCA under the noisy common-grid design to covariance kernels with arbitrary H\"older smoothness. Our analysis is non-asymptotic and separates the roles played by functional smoothness, grid resolution, sample size and spectral separation. The main contributions are as follows.
	
	\begin{enumerate}
		\item Although spectral separation is a standard requirement for stable eigendirection recovery, we formalize this limitation in the minimax sense by proving an inconsistency result. This shows that H\"older regularity of the covariance kernel alone does not ensure minimax consistency for eigenfunction estimation.
		
		\item Motivated by this obstruction, we introduce the class $\mathcal P(\alpha,L,\delta_\ell)$, which combines H\"older smoothness of the covariance kernel with a local bound on a relative inverse eigengap quantity at the target index $\ell$. This formulation separates the functional regularity of the process from the spectral difficulty of the target eigendirection, without imposing a parametric decay model on the whole sequence of eigenvalues.
		
		\item We establish non-asymptotic minimax lower bounds for eigenfunction estimation, as well as non-asymptotic lower bounds for eigenvalue estimation. These bounds disentangle the contributions of sampling variability, discretization and spectral separation, and exhibit the phase transitions induced by the common-grid design.
		
		\item We construct a computable wavelet projection estimator based on Coiflet scaling functions and a quadrature scheme tailored to high-order smoothness. The use of vanishing moments allows us to handle arbitrary H\"older regularity, including regimes beyond first-order smoothness. The resulting eigenfunction estimator matches the minimax dependence on the sample size and grid resolution, thereby establishing minimax optimality for eigenfunction estimation.
		
		\item We show that the proposed framework covers a broad range of Gaussian processes, including classical examples and processes generated through Karhunen--Lo\`eve expansions. In particular, we provide a Karhunen--Lo\`eve based criterion showing how polynomial eigenvalue decay, combined with suitable regularity bounds on the eigenfunctions, yields covariance kernels with prescribed H\"older regularity. This criterion also yields simulation settings in which the eigenelements and the kernel smoothness are known explicitly, allowing us to illustrate the predicted phase transitions and the least-favourable discretization mechanisms behind the lower bounds.
	\end{enumerate}

	\paragraph{Outline of the paper.}
	Section \ref{sec:model} introduces the model and regularity classes. 
	Section \ref{sec:lbounds} presents the minimax lower bounds, including an inconsistency result and explicit rates for both eigenfunctions and eigenvalues. 
	Section \ref{sec:ub} constructs the wavelet projection estimator and proves its optimality. 
	Section \ref{sec:illustration} provides examples of processes belonging to the regularity classes we introduce and Section \ref{sec:simu} reports simulation results. 
	Technical proofs and auxiliary lemmas are gathered in Section \ref{sec:proof}.
	\paragraph{Notation:} 
	We denote the set of non-negative integers by $\mathbb{N}$. For integers $a \le b$, the discrete interval $\{a, \dots, b\}$ is represented by $[\![ a,b]\!]$, and for any set $\mathcal{D}$, the indicator function is denoted by $1_{\mathcal{D}}$. The space of square-integrable functions on the unit interval, $\mathbb{L}^2([0,1])$, is equipped with the standard inner product $\langle f,g \rangle \coloneqq \int_0^1 f(t)g(t) \, dt$ and the induced norm $\|f\| \coloneqq \langle f,f \rangle^{1/2}$. For a measurable function $f$, we define the $\mathbb{L}^1$ and supremum norms as $\|f\|_1 \coloneqq \int |f|$ and $\|f\|_{\infty} \coloneqq \sup_x |f(x)|$, respectively. Regarding algebraic structures, $\|u\|_{\ell_2}$ refers to the Euclidean norm of a vector $u$, while for a matrix or operator $\mathbf{A}$, the Hilbert-Schmidt and operator norms are denoted by $\|\mathbf{A}\|_{\operatorname{HS}} \coloneqq (\mathrm{tr}(\mathbf{A}^\top \mathbf{A}))^{1/2}$ and $\|\mathbf{A}\|_{\mathcal{L}}$. For $f, g \in \mathbb{L}^2([0,1])$, the rank-one operator $f \otimes g$ is defined such that $(f \otimes g)(h) = \langle h,g \rangle f$. Finally, for real sequences $(a_j)$ and $(b_j)$, we write $a_j = o(b_j)$ if $a_j/b_j \to 0$, $a_j \sim b_j$ if $a_j/b_j \to 1$, and $a_j \lesssim b_j$ if there exists a constant $C > 0$ such that $a_j \le C b_j$ for all sufficiently large $j$.
	\section{Model and regular classes}\label{sec:model}
	Let $n$ and $p$ be two positive integers. We observe $p$ noisy evaluations of $n$ independent and identically distributed (i.i.d.) realizations of a centered random function $X$, denoted by $X_1, \dots, X_n$, on the regular grid $t_j \coloneqq (j-1)/p$. The observation model is given by:
	\begin{equation}
		\label{eq:design}
		Y_i(t_j) = X_i(t_j) + \varepsilon_{i,j}, \quad (i,j) \in [\![1,n]\!]\times[\![1,p]\!],
	\end{equation}
	where the errors $\varepsilon_{i,j}$ are i.i.d. centered Gaussian random variables with variance $\sigma^2 > 0$, assumed to be independent of $(X_i)_{i=1}^n$. We assume that $X$ is a centered mean-squared continuous random function on $[0,1]$ satisfying $\mathbb{E}[\left\|X\right\|^2] < \infty$, ensuring the existence of an almost surely continuous version compatible with the evaluations at $t_j$.
	Let $\mathcal P$ be the set of all such probability distributions on $\mathbb L^2([0,1])$. We denote by $P_X\in\mathcal P$ the distribution of $X$ and the associated covariance kernel as
	\begin{equation}\label{eq:def_K}
		K_{P_X}(s,t)=\mathbb{E}\left[X(s)X(t)\right],\qquad (s,t)\in [0,1].
	\end{equation}
	The kernel $K_{P_X}$ is then continuous (hence in $\mathbb{L}^2([0,1]^2)$) and a fortiori symmetric positive semi-definite. We can then define the covariance operator associated to $P_X$, which is the integral operator associated to the covariance kernel:
	\begin{align}\label{eq:def_Gamma}
		\Gamma_{P_X}: \mathbb{L }_2 & \mapsto \mathbb{L }_2 \\
		f(\cdot) & \mapsto  \int_{[0,1]}K_{P_X}(v,\cdot)f(v)\,dv.
	\end{align}
	It is then a well-defined compact, self-adjoint, positive semi-definite, trace-class (and Hilbert-Schmidt) operator on $\mathbb{L}^2([0,1])$. We assume that its eigenvalues are distinct. We denote by $(\lambda_j(\Gamma_{P_X}))_{j\geq 1}$ the sequence of eigenvalues, sorted in decreasing order, and by $(\psi_j(\Gamma_{P_X}))_{j\geq 1}$ the associated sequence of eigenfunctions.
	Note that when there is no ambiguity, the index $P_X$ and/or the dependence on $\Gamma_{P_X}$ in the eigenelements will be omitted to simplify the notation.
	
	We now introduce the regularity classes of processes that we consider. Let $L>0$, $\beta\in(0,1]$, $m\in\mathbb{N}$ and set $\alpha=m+\beta$. We define
	\begin{eqnarray}\label{PalphaL}
		\mathcal P(\alpha,L)&=&\Bigg\{P_X\in\mathcal P:\ K_{P_X} \text{ is }m-\text{times differentiable with }\nonumber\\
		&&\hspace{1cm}\left|\frac{\partial^m K_{P_X}}{\partial s^m}(s,t)-\frac{\partial^m K_{P_X}}{\partial s^m}(s',t)\right|\leq L|s-s'|^\beta, s,s',t\in[0,1]\Bigg\}. 
	\end{eqnarray}
	Note that this regularity class generalizes Assumption 2 in \cite{zhou2024theoryFPCA} which corresponds to the class $\mathcal P(\alpha,L)$ with $\alpha=2$.
	
	We prove in Theorem~\ref{thm:inconsistency} below that under Model~\eqref{eq:design}
	$$\inf _{\widehat{\psi}_{\ell}} \sup _{P_{X} \in \mathcal{P}(\alpha,L)} \mathbb{E}\left[\left\|\widehat{\psi}_{\ell}-\psi_{\ell}(\Gamma_{P_X})\right\|^{2}\right] \geq c>0,$$
	where the minimum is taken over all estimators, that is, all measurable functions of the observations $\{Y_i(t_j), i=1,\hdots,n, j=1,\hdots,p\}$ taking values in $\mathbb{L}^2([0,1])$ with unit norm. This means that the estimation of eigenfunctions cannot be consistent in the minimax sense in this regularity class. This motivates us to introduce a smaller regularity class, by adding constraints on the eigenvalues.
	We define for any positive integer $\ell$  
	\begin{equation}
		r_\ell (\Gamma_{P_X}):=\begin{cases*}
			\frac{\lambda_{1}(\Gamma_{P_X})\lambda_{2}(\Gamma_{P_X})}{\left|\lambda_{1}(\Gamma_{P_X})-\lambda_{2}(\Gamma_{P_X}) \right|^2}& if $\ell=1$ \\
			\max_{k\in\{-1,1\}}\frac{\lambda_{\ell}(\Gamma_{P_X})\lambda_{\ell+k}(\Gamma_{P_X})}{\left|\lambda_{\ell}(\Gamma_{P_X})-\lambda_{\ell+k}(\Gamma_{P_X})  \right|^2} & otherwise.
		\end{cases*}\label{eq:relative_eigengap}.
	\end{equation}
	The quantity $r_\ell$ is related to the notion of \textit{relative rank} (see \cite{mas}, \cite{jirak2022}). It provides a weighted measure of the worst local separation of the $\ell$-th eigenvalue from its neighbouring eigenvalues. Now, let $\ell\in\mathbb{N}\setminus\{0\}$ and $\delta_\ell>0$. We define the following subclass of $\mathcal P(\alpha,L)$: 
	\begin{equation}\label{eq:constrained_class}
		\mathcal P(\alpha,L,\delta_{\ell})=\Big\{P_X\in\mathcal P\left(\alpha,L\right):\ r_\ell(\Gamma_{P_X})\leq\delta_\ell\Big\}.
	\end{equation}
	In Section~\ref{sec:illustration}, we derive the order of $r_\ell(\Gamma_{P_X})$ under standard assumptions (polynomial and exponential) on the decay rate of the sequence of eigenvalues. More generally, Section~\ref{sec:illustration} provides several examples of processes belonging to the class $\mathcal P(\alpha,L,\delta_{\ell})$. Some are classical Gaussian processes, such as the Brownian motion, the Brownian bridge, the integrated Brownian motion and the fractional Brownian motion. In addition, Theorem~\ref{thm:regu_kern} introduces a broader class of processes constructed via the Karhunen-Loève decomposition and belonging to the class $\mathcal P(\alpha,L,\delta_{\ell})$.
	
	Having introduced the general model and the regularity classes of processes under consideration, we now turn to the fundamental question of the intrinsic statistical difficulty of estimating the eigenelements of the covariance operator. To this end, Section \ref{sec:lbounds} establishes minimax lower bounds for the estimation of both eigenfunctions and eigenvalues, starting with a general inconsistency result that highlights why additional assumptions on the spectrum are necessary.
	\section{Minimax lower-bounds}\label{sec:lbounds}
	\subsection{Inconsistency result for the estimation of eigenfunctions}
	We start by establishing that, under the sole assumption of kernel smoothness, the estimation of eigenfunctions cannot be consistent in general. The following non-asymptotic theorem formalizes this fundamental limitation.
	\begin{thm}{(Inconsistency result)\\}\label{thm:inconsistency}
		Let $n\in\mathbb{N}\setminus\{0\},\alpha,L\in\mathbb{R}^*_+$. Let $p\in\mathbb{N}$ such that $p\geq4$, $\ell\in\mathbb{N}$ such that $ \ell+1<\frac{p}{2}$.
		Then there exists $c>0$ that does not depend on $\alpha,L$ or $\sigma$ such that, 
		$$\inf _{\widehat{\psi}_{\ell}} \sup _{P_{X} \in \mathcal{P}(\alpha,L)} \mathbb{E}\left[\left\|\widehat{\psi}_{\ell}-\psi_{\ell}(\Gamma_{P_X})\right\|^{2}\right] \geq c, $$
		where the infimum is taken over all measurable functions of the data.
	\end{thm}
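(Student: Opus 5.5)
The idea is a two-point (Le Cam) argument built on a near-degenerate spectrum. We construct two Gaussian processes $P_0,P_1\in\mathcal P(\alpha,L)$ whose covariance kernels are nearly identical, so their observation laws cannot be distinguished, but whose $\ell$-th eigenfunctions are far apart in $\mathbb L^2$. Kernel smoothness places no restriction on how close $\lambda_\ell$ and $\lambda_{\ell+1}$ may be. A perturbation of size $\eta$ can therefore rotate the eigenpair $(\psi_\ell,\psi_{\ell+1})$ by a fixed angle as soon as the gap is of order $\eta$.

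\textbf{Construction.} Pick smooth orthonormal functions $\varphi_1,\dots,\varphi_{\ell+1}$ (for instance trigonometric functions, $C^\infty$ with bounded derivatives) and set
\[
K_0=\sum_{k=1}^{\ell+1}\mu_k\,\varphi_k\otimes\varphi_k .
\]
The $\mu_k$ are distinct, with $\mu_\ell=\mu+\eta$ and $\mu_{\ell+1}=\mu-\eta$ lying below the first $\ell-1$ values. Let $\theta$ be a rotation by $\pi/4$ in $\mathrm{span}(\varphi_\ell,\varphi_{\ell+1})$, put $\tilde\varphi_\ell=\theta\varphi_\ell$ and $\tilde\varphi_{\ell+1}=\theta\varphi_{\ell+1}$, and define $K_1$ as $K_0$ with the pair $(\varphi_\ell,\varphi_{\ell+1})$ replaced by $(\tilde\varphi_\ell,\tilde\varphi_{\ell+1})$. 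The $\ell$-th eigenfunctions are then $\varphi_\ell$ and $\tilde\varphi_\ell$, with $\|\varphi_\ell-\tilde\varphi_\ell\|^2=2-\sqrt2$; replacing $\tilde\varphi_\ell$ by $-\tilde\varphi_\ell$ changes nothing, since the two vectors remain non-collinear.

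Because the rotation fixes the projector $\varphi_\ell\otimes\varphi_\ell+\varphi_{\ell+1}\otimes\varphi_{\ell+1}$, we have
\[
K_1-K_0=\eta\big(\tilde\varphi_\ell\otimes\tilde\varphi_\ell-\tilde\varphi_{\ell+1}\otimes\tilde\varphi_{\ell+1}-\varphi_\ell\otimes\varphi_\ell+\varphi_{\ell+1}\otimes\varphi_{\ell+1}\big),
\]
so every $C^m$ and Hölder norm of $K_1-K_0$ is $O(\eta)$. Scaling all $\mu_k$ by a constant depending on $\alpha$ and $L$ puts both kernels in $\mathcal P(\alpha,L)$. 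The grid hypothesis $\ell+1<p/2$ lets us choose the $\varphi_k$ (say low-frequency Fourier modes) so that their sampled vectors on $t_j=(j-1)/p$ are linearly independent and well conditioned. This guarantees that the two discretized covariance matrices are nondegenerate in the relevant directions.

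\textbf{Indistinguishability.} The observations are i.i.d. Gaussian vectors in $\mathbb R^p$ with covariances $\Sigma_a=(K_a(t_j,t_{j'}))_{j,j'}+\sigma^2 I_p$, $a=0,1$. The KL divergence between $n$ samples is
\[
\frac n2\Big(\mathrm{tr}(\Sigma_0^{-1}\Sigma_1)-p-\log\det(\Sigma_0^{-1}\Sigma_1)\Big)\lesssim n\,\|\Sigma_0^{-1/2}(\Sigma_1-\Sigma_0)\Sigma_0^{-1/2}\|_{\mathrm{HS}}^2 .
\]
The difference $\Sigma_1-\Sigma_0$ has rank at most $4$, lives in the span of the sampled $\varphi_\ell,\varphi_{\ell+1}$, and has entries of order $\eta$. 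In that span $\Sigma_0$ is bounded below by roughly $p\mu$ (from sampling the eigenfunctions) plus $\sigma^2$. This gives a KL divergence of order $n\eta^2/\mu^2$, independent of $\sigma$. Taking $\eta=\mu/\sqrt n$ (or any smaller value) makes the KL divergence at most a universal constant such as $1/8$. The distinctness of eigenvalues is preserved since $\eta>0$.

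\textbf{Conclusion.} Le Cam's two-point lemma now gives
\[
\inf_{\widehat\psi_\ell}\max_{a\in\{0,1\}}\mathbb E_a\|\widehat\psi_\ell-\psi_\ell^{(a)}\|^2\ \ge\ \frac{\|\varphi_\ell-\tilde\varphi_\ell\|^2}{4}\big(1-\sqrt{\mathrm{KL}/2}\big)\ \ge\ c .
\]
The constant $c$ does not depend on $\alpha$, $L$ or $\sigma$, because the separation $2-\sqrt2$ is fixed and the KL bound is uniform in these parameters.

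\textbf{Main obstacle.} The hardest step is the uniform, non-asymptotic KL control. We must show that the sampled eigenfunctions keep $\Sigma_0^{-1}$ well behaved on the perturbation subspace for every $p\ge4$. Bounding the difference against $\sigma^2 I$ alone would introduce a $\sigma$-dependence. Instead we choose $\mu$ relative to $\sigma^2$, or use the lower bound $\Sigma_0\succeq \mu\,\Phi\Phi^\top$ with $\Phi$ the sampled basis, whose Gram matrix is $p$ times the identity for discrete Fourier modes when $\ell+1<p/2$. With $\eta$ a small multiple of $\mu/\sqrt n$, the Hölder constraint holds automatically, since the scaling of the $\mu_k$ can absorb $L$.
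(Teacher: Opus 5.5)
Your argument is correct, and its skeleton is the paper's. Both use a Le Cam two-point construction with Gaussian processes built on the sines $a_j$, rotate the $(\ell,\ell+1)$ eigenpair while keeping the relative eigengap of order $n^{-1/2}$, and use the discrete orthogonality $\langle \mathbf{a}_j,\mathbf{a}_{j'}\rangle_{\ell_2}=p\,\mathbf{1}_{j=j'}$ (valid since $\ell+1<p/2$) to reduce the comparison of the two observation laws to a $2\times 2$ block.

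Where you differ is in how the divergence is controlled. The paper uses the Gaussian Hellinger affinity and lets the two spectra differ, so that the diagonal entries of the two covariance matrices coincide in the sampled basis. It then tunes the rotation parameter $b_n$ through a quadratic inequality. Its final lower bound on $1-b_n$ still contains the factor $(2c_{L,m,\ell}(p-1)+\sigma^2)(c_{L,m,\ell}(p-1)+\sigma^2)/(c_{L,m,\ell}^2(p-1)^2)$. That bound therefore shrinks as $\sigma^2$ grows and depends on $L$, $\alpha$, $\ell$ and $p$.

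Your fixed $\pi/4$ rotation with identical spectra makes everything explicit. In the orthonormal basis $\mathbf{a}_\ell/\sqrt p,\ \mathbf{a}_{\ell+1}/\sqrt p$, the two covariances differ only by conjugating $\mathrm{diag}\big(p(\mu+\eta)+\sigma^2,\ p(\mu-\eta)+\sigma^2\big)$ by the rotation, so
\[
\mathrm{KL}\big(P_1^{\otimes n}\,\|\,P_0^{\otimes n}\big)=\frac{n\,p^2\eta^2}{(p\mu+\sigma^2)^2-p^2\eta^2}\le\frac{n\eta^2}{\mu^2-\eta^2}.
\]
This is decreasing in $\sigma^2$, and $L,\alpha,\ell$ enter only through the scale of $\mu$, which cancels. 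Your route thus yields a genuinely universal constant, i.e.\ the independence from $\alpha$, $L$, $\sigma$ (and $n$, $p$) that the theorem asserts. The paper's final bound, as written, does not provide this.

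Three small corrections:
\begin{itemize}
\item Take $\eta=\mu/(2\sqrt n)$ rather than $\mu/\sqrt n$. At $n=1$ the latter sets $\mu_{\ell+1}=0$ and the $\sigma$-free bound becomes vacuous. With $\mu/(2\sqrt n)$ the KL is at most $1/3$.
\item In the last step, Le Cam combined with Pinsker gives $\tfrac{2-\sqrt2}{8}\big(1-\sqrt{\mathrm{KL}/2}\big)$, not $\tfrac{2-\sqrt2}{4}(\cdots)$. This changes only the constant.
\item Drop the alternative of choosing $\mu$ relative to $\sigma^2$. The Hölder constraint forces $\mu$ to be small, and the exact block computation above (or your $\Phi\Phi^\top$ bound) makes such a choice unnecessary.
\end{itemize}
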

	This theorem shows that kernel functional regularity alone is not sufficient to ensure the consistency of eigenfunction estimators.
	Although the functional regularity assumptions on $K$ induces certain constraints on the spectrum (through the interplay of the quantities $\alpha, L$ and the eigenelements), these are insufficient to guarantee consistency. 
	The proof of this result is given in Subsection \ref{subsec:proof_inconsistency} and is based on Tsybakov's two-hypothesis method \cite{Tsybakov2009IntroductionTN}. 
	\subsection{Lower-bound for the estimation of eigenfunctions}
	By restricting the regularity class of processes under study and considering the class $ \mathcal{P}(\alpha,L,\delta_\ell)$, we obtain the following lower bound.
	\begin{thm}{(General minimax lower-bound for the $\ell$-th eigenfunction estimation)\\}\label{thm:eigfun_lb}
		Let $n\in\mathbb{N}\setminus\{0\},\alpha,L\in\mathbb{R}^*_+$. Let $p\in\mathbb{N}$ such that $p\geq \max\left(\left(2+\sqrt 2\right)^{\frac{1}{2\alpha}},4\right),$ $\ell\in\mathbb{N}$ such that $ \ell+1<\frac{p}{2}$ and $\delta_\ell\in\mathbb{R}^*_+$. Then, there exists a constant $c_\sigma>0$, that does not depend on $\ell$ and $c_\ell>0$ that does not depend on $\sigma^2$ such that
		$$\inf _{\widehat{\psi}_{\ell}} \sup _{P_{X} \in \mathcal{P}(\alpha,L,\delta_\ell)} \mathbb{E}\left[\left\|\widehat{\psi}_{\ell}-\psi_{\ell}(\Gamma_{P_X})\right\|^{2}\right] \geq \frac{c_\sigma\delta_\ell}{n}+\frac{c_\ell}{p^{2\alpha}},$$
		where the infimum is taken over all measurable functions of the data.
	\end{thm}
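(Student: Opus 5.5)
Since the right-hand side is a sum of two terms, it suffices to prove the two lower bounds $\frac{c_\sigma\delta_\ell}{n}$ and $\frac{c_\ell}{p^{2\alpha}}$ separately; the maximum of two quantities is at least half their sum. Each bound will follow from Tsybakov's two-hypothesis method \cite{Tsybakov2009IntroductionTN}, exactly as for Theorem~\ref{thm:inconsistency}. We exhibit $P_0,P_1\in\mathcal P(\alpha,L,\delta_\ell)$ whose $\ell$-th eigenfunctions are at $\mathbb L^2$-distance at least $2s$. We then show that the laws of the $n\times p$ data arrays are close, in Kullback--Leibler or $\chi^2$ divergence, uniformly in $n,p$. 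Both hypotheses will be centered Gaussian processes, so the data are Gaussian vectors in $\mathbb R^{np}$ with covariance $I_n\otimes(\Sigma_k+\sigma^2 I_p)$, where $(\Sigma_k)_{jj'}=K_k(t_j,t_{j'})$. This gives the explicit formula $\mathrm{KL}=\frac n2\big[\mathrm{tr}(A_1^{-1}A_0)-p-\log\det(A_1^{-1}A_0)\big]$ with $A_k=\Sigma_k+\sigma^2I_p$.

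\emph{Sampling term.} Fix smooth orthonormal functions $\varphi_1,\dots,\varphi_{\ell+1}$, for instance trigonometric ones, and take eigenvalues $\lambda_1>\dots>\lambda_{\ell+1}$ such that $r_\ell=\delta_\ell$ exactly. For $K_0=\sum_j\lambda_j\varphi_j\otimes\varphi_j$, the $\ell$-th eigenfunction is $\varphi_\ell$. For $K_1$, rotate $\varphi_\ell,\varphi_{\ell+1}$ (or $\varphi_{\ell-1}$, whichever realises the maximum in $r_\ell$) by an angle $\theta$. Both kernels are $C^\infty$ and can be rescaled so that they belong to $\mathcal P(\alpha,L)$; this rescaling leaves $r_\ell$ unchanged. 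The eigenfunction distance is of order $\theta$. The KL divergence is of order $n\theta^2\frac{(\lambda_\ell-\lambda_{\ell+1})^2}{(\lambda_\ell+\sigma^2c)(\lambda_{\ell+1}+\sigma^2c)}$, where $c$ accounts for the discretization. This is the classical computation of a rotation within a two-dimensional eigenspace. To make it work cleanly, pick $\varphi$'s whose discrete samples on the grid are orthogonal with equal norms $\approx p$, which holds for trigonometric functions with frequency $<p/2$; this is where $\ell+1<p/2$ enters. Then $\Sigma_k$ acts on a two-dimensional subspace with eigenvalues $p\lambda_\ell,p\lambda_{\ell+1}$, and the KL divergence is bounded by $n\theta^2/r_\ell$ up to a $\sigma$-dependent constant. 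Taking $\theta^2\asymp\delta_\ell/n$ (capped at a constant) gives the first bound with $c_\sigma$ independent of $\ell$.

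\emph{Discretization term.} Choose $P_0,P_1$ whose kernels coincide at all grid points $(t_j,t_{j'})$. The data then have identical laws, and the bound reduces to half the squared distance between the eigenfunctions. Start from $K_0$ as above, with gaps fixed so that $r_\ell\le\delta_\ell$. Let $h(t)=p^{-\alpha}L'\,b(pt)$, where $b$ is a smooth bump supported in $(0,1)$ and thus vanishing at the integers; then $h$ vanishes at every $t_j$ and has Hölder-$\alpha$ norm $\lesssim L'$. Set $\tilde\varphi_\ell=(\varphi_\ell+h)/\|\varphi_\ell+h\|$, after orthogonalising against the other $\varphi_j$, and let $K_1=\sum_j\lambda_j\tilde\varphi_j\otimes\tilde\varphi_j$. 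On the grid $\tilde\varphi_\ell(t_j)$ equals $\varphi_\ell(t_j)$ divided by a normalisation factor, and this mismatch can be absorbed by rescaling $\lambda_\ell$ slightly. Alternatively, define $K_1=K_0+\lambda(\,\varphi_\ell\otimes h+h\otimes\varphi_\ell+h\otimes h)$ and check positivity directly. Then $\|\psi_\ell(K_1)-\psi_\ell(K_0)\|^2\asymp\|h\|^2\asymp p^{-2\alpha}$, with a constant depending on $\ell$ through the required Hölder smallness of the perturbation. The condition $p^{2\alpha}\ge2+\sqrt2$ guarantees that the perturbed spectrum stays ordered, with $r_\ell\le\delta_\ell$.

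\emph{Main obstacle.} The delicate step is the discretization construction. We must keep $K_1$ positive semidefinite and exactly equal to $K_0$ on the grid, with a controlled Hölder-$\alpha$ constant including mixed derivatives. We also need its eigenvalues to preserve the order and the bound $r_\ell\le\delta_\ell$ after the normalisation shift. I would first try the exact orthonormal construction in which the support of $h$ is orthogonal to everything. If $\ell$-dependent constants blow up, I would accept them into $c_\ell$, which the statement allows.
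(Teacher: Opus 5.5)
Your overall strategy matches the paper's. You use two separate Le Cam two-point arguments. The $\delta_\ell/n$ term comes from a rotation in the plane of the trigonometric eigenfunctions $a_\ell,a_{\ell+1}$, whose grid samples are orthogonal with squared norm $p$ because $\ell+1<p/2$. The $p^{-2\alpha}$ term comes from two kernels that coincide on the grid.

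\textbf{Sampling term.} Your variant is correct and cleaner than the paper's. The paper also changes the eigenvalues of the second hypothesis so that the diagonal of the $(\ell,\ell+1)$ block of $\tilde G_1$ matches that of $\tilde G_0$. It then solves a quadratic for $b_n$ through the exact Hellinger affinity. With equal eigenvalues you get directly
\[
\mathrm{KL}=\frac n2\,\sin^2\theta\,\frac{p^2(\lambda_\ell-\lambda_{\ell+1})^2}{(p\lambda_\ell+\sigma^2)(p\lambda_{\ell+1}+\sigma^2)}\le\frac n2\,\sin^2\theta\,\frac{(\lambda_\ell-\lambda_{\ell+1})^2}{\lambda_\ell\lambda_{\ell+1}}.
\]
This equals $n\sin^2\theta/(2\delta_\ell)$ once the $(\ell,\ell+1)$ ratio is set to $\delta_\ell$, so no $\sigma$-dependent constant is needed. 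Your cap on $\theta^2$ is necessary, not a weakness. The minimax risk is bounded by an absolute constant: $\widehat\psi_\ell=0$ has risk $1$, and any unit-norm estimator has risk at most $4$. So no bound $c_\sigma\delta_\ell/n$ can hold when $\delta_\ell\gg n$, and the paper's own bound degrades there as well.

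\textbf{Discretization term.} This step fails as written, for two concrete reasons.

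\emph{Wrong rate from a single bump.} If $b$ is a bump supported in $(0,1)$, then $h=p^{-\alpha}L'b(p\,\cdot)$ is a single bump on $(0,1/p)$. Hence $\|h\|^2\asymp p^{-2\alpha-1}$, and you lose a factor $p$. The perturbation must occupy all $p$ cells, e.g.\ $h=p^{-\alpha}L'\sum_{k}b(p\,\cdot-k)$.

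\emph{Orthogonalisation breaks the grid match.} Orthogonalising $\varphi_\ell+h$ against the other $\varphi_j$ subtracts multiples of functions that do not vanish on the grid. Unless those inner products are already zero, this destroys the exact identity $K_1=K_0$ at the grid points, which is the whole mechanism.

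Both problems disappear with the paper's choice $h\propto a_p=\sqrt2\sin(2\pi p\,\cdot)$; your periodised bump also works. A $1/p$-periodic $h$ has Fourier support in $p\mathbb Z$. It is therefore automatically orthogonal to every $a_j$ with $1\le j<p$, and $a_p$ is orthogonal to constants as well. Then:
\begin{itemize}
\item $\tilde\varphi_\ell=(\varphi_\ell+h)/\sqrt{1+\|h\|^2}$ is already orthonormal to the remaining eigenfunctions;
\item setting $\tilde\lambda_\ell=(1+\|h\|^2)\lambda_\ell$ makes the grid covariances identical;
\item positivity is automatic;
\item $\|\tilde\varphi_\ell-\varphi_\ell\|^2=2-2(1+\|h\|^2)^{-1/2}\asymp p^{-2\alpha}$.
\end{itemize}
The paper's $\psi^*_{\ell,1}=(1-r_p)+\sqrt{1-(1-r_p)^2}\,a_p$ with $\lambda^*_{\ell,1}=(1-r_p)^{-2}\lambda^*_{\ell,0}$ is exactly this construction with $\varphi_\ell\equiv 1$.

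The eigenvalue ordering and $r_\ell\le\delta_\ell$ are secured by giving the base spectrum a fixed margin; the paper takes $\lambda_j\propto e^{-\gamma_\ell j}$ with a $\log 4$ buffer. Mixed derivatives are irrelevant, because $\mathcal P(\alpha,L)$ only constrains $\partial_s^mK$ in $s$. So the \emph{main obstacle} you anticipate does not actually arise.
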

	Theorem~\ref{thm:eigfun_lb} is a direct consequence of the results of Propositions~\ref{prop:eigfun_lb_n} and \ref{prop:eigfun_lb_p} proved in Subsection~\ref{subsec:proof_eigfun_lb}. 
	
	Note that the bound, established in the non-asymptotic setting and under very mild assumptions, clearly separates the effects of the sample size and the grid size. We retrieve the phase-transition phenomenon described in the literature on the estimation of the mean function of discrete functional data (see \cite{Cai2011mean}): when $n$ is small compared to $p^{2\alpha}$, the rates of convergence is determined solely by the size $n$ of the sample. Conversely, when $n$ is large compared to $p^{2\alpha}$, the discretization has a non-negligible impact on the quality of estimation of eigenfunctions. This phenomenon, for the minimax lower-bound, has already been highlighted in \cite{BPRR24}, but only in the context of the estimation of the first or second eigenfunction, for non-regular processes ($\alpha<1$) and without taking into account for the effect of the spectrum and the rank of the eigenfunction on the bound (captured here by $\delta_\ell$ and $c_\ell$). This last point is critical because, as Theorem \ref{thm:inconsistency} shows, without precise control of the spectrum, the lower bound can be larger than a strictly positive constant. This is a point that is often implicitly handled in the literature by having a fixed decay on the eigenvalues (as in Equations \eqref{def:pol_regime} and \eqref{def:exp_regime}). However, the present result is more general and does not require specifying a parametric form for the decay of the eigenvalues.
	\subsection{Lower-bound for the estimation of eigenvalues}
	We now study the case of eigenvalues estimation. Intuitively, the eigenvalue captures the overall energy along an eigendirection. Hence, one might expect the lower bounds to be smaller. This intuition is confirmed by the following theorem.
	\begin{thm}{(General minimax lower-bound for the $\ell $-th eigenvalue estimation)}\label{Thm_eigval}\\
		Let $n\in\mathbb{N}\setminus\{0\},\alpha,L\in\mathbb{R}^*_+$. Let $p\in\mathbb{N}$ such that $p\geq \max\left(\left(2+\sqrt 2\right)^{\frac{1}{2\alpha}},4\right)$, $\ell\in\mathbb{N}$ such that $ \ell+1<\frac{p}{2}$ and $\delta_\ell\in\mathbb{R}^*_+$.
		Then, there exist $c'>0$ that does not depend on $\sigma$ and $\ell$ and $c_{\sigma}'>0$ that does not depend on $\ell$ such that
		$$\inf_{\widehat{\lambda}_{\ell}} \sup _{P_{X} \in \mathcal{P}(\alpha, L)} \mathbb{E}\left[\left(\frac{\widehat{\lambda}_{\ell}-\lambda_{\ell}(\Gamma_{P_X})}{\lambda_{\ell}(\Gamma_{P_X})}\right)^{2}\right] \geq \frac{c_{\sigma}'}{n}+\frac{c'}{p^{4\alpha}},$$
		where the infimum is taken over all measurable functions of the data.
	\end{thm}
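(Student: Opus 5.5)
Since $\max(a,b)\ge (a+b)/2$, it suffices to prove separately that the minimax risk is at least a constant times $1/n$ and at least a constant times $p^{-4\alpha}$. The supremum is over $\mathcal P(\alpha,L)$, so no eigengap constraint has to be kept; we only need the kernels to have the required Hölder regularity. Both bounds will come from Le Cam's two-point method \cite{Tsybakov2009IntroductionTN}. For the relative squared loss, if two hypotheses $P_0,P_1$ satisfy $|\lambda_\ell(P_0)-\lambda_\ell(P_1)|/\max(\lambda_\ell(P_0),\lambda_\ell(P_1))\ge 2s$ and the laws of the observations are at Kullback--Leibler (or $\chi^2$) distance $O(1)$, then the risk is at least $c\,s^2$. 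This uses the triangle inequality after normalizing by a common scale, which is possible because the two eigenvalues are comparable.

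\textbf{Sample-size term.} Take $X=\sum_{k\le \ell+1}\sqrt{\lambda_k}\,\xi_k\varphi_k$ with i.i.d. standard Gaussian $\xi_k$. The $\varphi_k$ are fixed smooth orthonormal functions, for instance trigonometric, chosen with $\ell+1<p/2$ so that their evaluations on the grid remain orthogonal vectors; this is the role of the constraint on $\ell$. The eigenvalues are chosen well separated and scaled so that the kernel lies in $\mathcal P(\alpha,L)$. The alternative multiplies only $\lambda_\ell$ by $(1+\gamma/\sqrt n)$. The observation vectors $(Y_i(t_j))_j$ are Gaussian with covariances $\Sigma_0=\Phi\Lambda\Phi^\top+\sigma^2 I$ and $\Sigma_1$. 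Their KL divergence equals $n$ times the Gaussian KL between two covariances that differ by a rank-one perturbation along $\Phi e_\ell$, and this is $O(n\gamma^2/n)$. The relative eigenvalue gap is $\gamma/\sqrt n$, so the two-point method gives $c'_\sigma/n$. The constant depends on $\sigma$ through the KL computation, since the noise only decreases the divergence, but it can be made independent of $\ell$.

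\textbf{Discretization term.} Here I would use a perturbation that is invisible on the grid. Let $h(t)=L' p^{-\alpha}\,\phi(pt)$, where $\phi$ is a smooth periodic bump (e.g. $\sin^2$-type) that vanishes at the integers, so that $h(t_j)=0$ for all $j$. Then $h$ is $\alpha$-Hölder with constant of order $L$. Hypothesis $0$ is a process $X_0$ as above. Hypothesis $1$ is $X_1=X_0+\eta h$, with $\eta$ an independent standard Gaussian. Since $h$ vanishes on the grid, the observation laws are identical, and the KL divergence is zero for every $n$. The kernel becomes $K_0+h\otimes h$, which remains in $\mathcal P(\alpha,L)$ after adjusting constants. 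The eigenvalue shift is controlled by standard perturbation theory: $\lambda_\ell(K_0+h\otimes h)-\lambda_\ell(K_0)=\langle h,\varphi_\ell\rangle^2+O(\|h\|^4/\text{gap})$. Choosing $h$ with nonzero projection on $\varphi_\ell$, this shift is of order $p^{-2\alpha}$. This is where the condition $p^{2\alpha}\ge 2+\sqrt2$ should enter, to keep the perturbation small relative to the gaps. The relative gap is then $\asymp p^{-2\alpha}$, which yields the term $c'p^{-4\alpha}$.

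\textbf{Main obstacle.} The delicate step is making $\langle h,\varphi_\ell\rangle$ of order $\|h\|$ while $h$ vanishes on the grid, with constants uniform in $\ell$. A rapidly oscillating $h$ is nearly orthogonal to a low-frequency $\varphi_\ell$. I would avoid this by taking $\varphi_\ell$ itself proportional to $h$ normalized, i.e. a grid-invisible eigenfunction, and putting the remaining $\varphi_k$ in the grid-visible span. The alternative then simply rescales $\lambda_\ell$ by adding $\eta h$, so the perturbation becomes exact rather than first order. The orthogonality of $h$ to the other $\varphi_k$ and a small enough $\|h\|^2\asymp p^{-2\alpha}$ relative to $\lambda_\ell$ must then be checked, and the ordering of the eigenvalues must be preserved.
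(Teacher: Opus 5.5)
Your reduction to two separate bounds and your sample-size argument are essentially the paper's. You use the same trigonometric eigenfunctions, perturb only $\lambda_\ell$ by a factor $1+O(n^{-1/2})$, and compute the divergence in the eigenbasis of the grid covariance; using KL instead of the Hellinger affinity is immaterial.

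The gap is in the discretization construction you end up with. Once $\varphi_\ell=h/\|h\|$, the $\ell$-th eigenfunction of $K_0$ is a unit-norm function oscillating at frequency $p$ (e.g.\ $\sqrt{8/3}\,\sin^2(\pi p\,\cdot)$). So $\|\varphi_\ell\|_\infty\asymp 1$, while the $\beta$-H\"older seminorm of $\varphi_\ell^{(m)}$ is of order $p^{\alpha}$. The term $\lambda_\ell\varphi_\ell(s)\varphi_\ell(t)$ therefore contributes of order $\lambda_\ell p^{\alpha}$ to the constant in the definition of $\mathcal P(\alpha,L)$, and the smooth low-frequency terms cannot offset this at scale $|s-s'|\asymp 1/p$. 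Your claim that $K_0+h\otimes h$ stays in the class was justified for a $K_0$ built from fixed smooth $\varphi_k$. It fails for the new $K_0$ as soon as $\lambda_\ell$ is of order one. That is exactly what your target requires: a relative gap $\asymp p^{-2\alpha}$ obtained from $\|h\|^2\asymp p^{-2\alpha}$. This membership condition is not on your list of things to check, and with your scaling neither hypothesis lies in $\mathcal P(\alpha,L)$ for large $p$.

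The repair is to take $\lambda_\ell\le cLp^{-\alpha}$, with the other eigenvalues placed so the ordering is kept. This is legitimate because the loss is scale-invariant and $\mathcal P(\alpha,L)$ imposes no gap condition. Both kernels are then in the class and the grid laws coincide exactly. The relative gap $\|h\|^2/(\lambda_\ell+\|h\|^2)$ can even be taken of order one (e.g.\ $\|h\|^2=\lambda_\ell$). This gives a lower bound by an absolute constant, which implies the stated $c'_\sigma/n+c'p^{-4\alpha}$ a fortiori (since $n\ge 1$, $p\ge 4$). It also shows that bound is far from sharp on $\mathcal P(\alpha,L)$.

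The paper instead keeps $\lambda_{\ell,0}$ of order one and only tilts the eigenfunction. It sets $\psi_{\ell,0}=1$ and $\psi_{\ell,1}=(1-r_p)+\sqrt{1-(1-r_p)^2}\,a_p$, with $a_p(t)=\sqrt2\sin(2\pi pt)$, $r_p=p^{-2\alpha}$ and $\lambda_{\ell,1}=(1-r_p)^{-2}\lambda_{\ell,0}$.
\begin{itemize}
\item The invisible component has amplitude $\asymp p^{-\alpha}$, so its H\"older cost is only $O(\lambda_{\ell,0})$.
\item The grid covariances coincide exactly, with no perturbation theory needed.
\item The relative gap $r_p(2-r_p)$ gives precisely $p^{-4\alpha}$.
\end{itemize}
This also shows your ``main obstacle'' is avoidable: a grid-invisible $h$ can have $\langle h,\varphi_\ell\rangle\asymp\|h\|$ when $\varphi_\ell$ is paired with an alias. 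For instance, your $\sin^2(\pi p\,\cdot)$ has mean $1/2$, so it works against $\varphi_\ell\equiv 1$, which is the paper's choice.
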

	Theorem \ref{Thm_eigval} is a direct consequence of the results in Propositions \ref{prop:eigval_lb_n} and \ref{prop:eigval_lb_p} proved in Subsection \ref{subsec:proof_eigval_lb}.
	
	To the best of our knowledge, no minimax lower bound has been established for eigenvalue estimation in the functional data setting, whether the observations are discrete or fully observed. 
	In contrast to eigenfunction estimation, the functional regularity of the kernel appears to be sufficient, and no condition on $r_\ell$ is required. As previously, we also observe a phase transition phenomenon, occurring for $n\approx p^{4\alpha}$. Therefore, as soon as $p^{4\alpha}$ is much marger than $n$, even with partially observed trajectories, one attains the optimal parametric rate of convergence, as if the entire trajectories were observed.
	\section{Wavelet estimation of eigenfunctions and minimax upper bounds}\label{sec:ub}
	The lower bounds established in Section~\ref{sec:lbounds} provide a benchmark for any estimator of $\psi_{\ell}(\Gamma_{P_X})$. We now construct an estimator that achieves these rates, thereby establishing minimax optimality. To this end, we adopt a wavelet projection approach, which naturally adapts to the smoothness of the covariance kernel and accommodates discretized observations.
	\subsection{A specific wavelet projection estimator}\label{def_estim}
	In this section we introduce the wavelet formalism used in the construction of our estimator.
	Wavelet bases offer a flexible representation of smooth functions and are particularly well suited
	to approximate covariance kernels with Hölder regularity. They combine orthogonality, locality
	and multiscale structure, providing a natural way to control approximation errors. 
	In what follows, we briefly recall the main principles of
	wavelet multiresolution analysis, establish the link with classical Fourier decompositions, and
	justify the use of Coiflet scaling functions in our estimator.
	
	Wavelet theory relies on the notion of MultiResolution Analysis (MRA), which consists in
	constructing two sequences of nested closed subspaces $(V_j)_{j\in\mathbb{Z}}$ and $(W_j)_{j\in\mathbb{Z}}$
	of $\mathbb{L}_2(\mathbb{R})$ satisfying
	\[
	V_j \subset V_{j+1}, \qquad V_{j+1} = V_j \oplus W_j, \qquad
	\overline{\bigcup_{j\in\mathbb{Z}} V_j} = \mathbb{L}_2(\mathbb{R}), \qquad
	\bigcap_{j\in\mathbb{Z}} V_j = \{0\}.
	\]
	The space $V_j$ represents the information contained in the signal up to the resolution level
	$2^{-j}$, while $W_j$ gathers the additional details at the same scale. The subspace $V_j$
	is generated by the dilations and translations of a scaling function $\phi$, whereas
	$W_j$ is generated by the corresponding wavelet function~$\psi$:
	\[
	\phi_{j,k}(t) = 2^{j/2}\phi(2^j t - k), \qquad
	\psi_{j,k}(t) = 2^{j/2}\psi(2^j t - k), \qquad j,k\in\mathbb{Z}.
	\]
	For any $j_0\in\mathbb{Z}$, the family $\{\phi_{j_0,k}, \psi_{j,k}: j\ge j_0,\,k\in\mathbb{Z}\}$ then forms an orthonormal
	basis of $\mathbb{L}_2([0,1])$ after boundary correction. Every function $f\in \mathbb{L}_2([0,1])$ can be
	decomposed as
	\[
	f = \sum_{k}\langle f, \phi_{j_0,k}\rangle \phi_{j_0,k}
	+ \sum_{j\ge j_0}\sum_k \langle f, \psi_{j,k}\rangle \psi_{j,k}.
	\]
	This representation can be seen as a localized analogue of the Fourier expansion:
	the index $j$ acts as a frequency band, while $k$ controls the spatial localization.
	
	When $f$ has Hölder regularity $\alpha>0$, the wavelet coefficients
	$|\langle f, \psi_{j,k}\rangle|$ decay as $2^{-j(\alpha+1/2)}$, implying that the orthogonal
	projection
	\[
	\Pi_J f = \sum_k \langle f, \phi_{J,k}\rangle \phi_{J,k}
	\]
	onto the subspace $V_J$ satisfies the classical approximation bound
	\begin{equation}\label{eq:approx_func_wav}
		\|f - \Pi_J f\|_{L^2}^2 \lesssim 2^{-2J\alpha}. 
	\end{equation}
	This result is standard in wavelet approximation theory
	(see, for instance, Proposition 9.3 of \cite{Mallat1999}).
	This allows wavelet projections to provide optimal approximations for smooth functions.
	
	Among compactly supported orthonormal wavelets, the Coiflet family
	introduced by Daubechies \cite{Daubechies1992} is of particular interest for our application. A Coiflet system of order
	$m$ is defined such that the scaling and the wavelet functions possess $m$ and $m+1$ vanishing
	moments respectively:
	\begin{equation}
		\label{eq:van_mom}
		\int_0^1 t^r \phi(t)\,dt = 1_{\{r=0\}}, \qquad \int_0^1 t^r \psi(t)\,dt = 0, \qquad
		r = 0,\ldots,m.
	\end{equation}
	This property ensures that projection errors for $m$-times differentiable functions decrease
	at the optimal rate $O(2^{-2Jm})$ and will allow us to use a precise quadrature to compute the wavelet coefficients from the discrete observations. Moreover, Coiflets enjoy high smoothness together with
	compact support, which allows efficient numerical implementation and controlled boundary
	behaviour.
	
	In our setting, only the scaling functions are required to construct the projection estimator. To build our estimator we use the Coiflet scaling basis $(\phi_{j,k})_{j\in\mathbb{N}\setminus\{0\},(k)\in[\![0,2^j-1]\!]}$ with vanishing moments.  This choice ensures that the exact and approximate projection operators
	\begin{equation}\label{eq:approx_proj}
		\begin{aligned}
			\Pi_p(f):=\sum_{k=0}^{p-1}\left \langle f,\phi_{\operatorname{log}_2(p),k} \right \rangle\phi_{\operatorname{log}_2(p),k},\\
			\widetilde{\Pi}_p(f):=\sum_{k=0}^{p-1}p^{-\frac{1}{2}}f\left(\frac{k}{p}\right)\phi_{\operatorname{log}_2(p),k}
		\end{aligned}
	\end{equation}
	are sufficiently close to each other.
	
	In fact, the estimation of the covariance kernel $K = \mathbb{E}[X \otimes X]$ can be carried out in successive steps, namely, approximation, quadrature, and estimation, based respectively on
	\begin{description}
		\item[] $$K_{\Pi_p} := \mathbb{E}\left [\Pi_p(X)\otimes \Pi_p(X) \right ],$$
		\item[] $$K_{\widetilde{\Pi}_p} :=\mathbb{E}\left [ \widetilde{\Pi}_p(X)\otimes \widetilde{\Pi}_p(X) \right ]$$
		and
		\item[] $$\widehat{K_{\widetilde{\Pi}_p}}(s, t) :=\frac{1}{n} \sum_{i=1}^{n}\widetilde{\Pi_p}(Y_i)(s)\cdot \widetilde{\Pi_p}(Y_i)(t)-\sigma^2p^{-1}\left (\sum_{k=0}^{p-1}\phi_{\operatorname{log}_2(p),k}(s)\phi_{\operatorname{log}_2(p),k}(t) \right ).$$
	\end{description}
	The strong approximation properties of $K_{\Pi_p}$ follow directly from the classical wavelet approximation result \eqref{eq:approx_func_wav}. The accuracy of the Coiflet-based quadrature employed in $K_{\widetilde{\Pi}_p}$, which relies on the vanishing moment property of the Coiflet scaling function, is established in Lemma \ref{lem:approx_scal_prod}. Finally, the reliable estimation by $\widehat{K_{\widetilde{\Pi}_p}}$ is guaranteed under a fourth-order moment assumption on $X.$ Altogether, these results support the use of the Coiflet scaling basis in the proposed estimator and ensure its minimax-optimal performance with respect to both $n$ and~$p$.
	
	Note that the second term of $\widehat{K_{\widetilde{\Pi}_p}}$ is used to debias the estimator. It comes from the fact that the diagonal observations $\left(\widehat{K_{\widetilde{\Pi}_p}}(t_j, t_j)\right)_{1\leq j\leq p}$ are used to approximate the functions. Most existing FDA methods ignore diagonal observations because (under the i.i.d. noise assumption) this is the only part of the observed covariance kernel impacted by noise. Nevertheless, as we are interested in non-asymptotic results in $p$, this introduces an additional major theoretical difficulty in having a quadrature method on a non-regular grid of $[0,1]^2$ that achieves exactly the desired $O(p^{-2\alpha})$ rate. 
	
	By denoting the debiasing term
	$$C_\sigma(s,t):=\sigma^2p^{-1}\left (\sum_{k=0}^{p-1}\phi_{\operatorname{log}_2(p),k}(s)\phi_{\operatorname{log}_2(p),k}(t) \right )$$
	we observe that the estimator can be written as:
	\begin{equation}\label{eq:Cov_kern_estimator}
		\begin{aligned}
			\widehat{K_{\widetilde{\Pi}_p}}(s, t) &=\frac{1}{n} \sum_{i=1}^{n}\widetilde{\Pi}_p(Y_i)(s)\cdot \widetilde{\Pi}_p(Y_i)(t)-C_\sigma(s,t)\\
			&=\frac{1}{np} \sum_{i=1}^{n}\sum_{k=0}^{p-1}\sum_{k'=0}^{p-1}Y_i\left(\frac{k}{p}\right)Y_i\left(\frac{k'}{p}\right)\phi_{\operatorname{log}_2(p),k}(s)\phi_{\operatorname{log}_2(p),k'}(t)-C_\sigma(s,t).\\
		\end{aligned}
	\end{equation}
	Since the  estimator $\widehat{K_{\widetilde{\Pi}_p}}$ of the covariance kernel can be decomposed on the finite basis $(\phi_{\log_2(p),k})_{(k)\in[\![0,p-1]\!]}$ we can compute  the eigenelements of its associated integral operator, denoted by $\big(\widehat{\widetilde{\lambda_{\ell,\phi}}},\widehat{\widetilde{\psi_{\ell,\phi}}}\big)$, which correspond respectively the $\ell$-th eigenvalue and eigenfunction of $\widehat{\Gamma_{\widetilde{\Pi}_p}}$, with $\widehat{\Gamma_{\widetilde{\Pi}_p}}$ being the integral operator associated to $\widehat{K}_{\widetilde{\Pi}_p}$:
	$$
	\begin{aligned}
		\widehat{\Gamma_{\widetilde{\Pi}_p}}: \mathbb{L }_2 & \mapsto \mathbb{L }_2 \\
		f(\cdot) & \mapsto  \int_{[0,1]}\widehat{K_{\widetilde{\Pi}_p}}(v,\cdot)f(v)\,dv.
	\end{aligned}
	$$ 
	See Lemma~\ref{lem:spectral_relation} in Section~\ref{sec:technicallemmas}. The next section describes the final estimator of $\psi_\ell$.
	\subsection{Minimax upper bounds for eigenfunction estimation}
	Before stating the main upper bound result, we recall that the eigenfunctions of the covariance operator are defined only up to a sign. Indeed, if $\psi_\ell$ is a unit-norm eigenfunction associated with a given eigenvalue, then its opposite, $-\psi_\ell$, is also a unit-norm eigenfunction corresponding to the same eigenvalue.  To obtain a meaningful evaluation of our estimator's performance, we follow the standard convention of measuring the error up to this sign ambiguity. This is achieved by aligning the sign of the true eigenfunction with the estimated one before computing their distance, which justifies the following definition:
	\begin{equation*}
		\psi_{\ell,\pm}=\operatorname{sign}\left(\left\langle\widehat{\widetilde{\psi_{\ell,\phi}}}, \psi_{\ell}\right\rangle\right) \times \psi_{\ell}.
	\end{equation*}
	This allows us to present the following upper bound theorem for the estimation error of the $\ell$-th eigenfunction.
	\begin{thm}\label{thm:UB}
		Let $n\in{\mathbb N}\setminus\{0\}$ and $p\in{\mathbb N}\setminus\{0\}$ such that $p=2^J$ for some $J\in\mathbb{N}\setminus\{0\}$. Denote $\eta_\ell$ the squared inverse eigengap:
		\begin{equation*}
			\eta_{1}=\left(\lambda_{1}-\lambda_{2}\right)^{-2} \text{, and for any }\ell \geq 2,\quad
			\eta_{\ell}=\max_{k\in\left\{1,-1\right\}}\frac{1}{\left(\lambda_{\ell}-\lambda_{\ell+k}\right)^2} .
		\end{equation*}
		We assume that $\eta_\ell<+\infty$,
		$$\underset{t\in[0,1]}{\sup}\mathbb{E}\left[X^4(t)\right]\leq M$$
		and that the father wavelet $\phi$  is compactly supported and satisfies for some $m\in{\mathbb N}\setminus\{0\}$
		$$\int_0^1 t^r \phi(t)\,dt = 0, \qquad
		r = 1,\ldots,m.$$
		Assume that there exist $0<\alpha\leq m+1$ and $L>0$ such that $P_X\in\mathcal P(\alpha,L)$. 
		Then there exists $C_{\sigma,M}$ that only depends on $\sigma^2$  and $M$ and $C_{\phi,\alpha}$ that depends only on $\phi$ and $\alpha$ such that 
		\begin{align*}
			\mathbb{E}\left[\left \|\widehat{\widetilde{\psi_{\ell,\phi}}}-\psi_{\ell,\pm}  \right \|^2\right]&\leq \eta_\ell\left(\frac{C_{\sigma,M}}{n}+C_{\phi,\alpha}\frac{L^2}{p^{2\alpha}}\right).
		\end{align*}
	\end{thm}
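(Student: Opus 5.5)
The bound will come from a perturbation bound for eigenfunctions combined with a bias--variance split of the Hilbert--Schmidt error of the kernel estimator. For compact self-adjoint operators with distinct eigenvalues, the classical inequality (Davis--Kahan type, e.g. Lemma 4.3 of Bosq or the version in \cite{mas}) gives
\begin{equation*}
\bigl\|\widehat{\widetilde{\psi_{\ell,\phi}}}-\psi_{\ell,\pm}\bigr\|\leq 2\sqrt{2}\,\eta_\ell^{1/2}\,\bigl\|\widehat{\Gamma_{\widetilde{\Pi}_p}}-\Gamma_{P_X}\bigr\|_{\mathcal L}.
\end{equation*}
Squaring and taking expectations then reduces the problem to proving
\begin{equation*}
\mathbb{E}\bigl\|\widehat{K_{\widetilde{\Pi}_p}}-K\bigr\|^2_{\mathbb L^2([0,1]^2)}\lesssim \frac{C_{\sigma,M}}{n}+C_{\phi,\alpha}\frac{L^2}{p^{2\alpha}},
\end{equation*}
since the operator norm is bounded by the Hilbert--Schmidt norm, which equals the $\mathbb L^2$ norm of the kernel. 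A single deterministic inequality suffices, so there is no need to work on a high-probability event where the gap is preserved; the sign alignment $\psi_{\ell,\pm}$ is exactly what the inequality requires.

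The kernel error splits as
\begin{equation*}
\widehat{K_{\widetilde{\Pi}_p}}-K=\bigl(\widehat{K_{\widetilde{\Pi}_p}}-K_{\widetilde{\Pi}_p}\bigr)+\bigl(K_{\widetilde{\Pi}_p}-K_{\Pi_p}\bigr)+\bigl(K_{\Pi_p}-K\bigr).
\end{equation*}

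\textbf{Approximation term.} $K_{\Pi_p}=(\Pi_p\otimes\Pi_p)K$ is the tensor projection of $K$. Then
\begin{equation*}
K-K_{\Pi_p}=(I-\Pi_p\otimes I)K+(\Pi_p\otimes I)(I-I\otimes\Pi_p)K.
\end{equation*}
Each piece is controlled by \eqref{eq:approx_func_wav} applied in one variable, uniformly in the other, using the $\mathcal P(\alpha,L)$ condition and symmetry of $K$. This gives $\lesssim L^2p^{-2\alpha}$.

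\textbf{Quadrature term.} $K_{\widetilde{\Pi}_p}-K_{\Pi_p}$ has coefficient matrix $p^{-1}K(k/p,k'/p)-\langle K,\phi_{J,k}\otimes\phi_{J,k'}\rangle$ in the orthonormal basis $(\phi_{J,k}\otimes\phi_{J,k'})$. The approach is to write $p^{-1}K(k/p,k'/p)-\langle K,\phi_{J,k}\otimes\phi_{J,k'}\rangle$ as a one-dimensional quadrature error in $s$ plus one in $t$. Each is handled by Lemma~\ref{lem:approx_scal_prod}: the vanishing moments of order $1,\dots,m$ kill the Taylor polynomial of degree $m$, and the remaining Hölder term gives a coefficient error $O(Lp^{-1/2-\alpha})$ per variable. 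The naive bound summing $p^2$ squared coefficients of size $L^2p^{-2-2\alpha}$ gives exactly $L^2p^{-2\alpha}$. The condition $\alpha\le m+1$ is where the moment order matters.

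The main obstacle is smoothness of $K$ across the diagonal. For $\alpha>1$ the class only controls $\partial_s^mK$ in $s$ for each fixed $t$, which is exactly what the one-variable quadrature needs. So I would take care to always integrate one variable at a time, applying the Hölder condition in $s$ (and in $t$ by symmetry), and never needing joint smoothness.

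\textbf{Stochastic term.} Write $Y_i=X_i+\varepsilon_i$ on the grid. Then $\widehat{K_{\widetilde{\Pi}_p}}-K_{\widetilde{\Pi}_p}$ is an empirical average of centered terms:
\begin{itemize}
\item $\widetilde{\Pi}_p X\otimes\widetilde{\Pi}_p X$ minus its mean;
\item cross terms $\widetilde{\Pi}_pX\otimes\widetilde{\Pi}_p\varepsilon$;
\item $\widetilde{\Pi}_p\varepsilon\otimes\widetilde{\Pi}_p\varepsilon-C_\sigma$, which is centered precisely because of the debiasing term.
\end{itemize}
By orthonormality, $\|\widetilde{\Pi}_pf\|^2=p^{-1}\sum_k f(k/p)^2$. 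The expected squared Hilbert--Schmidt norm is therefore $n^{-1}$ times variances bounded as follows:
\begin{itemize}
\item $\mathbb E(p^{-1}\sum_kX(k/p)^2)^2\le \sup_t\mathbb E X^4(t)\le M$;
\item $\sigma^2\,\mathbb E\,p^{-1}\sum_k X(k/p)^2\le\sigma^2\sqrt M$ for the cross terms;
\item $\mathbb E\|\widetilde{\Pi}_p\varepsilon\|^4$ minus the centering, which needs care.
\end{itemize}
For the last item, $\operatorname{Var}$ of the Hilbert--Schmidt norm of $p^{-1}\varepsilon\varepsilon^\top$ gives $p^{-2}\sum_{k,k'}\operatorname{Var}(\varepsilon_k\varepsilon_{k'})\le 3\sigma^4$, which is independent of $p$. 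Collecting the three terms gives $C_{\sigma,M}/n$, and combining with the perturbation inequality yields the claim.
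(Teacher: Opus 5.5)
Your proposal is correct and follows essentially the same route as the paper. Both arguments combine Bosq's inequality with the three-way split into stochastic, quadrature and approximation errors, each bounded in Hilbert--Schmidt norm, and both rely on the efficient quadrature lemma for the middle term. The remaining differences are minor. You bound the quadrature term by Parseval on the coefficient matrix, whereas the paper uses a pointwise bound exploiting the compact support of $\phi$. You derive the approximation term by a tensor-product argument, whereas the paper cites Lemma~4 of \cite{MR3620733}. You split the stochastic term into $XX$, $X\varepsilon$ and $\varepsilon\varepsilon$ parts, whereas the paper bounds $\operatorname{Var}\bigl(Y_1(k/p)Y_1(k'/p)\bigr)$ directly by fourth moments of $Y_1$.
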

	Once again, we observe a phase transition when $n$ is of the same order as $p^{2\alpha}$. This relationship highlights which component dominates the estimation error, depending on the interplay between the intrinsic regularity of the observations, the sample size and the number of sampling points. The error decomposes into two distinct contributions: a purely statistical component and a purely approximation component. In particular, the noise level does not affect the constant $C_{\phi,\alpha}.$
	
	To establish this result, the total error is decomposed into three terms -- estimation error, quadrature error, and approximation error -- reflecting the three-step construction of the estimator described previously. The crucial ingredient is the use of a basis with sufficiently many vanishing moments, which ensures that the quadrature error matches the order of the approximation error and yields the optimal decay rate $p^{-2\alpha}.$ Coiflets bases satisfy these conditions.
	
	Another noteworthy aspect is that, although the estimator relies on projection onto a basis -- typically suggesting the presence of a tuning parameter -- no smoothing parameter is required in the present common-grid setting. Remarkably, interpolation via Coiflets alone suffices to achieve minimax optimality with respect to both $n$ and $p$. The only essential practical requirement is the choice of a wavelet basis with enough vanishing moments, namely at least as many as the order of the kernel derivatives.
	\section{Examples of processes belonging to the regularity class}\label{sec:illustration}
	The class $\mathcal{P}(\alpha,L,\delta_\ell)$ introduced in \eqref{eq:constrained_class} captures two complementary aspects that govern the statistical difficulty of FPCA for discretely observed noisy curves: the functional regularity of the covariance kernel, characterized by the pair $(\alpha,L)$ and the spectral conditioning of the $\ell$-th eigendirection, expressed through $r_\ell$ and the constraint $r_\ell \le \delta_\ell$.  In the sequel, we provide some examples of processes belonging to this regularity class. We first consider general processes based on polynomial or exponential decay rates for eigenvalues illustrated by some classical processes. Then, we present a construction based on the Karhunen-Loève decomposition.
	\subsection{Processes based on polynomial or exponential decay rates for eigenvalues}
	Most existing theoretical works on functional data analysis rely on assumptions about the decay rate on the eigenvalues to capture the regularity of functional data. Usually, it is commonly assumed (see for instance \cite{Hall_Horowitz_2007, zhou2024theoryFPCA}) that these eigenvalues decrease either at a polynomial or at an exponential rate. We show in the first point of Lemma~\ref{lem:section5.1_pol}  and in Lemma~\ref{lem:section5.1_exp} that these regimes allow to bound the quantity $r_\ell$ by some suitable values $\delta_\ell$. Combining this result with regularity conditions on $K$, we then obtain processes belonging to the class $\mathcal{P}(\alpha,L,\delta_\ell)$ (points 2, 3 and 4 of Lemma~\ref{lem:section5.1_pol}). To the best of our knowledge, however, no clear mathematical result elucidates the relationship between the decreasing rate of the eigenvalues and the regularity of the kernel.  In subsequent Theorem~\ref{thm:regu_kern}, we address this gap by proving that, under suitable regularity assumptions on the eigenfunctions, the decay rate of the eigenvalues characterizes the smoothness of the kernel.
	\subsubsection{Polynomial decay rates for eigenvalues}
	We first consider polynomial decay rates for eigenvalues.
	Assume there exists an integer $J>0$ and two constants $c_1,c_2>0$ such that 
	\begin{equation}\label{def:pol_regime}
		\lambda_j(\Gamma_{P_X})\leq c_1j^{-(\gamma+1)} \mbox{ and }  \lambda_j(\Gamma_{P_X})-\lambda_{j+1}(\Gamma_{P_X})\geq c_2j^{-(\gamma+2)}, \qquad j\geq J.
	\end{equation}
	This condition corresponds to Assumption 3 of \cite{zhou2024theoryFPCA}.
	Following classical stochastic processes fall into our general framework as proven in Lemma~\ref{lem:section5.1_pol} below. 
	\begin{enumerate}
		\item \textbf{Standard Brownian motion.}
		Let $X$ be the standard Brownian motion on $[0,1].$ Its covariance kernel is given by $K_{P_X}(s,t)=\min(s,t)$ so that $P_X\in\mathcal{P}\left(1,1\right)$. Its eigenvalues are 
		$$\lambda_j(\Gamma_{P_X})=\pi^{-2}\big(j-\frac12\big)^{-2},\qquad j\ge1.$$
		\item \textbf{Brownian bridge on \([0,1]\).}
		Let $X$ be the standard Brownian bridge on $[0,1]$. Its covariance kernel is given by $K_{P_X}(s,t)=\min(s,t)-st$ so that $P_X\in\mathcal{P}\left(1,1\right)$. Its eigenvalues are  
		$$\lambda_j\left(\Gamma_{P_X}\right)=\pi^{-2} j^{-2},\qquad j\ge 1.$$
		\item \textbf{\(k\)-fold integrated Brownian motion.} 
		Let $X$ be the $k$-fold temporally integrated standard Brownian motion on $[0,1]$ (for $k\ge 1$) defined in \cite{Gao2003_IB} as :
		$$X(t)=\int_0^t \int_0^{x_k} \cdots \int_0^{x_2} B\left(x_1\right) d x_1 d x_2 \cdots d x_m, $$
		with $B$ the standard brownian motion on [0,1]. Its covariance kernel is 
		$$K_{P_X}(s,t)=\int_0^{\min(s,t)}\frac{(s-u)^k (t-u)^k}{(k!)^2}\,du.$$
		\item \textbf{Fractional Brownian motion.}
		Let $X$ be the fractional Brownian motion on $[0,1]$ with Hurst coefficient $H\in(0,1)$ (see \cite{Bronski2003_fBm}). 
		The covariance kernel is
		$$K_{P_X}(s,t)=\frac{1}{2}\big(s^{2H}+t^{2H}-|s-t|^{2H}\big).$$
	\end{enumerate}
	The next lemma establishes that these processes are in fact special instances of our general framework. The key point in each setting is to control the quantity $r_\ell$ so that $\delta_\ell>0$ can be chosen such that the considered process belongs to $\mathcal{P}(\alpha, L, \delta_\ell)$.
	\begin{lem}\label{lem:section5.1_pol}
		We have the following results.
		\begin{enumerate}
			\item Assume that the sequence of eigenvalues  $(\lambda_j(\Gamma_{P_X}))_{j\geq 1}$ satisfies the conditions of Equation~\eqref{def:pol_regime} then there exists a constant $c>0$ such that 
			$$
			r_\ell(\Gamma_{P_X})\leq c\ell^2, \qquad\ell\geq 1.
			$$
			\item The standard Brownian motion and the standard Brownian bridge satisfy Equation~\eqref{def:pol_regime} with $\gamma=1$ and their associated distribution $P_X$ belongs to $\mathcal{P}(1,1,C\ell^2)$. 
			\item The $k$-fold integrated Brownian motion satisfies Equation~\eqref{def:pol_regime} with $\gamma=2k+2$ and its associated distribution belongs to $\mathcal{P}(2k+1,L_k,C_k\ell^2)$ for two constants $L_k$ and $C_k$. 
			\item The fractional Brownian motion satisfies Equation~\eqref{def:pol_regime} with $\gamma=2H$ and its associated distribution belongs to $\mathcal{P}(2H,L_H,C_H\ell^2)$ for two constants $L_H$ and $C_H$. 
		\end{enumerate}
	\end{lem}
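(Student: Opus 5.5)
Point 1 is a direct computation from \eqref{def:pol_regime}. For $\ell\ge J$ and $k=1$, the upper bound on the eigenvalues and the lower bound on the gap give
\[
\frac{\lambda_\ell\lambda_{\ell+1}}{(\lambda_\ell-\lambda_{\ell+1})^2}\le \frac{c_1^2\,\ell^{-2(\gamma+1)}}{c_2^2\,\ell^{-2(\gamma+2)}}=\frac{c_1^2}{c_2^2}\,\ell^2 .
\]
For $k=-1$ and $\ell\ge J+1$, the same bound holds with $\ell$ replaced by $\ell-1$ in the gap, using $\lambda_\ell\le\lambda_{\ell-1}\le c_1(\ell-1)^{-(\gamma+1)}$. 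This gives at most $(c_1/c_2)^2\,\ell^2\,(\ell/(\ell-1))^{2(\gamma+2)}\le C\ell^2$, since $\ell/(\ell-1)\le 2$. The finitely many indices $\ell\le J$ each have a finite $r_\ell$, because the eigenvalues are distinct and positive. We absorb them by enlarging $c$ to $\max_{\ell\le J} r_\ell/\ell^2$.

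For points 2--4, in each case we must (a) check \eqref{def:pol_regime} from known eigenvalue asymptotics, (b) apply point 1, and (c) verify the H\"older condition in \eqref{PalphaL}.

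\textbf{Brownian motion and bridge.} The eigenvalues are explicit. Taking $\gamma=1$, we have $\lambda_j\le c j^{-2}$, and the gap $\lambda_j-\lambda_{j+1}$ is of order $j^{-3}$ by the mean value theorem applied to $x\mapsto x^{-2}$. For the kernel, with $m=0$ and $\beta=1$, the function $s\mapsto\min(s,t)$ is $1$-Lipschitz. For the bridge, $\min(s,t)-st$ has $s$-derivative $1_{s<t}-t\in[-t,1-t]$, so it is also $1$-Lipschitz.

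\textbf{$k$-fold integrated Brownian motion.} We use the asymptotics of \cite{Gao2003_IB}, namely $\lambda_j=(\pi(j-\tfrac12))^{-(2k+2)}+O(j^{-(2k+3)})$. This yields the bound on $\lambda_j$ at once. The gap is harder: a naive $O(j^{-(2k+3)})$ remainder is of the same order as the main gap $\asymp j^{-(2k+3)}$. We therefore need the refined expansion from that reference, with remainder $o(j^{-(2k+3)})$ (in fact exponentially small corrections to $\lambda_j^{-1/(2k+2)}$), which gives the gap lower bound.

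For the kernel regularity we take $m=2k$ and $\beta=1$. Differentiating $K(s,t)=\int_0^{\min(s,t)}(s-u)^k(t-u)^k\,du/(k!)^2$ in $s$, the derivatives up to order $k$ are smooth, and a kink appears at $s=t$ only when the integrand's boundary term is nonzero. Indeed $\partial_s^{2k+1}K$ is piecewise polynomial and bounded, so $\partial_s^{2k}K$ is Lipschitz with a constant $L_k$. Checking that the $(2k)$-th derivative exists across the diagonal, i.e.\ that the lower-order boundary terms vanish there, is a routine computation using the $(s-u)^k$ factors.

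\textbf{Fractional Brownian motion.} Here we use the eigenvalue asymptotics of \cite{Bronski2003_fBm}, $\lambda_j=\nu_H\,j^{-(2H+1)}+o(j^{-(2H+2)})$ with $\gamma=2H$. The $o$-remainder is what makes the gap bound hold. For the kernel, with $m=0$ and $\beta=2H$ when $2H\le1$, we have $|s^{2H}-s'^{2H}|\le|s-s'|^{2H}$ and $\bigl||s-t|^{2H}-|s'-t|^{2H}\bigr|\le|s-s'|^{2H}$, so $L_H=1$. When $2H>1$, we write $2H=1+\beta$, differentiate once, and apply the same subadditivity to the derivative.

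\textbf{Main obstacle.} The main difficulty is the gap lower bound in points 3 and 4. It requires second-order eigenvalue asymptotics, with remainder $o(j^{-(\gamma+2)})$, rather than just the leading term. My plan is to quote these refined expansions from \cite{Gao2003_IB} and \cite{Bronski2003_fBm}. If only first-order asymptotics are available there, I would instead obtain the gap from the characteristic equation of the associated boundary value problem.
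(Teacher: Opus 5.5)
\textbf{Gap: the eigengap lower bound for fractional Brownian motion (point 4).} The expansion you attribute to \cite{Bronski2003_fBm}, $\lambda_j=\nu_H j^{-(2H+1)}+o(j^{-(2H+2)})$, is not what that paper proves. In fact no expansion of this form can hold. Already for $H=\frac12$ one has $\pi^{-2}(j-\frac12)^{-2}-\pi^{-2}j^{-2}\sim\pi^{-2}j^{-3}$: the phase shift in the eigenvalues produces a correction of exactly the order $j^{-(2H+2)}$ of the gap you want to bound.

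What Bronski's Theorem 1 actually gives (as recalled in the paper's proof) is a remainder
$o\bigl(j^{-\frac{(2H+2)(4H+3)}{4H+5}+\delta}\bigr)=o\bigl(j^{-(2H+2)+\frac{4H+4}{4H+5}+\delta}\bigr)$.
This dominates $j^{-(2H+2)}$, so it says nothing about $\lambda_j-\lambda_{j+1}$. Your fallback is not available either. For $H\neq\frac12$ the fBm covariance operator is not the Green operator of a differential boundary value problem, so there is no characteristic equation to exploit.

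You need a genuinely second-order spectral result. One such result is due to Chigansky and Kleptsyna (Stoch.\ Process.\ Appl., 2018): they show $\lambda_n=\sin(\pi H)\Gamma(2H+1)\,\nu_n^{-(2H+1)}$ with $\nu_n=\pi n+c_H+O(n^{-1})$. Then $\nu_{n+1}-\nu_n=\pi+O(n^{-1})$, and the mean value theorem gives $\lambda_n-\lambda_{n+1}\asymp n^{-(2H+2)}$. The paper's own treatment of point 4 (``handled similarly'' via Bronski's Theorem 1) has exactly the same defect, so deferring to it does not close the gap.

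\textbf{The rest matches the paper.} You use the same two-sided bound on $r_\ell$ in point 1; absorbing the indices $\ell\le J$ into $c$ is what the paper means by taking $c$ large. You use explicit spectra for the Brownian motion and bridge. For the integrated Brownian motion you use the expansion of \cite{Gao2003_IB} around shifted frequencies with exponentially small remainder, which is indeed sharp enough for the gap.

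You go further than the paper by checking the H\"older conditions on the kernels, which it omits. These checks are correct; for example $\partial_s^{k+1}K(s,t)=(t-s)_+^k/k!$, so $\partial_s^{2k}K$ is $1$-Lipschitz.

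Finally, $\lambda_j\asymp j^{-(2k+2)}$ yields \eqref{def:pol_regime} with $\gamma=2k+1$, not $\gamma=2k+2$ as written in the statement. Both your argument and the paper's prove the former.
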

	\subsubsection{Exponential decay rates for eigenvalues}
	We now consider exponential decay rates for eigenvalues. Assume there exists an integer $J>0$ and two constants $c_1,c_2>0$ such that 
	\begin{equation}\label{def:exp_regime}
		\lambda_j(\Gamma_{P_X})\leq c_1\exp(-j\gamma) \mbox{ and }  \lambda_j(\Gamma_{P_X})-\lambda_{j+1}(\Gamma_{P_X})\geq c_2\exp(-j\gamma), \qquad j\geq J.
	\end{equation}
	We then obtain the following result.
	\begin{lem}\label{lem:section5.1_exp}
		Assume that the eigenvalues sequence $(\lambda_j(\Gamma_{P_X}))_{j\geq 1}$ satisfies the conditions of Equation~\eqref{def:exp_regime}. Then, there exists a constant $c>0$ such that 
		$$
		r_\ell(\Gamma_{P_X})\leq c, \qquad \ell\geq 1. 
		$$
	\end{lem}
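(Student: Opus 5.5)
The plan is to bound each of the two ratios in the definition \eqref{eq:relative_eigengap} of $r_\ell$ separately, using only \eqref{def:exp_regime}, and to handle the finitely many small indices by a separate compactness-free argument. Write $\lambda_j$ for $\lambda_j(\Gamma_{P_X})$. Since the eigenvalues are assumed distinct and, as they are listed in decreasing order, the gaps $\lambda_j-\lambda_{j+1}$ are strictly positive, we may assume all $\lambda_j>0$ up to the relevant index. (If some $\lambda_j=0$, the numerator of the ratio vanishes, so that term causes no difficulty.)

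\textbf{Step 1: the term $k=+1$ for $\ell\geq J$.} Using $\lambda_{\ell+1}\leq\lambda_\ell\leq c_1 e^{-\ell\gamma}$ together with $\lambda_\ell-\lambda_{\ell+1}\geq c_2 e^{-\ell\gamma}$, we obtain
\begin{equation*}
\frac{\lambda_\ell\lambda_{\ell+1}}{(\lambda_\ell-\lambda_{\ell+1})^2}\leq \frac{c_1^2e^{-2\ell\gamma}}{c_2^2e^{-2\ell\gamma}}=\frac{c_1^2}{c_2^2}.
\end{equation*}

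\textbf{Step 2: the term $k=-1$ for $\ell\geq J+1$.} Here we apply \eqref{def:exp_regime} at index $\ell-1\geq J$. This gives $\lambda_{\ell-1}-\lambda_\ell\geq c_2e^{-(\ell-1)\gamma}$, as well as $\lambda_\ell\leq\lambda_{\ell-1}\leq c_1e^{-(\ell-1)\gamma}$. The same computation then yields the bound $c_1^2/c_2^2$. The point worth checking is that the upper bound on $\lambda_\ell$ must be taken at index $\ell-1$ and not at $\ell$, so that the exponents cancel exactly. In fact, using the sharper bound $\lambda_\ell\leq c_1e^{-\ell\gamma}$ would only improve the estimate.

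\textbf{Step 3: the finitely many indices $\ell\leq J$.} For each such $\ell$, $r_\ell$ is a finite number, because the gaps are strictly positive by distinctness of the eigenvalues. We therefore set
\begin{equation*}
c=\max\Big(c_1^2/c_2^2,\ \max_{1\leq\ell\leq J} r_\ell(\Gamma_{P_X})\Big).
\end{equation*}
The case $\ell=1$ involves only the $k=+1$ term, which is covered either by Step 1 or by this step.

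The only mild obstacle is that $c$ depends on the process through these first $J$ values, and this dependence is permitted since the lemma asserts existence of a constant $c$ for the given $P_X$. Beyond that, the argument is a direct computation with no hard step.
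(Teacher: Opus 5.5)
Your proof is correct and follows essentially the same route as the paper. You bound the $k=+1$ ratio for $\ell\ge J$ directly from the exponential-decay assumption, obtain the $k=-1$ ratio for $\ell\ge J+1$ by applying the same bound at index $\ell-1$, and absorb the finitely many remaining indices into the constant. The only cosmetic difference is that the paper uses $\lambda_{\ell+1}\le c_1e^{-(\ell+1)\gamma}$, which gives the slightly sharper constant $c_1^2e^{-\gamma}/c_2^2$ instead of your $c_1^2/c_2^2$.
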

	\subsection{Processes based on the Karhunen-Loève decomposition}\label{subsec:useful_constr}
	This section presents a construction of processes belonging to the regularity class $\mathcal P(\alpha,L,\delta_\ell)$ by using the Karhunen-Loève decomposition. We first state a general result relating the Karhunen-Loève decomposition of a Gaussian process 
	\begin{equation}\label{eq:KKL}
		X(t) := \sum_{j=1}^{\infty} \sqrt{\lambda_j} \, \xi_j \, \psi_j(t),
	\end{equation}
	to the regularity of its covariance kernel. We recall that in the Gaussian case $(\xi_j)_{j=1}^\infty\overset{i.i.d.}{\sim}\mathcal{N}(0,1)$, which will be assumed in the sequel. We have:
	$$
	K_{P_X}(s,t) = \sum_{j=1}^{+\infty}\lambda_j\psi_j(s)\psi_j(t),\qquad (s,t)\in [0,1]^2.
	$$
	
	We first introduce the following regularity class for $K\equiv K_{P_X}$. Let $m \in \mathbb{N}$ and $\beta \in (0,1]$. We denote by $C^{m,\beta}([0,1]^2)$ the space of bivariate functions that are $m$-times differentiable with $\beta$-Hölder continuous derivatives of order $m$. That is, $K \in C^{m,\beta}([0,1]^2)$ if $K \in C^m([0,1]^2)$ and if there exists $L \in \mathbb{R}_+^*$ such that, for all multi-indices $\mathbf{v}$ with $|\mathbf{v}| = m$,
	$$|\partial^{\mathbf{v}}K(s,t) - \partial^{\mathbf{v}}K(s',t')|
	\leq L \, \|(s,t) - (s',t')\|^\beta.$$
	\begin{thm}\label{thm:regu_kern}
		Assume that $(\psi_j)_{j\geq 1}$ is an orthonormal basis of $\mathbb L^2([0,1])$ and $(\lambda_j)_{j\geq 1}$ is a summable sequence of positive numbers such that
		\begin{equation}\label{eq:derivative_bound}
			\begin{array}{ll}
				\exists\, \varsigma \in \mathbb{R}_+, \forall\, \ell \in \mathbb{N}, \exists\, M_\ell \in \mathbb{R}_+^*, \forall\, u\in [0,1],& |\partial^\ell \psi_j(u)| \leq M_\ell j^{\ell+ \varsigma}, \\
				\exists c,\gamma >0, \forall j\geq 1, & \lambda_j \leq cj^{-(\gamma+1)}. 
			\end{array}
		\end{equation}
		Let $\alpha := \gamma - 2\varsigma > 0$, $m:=\max\left\{k\in\mathbb{Z}, k<\alpha\right\}$, $\beta:=\alpha-m$ and 
		$$
		K(s,t) = \sum_{j=1}^{+\infty}\lambda_j\psi_j(s)\psi_j(t).
		$$
		\begin{itemize}
			\item If $\alpha$ is not an integer (so $\beta \in (0,1)$), then $K \in C^{m,\beta}([0,1]^2)$.
			\item If $\alpha$ is an integer (so $\beta = 1$), then $K \in C^{m,\delta}([0,1]^2)$ for all $\delta \in (0,1)$.
		\end{itemize}
	\end{thm}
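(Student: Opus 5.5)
We prove Theorem~\ref{thm:regu_kern} by termwise differentiation of the series defining $K$, followed by a dyadic-type splitting of the series to get the H\"older estimate on the top-order derivatives.

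\textbf{Step 1: $K\in C^m([0,1]^2)$.} For a multi-index $\mathbf v=(a,b)$ with $a+b\le m$, the differentiated term is
$$\lambda_j\,\partial^a\psi_j(s)\,\partial^b\psi_j(t).$$
By \eqref{eq:derivative_bound} it is bounded uniformly in $(s,t)$ by
$$c M_a M_b\, j^{-(\gamma+1)+a+b+2\varsigma}\le C j^{-1-(\alpha-m)} .$$
Since $\alpha>m$, this is summable. The Weierstrass M-test then gives uniform convergence of all differentiated series of order $\le m$. The standard theorem on termwise differentiation of uniformly convergent series (applied inductively in each variable) yields $K\in C^m$ and
$$\partial^{\mathbf v}K(s,t)=\sum_j \lambda_j\partial^a\psi_j(s)\partial^b\psi_j(t).$$

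\textbf{Step 2: the H\"older estimate for $|\mathbf v|=m$.} Fix $(s,t),(s',t')$ and set $h=\|(s,t)-(s',t')\|\le\sqrt2$. Write the increment of each term as
$$\lambda_j\bigl[\partial^a\psi_j(s)-\partial^a\psi_j(s')\bigr]\partial^b\psi_j(t)+\lambda_j\,\partial^a\psi_j(s')\bigl[\partial^b\psi_j(t)-\partial^b\psi_j(t')\bigr].$$
Each bracket is bounded in two ways:
\begin{itemize}
\item trivially, by $2M_a j^{a+\varsigma}$ (respectively $2M_b j^{b+\varsigma}$);
\item by the mean value theorem and the bound on the $(a+1)$-th derivative, by $M_{a+1} j^{a+1+\varsigma}h$.
\end{itemize}
So the $j$-th term is bounded by
$$C j^{-(\gamma+1)+m+2\varsigma}\min(1,jh)=C j^{-1-\beta}\min(1,jh).$$
Split the sum at $N=\lfloor 1/h\rfloor$. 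The tail $j>N$ contributes $\sum_{j>N}j^{-1-\beta}\lesssim N^{-\beta}\lesssim h^{\beta}$. The head contributes $h\sum_{j\le N}j^{-\beta}$.

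\textbf{The main obstacle is the head sum}, which behaves differently according to whether $\alpha$ is an integer.
\begin{itemize}
\item If $\beta<1$, then $\sum_{j\le N}j^{-\beta}\lesssim N^{1-\beta}$, so the head is $\lesssim h\cdot h^{\beta-1}=h^\beta$. Together with the tail this gives $K\in C^{m,\beta}$.
\item If $\beta=1$, the head sum is harmonic and gives only $h\log(1/h)$, so Lipschitz continuity fails in general. Since $h\log(1/h)\le C_\delta h^\delta$ for every $\delta\in(0,1)$ and $h\le\sqrt2$, we obtain $K\in C^{m,\delta}$ for all $\delta<1$. Alternatively, one can directly use $\min(1,jh)\le (jh)^\delta$, which makes $\sum_j j^{-2+\delta}$ summable.
\end{itemize}
Note that in the non-integer case the interpolation trick $\min(1,x)\le x^{\beta}$ would leave $\sum_j j^{-1}$, which diverges. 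This is why the explicit split at $N\approx 1/h$ is needed there.

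Finally, note that $m$ is defined as the largest integer strictly below $\alpha$, so in the integer case $m=\alpha-1$ and $\beta=1$, consistent with the statement. The bounds use $M_\ell$ only for $\ell\le m+1$, which is finitely many constants. The resulting H\"older constant $L$ depends only on $c,\gamma,\varsigma$ and $M_0,\dots,M_{m+1}$.
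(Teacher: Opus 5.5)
Your proof is correct and follows essentially the same route as the paper. Both arguments use termwise differentiation with a uniform summability bound to get $K\in C^m$, then the per-term bound $Cj^{-1-\beta}\min(1,jh)$ split at $N=\lfloor 1/h\rfloor$, with the head sum separating $\beta<1$ from the harmonic case $\beta=1$. Your telescoping of the joint increment in $(s,t)$ for a general multi-index $(a,b)$ is slightly more complete than the paper, which only treats the $s$-increment of $\partial_s^m K$ and asserts that the mixed case is analogous. One small fix: for $h>1$ you have $N=0$, so bound the tail trivially there.
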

	\begin{rem}
		In the last case, under our assumptions, we do not necessarily have $K \in C^{m,1}([0,1]^2)$. However, this result can be strengthened under more stringent conditions on the basis $(\psi_j)_{j \geq 1}$. Specifically, if, for some integer $\alpha,$ the partial sums of the series $\sum_j \partial^{\operatorname{v}}(\psi_j(s)\psi_j(t))$ are uniformly bounded for all multi-indices $|\operatorname{v}| = m+1$, then the conclusion improves to $K \in C^{m,1}([0,1]^2).$ This condition is satisfied for highly structured bases exhibiting cancellation effects, such as the trigonometric basis associated with Brownian motion.
	\end{rem}
	\begin{rem}
		In the case where $(\psi_j)_{j\geq 1}$ is the Fourier basis on $[0,1]$, namely 
		$$\psi_1(t) = 1,\quad \psi_{2k}(t) = \sqrt{2} \cos(2\pi k t),\quad \psi_{2k+1}(t) = \sqrt{2} \sin(2\pi k t),\quad k \geq 1 ,$$
		Condition~\eqref{eq:derivative_bound} is verified with $\varsigma = 0$ and $M_\ell = \sqrt{2}(2\pi)^\ell$.\\
		If, moreover, $(\lambda_j)_{j\geq 1}$ is a summable sequence of positive numbers such that there exists two constants $c>0$ and $\gamma>0$ such that $\lambda_j \leq cj^{-(\gamma+1)}, j\geq 1$, all the assumptions of Theorem~\ref{thm:regu_kern} are fulfilled for $\alpha=\gamma$. This implies that the distribution $P_X$ of any process $X$ with kernel $$
		K_{P_X}(s,t) = \sum_{j\geq 1}\lambda_j\psi_j(s)\psi_j(t), 
		$$
		belongs to $\mathcal P(\alpha,L,\delta_\ell)$, for a sufficiently large $L$ and $\delta_\ell\geq r_\ell(\Gamma_{P_X})$.
	\end{rem}
	This set of results therefore provides the rigorous bridge between the  functional regularity class $\mathcal{P}(\alpha,L,\delta_\ell)$ employed in our minimax analysis and the parametric spectral models that will be used for our simulations. In practice it allows us to choose spectral exponents $\gamma$ for which the induced kernel regularity $\alpha$ matches the theoretical regimes studied in Sections \ref{sec:lbounds}-\ref{sec:ub}, while at the same time knowing exactly the eigenelements of the associated processes. This point is crucial for the next section which is devoted to simulations.
	\section{Simulations}\label{sec:simu}
	We propose a simulation protocol to illustrate the theoretical results in a setting where we control 
	\begin{itemize}
		\item[(i)] the regularity of the covariance kernel to ensure that we are dealing with distributions belonging to the appropriate class of random processes,
		\item[(ii)] the eigenelements of the autocovariance operator, to be able to evaluate the error.
	\end{itemize}
	Two simulation frameworks are considered. The first one, called \textit{the minimax aliasing model}, is an adversarial construction specifically designed to saturate the discretisation term $p^{-2\alpha}$ predicted by the minimax theory; it constitutes our main scenario. The second setting is \textit{the Fourier model} already introduced in Subsection~\ref{subsec:useful_constr}, which serves as a complementary, more favorable scenario. Following Model~\eqref{eq:design}, in both cases, we observe
	\begin{equation}\label{eq:sample}
		Y_i(t_j)=X_i(t_j)+\varepsilon_{i,j},
		\qquad
		t_j=\frac{j-1}{p},\quad j=1,\dots,p,
	\end{equation}
	with $\varepsilon_{i,j}\overset{i.i.d.}{\sim}\mathcal{N}(0,\sigma^2)$ and $\sigma^2 = 0.1$. Both models differ only in the choice of the signal process~$X$. 
	\subsection{Fourier model}\label{subsec:fourier_model}
	To construct a stochastic process for which we know the regularity of the covariance kernel and its eigenelements explicitly, we rely on the construction developed in Subsection~\ref{subsec:useful_constr}. We simulate the signal such that:
	$$ X^{(\alpha)}\left ( \cdot \right )=\sum_{k=1}^{1001} \sqrt{\lambda_{k}^{*,(\alpha)}} \xi_k \psi_{k}^*\left(\cdot\right),$$
	with $(\xi_k)_{k=1}^{1001}\overset{i.i.d.}{\sim}\mathcal{N}(0,1)$. We set $\lambda_{k}^{*,(\alpha)} =c_{\alpha} k^{-(1+\alpha)}$, $\psi_{1}^*(t)= 1, \psi_{2k}^*(t) = \sqrt{2} \cos(2 \pi k t)$, $\psi_{2k+1}^*(t) = \sqrt{2} \sin(2 \pi k t)$, for $k\in\mathbb{N}\setminus\left\{0\right\}$. To study the effect of the regularity of the process (and the associated eigengap), we fix the total energy of the signal and consider a noise to signal ratio of $10$\%, thus we ensure that:
	$\sum_{k=1}^{1000}\lambda_{k}^{*,(\alpha)}=1$, for all $\alpha$. This model is not adversarial, and in practice the decay of the error in~$p$ turns out to be much faster than the minimax rate~$p^{-2\alpha}$, illustrating that one can generally expect more favorable rates than the worst case.
	\subsection{Minimax aliasing model}\label{subsec:minimax_model}
	Unlike the Fourier model, the minimax aliasing scenario does not rely on a single fixed process observed on increasingly fine grids. Instead, for each grid size~$p$ and regularity parameter~$\alpha$, a different process $X^{(p,\alpha)}$ is constructed, so that the distribution of the process depends explicitly on~$p$. This is consistent with the minimax interpretation: the model plays the role of a least-favourable element. The signal is
	\[
	X_i^{(p,\alpha)}(t)
	=
	\sqrt{\lambda_1^{*,(p,\alpha)}}\,\xi_{i1}\,\psi_1^{*,(p,\alpha)}(t)
	+
	\sqrt{\lambda_2^{*,(p,\alpha)}}\,\xi_{i2}\,\psi_2^{*,(p,\alpha)}(t),
	\qquad
	(\xi_{i1},\xi_{i2})\overset{i.i.d.}{\sim}\mathcal N(0,I_2).
	\]
	The first eigenelement is fixed:
	$\psi_1^{*,(p,\alpha)}(t)=\sqrt{2}\sin(2\pi t)$ and
	$\lambda_1^{*,(p,\alpha)}=0.60$.  The second eigenelement is defined via
	\[
	r_{p,\alpha}:=p^{-2\alpha},
	\qquad
	\psi_2^{*,(p,\alpha)}(t)
	=
	(1-r_{p,\alpha})
	+
	\sqrt{1-(1-r_{p,\alpha})^2}\;\sqrt{2}\sin(2\pi p\, t),
	\]
	and
	$\lambda_2^{*,(p,\alpha)}=(1-r_{p,\alpha})^{-2}\cdot 0.25$.
	
	\paragraph{Aliasing mechanism.}
	The key property is that the oscillatory part of $\psi_2^{*,(p,\alpha)}$ vanishes exactly on the observation grid, since
	$\sin(2\pi p\, t_{j})=\sin(2\pi(j-1))=0$.
	Hence, on the grid, the second eigenfunction is seen as a constant profile equal to $1-r_{p,\alpha}$, and the choice of $\lambda_2^{*,(p,\alpha)}$ ensures that the discrete contribution of this second mode to the covariance is always $0.25\,\mathbf{1}\mathbf{1}^\top$, regardless of~$\alpha$. Thus, $\alpha$ only controls the amount of variations in eigenfunctions that can not be observed due to the discrete design as:
	\[
	\|\psi_2^{*,(p,\alpha)}-1\|_{L^2([0,1])}^2 = 2\,p^{-2\alpha}.
	\]

	This scenario should therefore be read as an adversarial test of minimax type: it illustrates the intrinsic difficulty of off-grid reconstruction predicted by the theory. In the code, estimation is carried out by focusing on the second eigenelement.
	\subsection{Error assessment and estimation procedure}
	
	\textbf{Error assessment.} In all simulations, we focus on the estimation of the second eigenelement. For the second eigenfunction, we use the same sign convention as in the theoretical part and compare the estimator to $\psi_{2,\pm}^*$. For a given Monte Carlo replicate, the $\mathbb{L}_2([0,1])$-error is evaluated numerically on the fine grid $\{j/4096:0\leq j\leq 4095\}$ as
	\[
	\left \|\widehat{\widetilde{\psi_{2,\phi}}}-\psi_{2,\pm}^* \right \|^2
	\approx
	\frac{1}{4096}\sum_{j=0}^{4095}
	\left(
	\widehat{\widetilde{\psi_{2,\phi}}}\left(\frac{j}{4096}\right)-\psi_{2,\pm}^*\left(\frac{j}{4096}\right)
	\right)^2.
	\]
	We report the \emph{population squared error} and the \emph{mean squared error (MSE)}, defined by
	\[
	\mathrm{Population}(p,\alpha):=
	\left \| \widetilde{\psi_{2,\phi}}-\psi_{2,\pm}^* \right \|^2,
	\qquad
	\mathrm{MSE}(n,p,\alpha):=
	\mathbb{E}\left[
	\left \|\widehat{\widetilde{\psi_{2,\phi}}}-\psi_{2,\pm}^* \right \|^2
	\right].
	\]
	Here $\widetilde{\psi_{2,\phi}}$ denotes the second eigenfunction of the population projected operator $\Gamma_{\widetilde{\Pi}_p}$, so that $\mathrm{Population}(p,\alpha)$ isolates the pure discretisation error. In practice, $\mathrm{MSE}(n,p,\alpha)$ is approximated by the empirical average over $40$ Monte Carlo replicates.
	
	For the second eigenvalue, in line with the theoretical results, we consider the relative squared error (RSE) and report both its population and empirical mean versions:
	\[
	\mathrm{Population\text{-}RSE}(p,\alpha):=
	\left(
	\frac{\widetilde{\lambda_{2,\phi}}-\lambda_2^*}{\lambda_2^*}
	\right)^2,
	\qquad
	\mathrm{Mean\text{-}RSE}(n,p,\alpha):=
	\mathbb{E}\left[
	\left(
	\frac{\widehat{\widetilde{\lambda_{2,\phi}}}-\lambda_2^*}{\lambda_2^*}
	\right)^2
	\right].
	\]
	Here $\widetilde{\lambda_{2,\phi}}$ denotes the second eigenvalue of the population projected operator $\Gamma_{\widetilde{\Pi}_p}$. Thus, $\mathrm{Population\text{-}RSE}(p,\alpha)$ measures the pure discretisation error, whereas $\mathrm{Mean\text{-}RSE}(n,p,\alpha)$ also incorporates statistical variability.

	\textbf{Estimation Procedure.} A specific approximation is available for the coefficients of the $\log_2(p)$-th coiflets approximation level representation (see \eqref{eq:approx_proj}). To compute our estimator, we transform our data into its representation in the $\log(p)$ scale function basis using our approximation of the scalar products. With this finite-dimensional representation of the observations, we can define the empirical version of the covariance kernel projected onto the wavelet basis, which is then a matrix. Then to proceed we can simply diagonalize this matrix to obtain the representations of the eigenfunctions sought in the wavelet basis as allowed by Lemma \ref{lem:spectral_relation}.
	Then, to reconstruct the eigenfunctions from their representation in the basis, we use the \textit{pywt.Wavelet} function (from \cite{pywt}) to compute the coiflet mother wavelet function $\phi$ (we used \textit{coif2}, coiflets with only 2 vanishing moments), which we evaluate on a very fine dyadic grid, and from this function we compute the $\log_2(p)$-th scale functions $\left(\phi_{\log_2(p),k}\right)_{k=0}^{p-1}$ on the chosen error evaluation grid.
	Before using the coefficients of the eigenfunctions in the basis and the scaling functions to reconstruct the eigenfunctions on the error evaluation grid, we performed a
	symmetric reflection of the vectors of coefficients to allow the scaling functions to overlap boundaries without wrapping or zeroing, enabling accurate evaluation near edges.

	\subsection{Numerical results}
	We investigate the effect of the grid size by simulating processes on regular grids $t_j=\frac{j-1}{p},1\leq j\leq p$, with $p\in\{2^3,\dots,2^{10}\}$. Then we set $n\in\{2^9, \dots, 2^{14}\}$, $\alpha\in\{1,1.5\}$ and $\sigma^2 = 0.1$. Each configuration is simulated $40$ times.
	
	\paragraph{Effect of sample size~$n$.}
	Firstly, the decay of the empirical Mean Squared Error (MSE) matches the theoretical decay in sample size for all regularities (see Figure~\ref{fig:mse_vs_n}), as the error decays in $n^{-1}$. This holds for both models: the minimax aliasing construction does not alter the statistical order in~$n$.
	
	\begin{figure}
		\centering
		\includegraphics[width=0.85\textwidth]{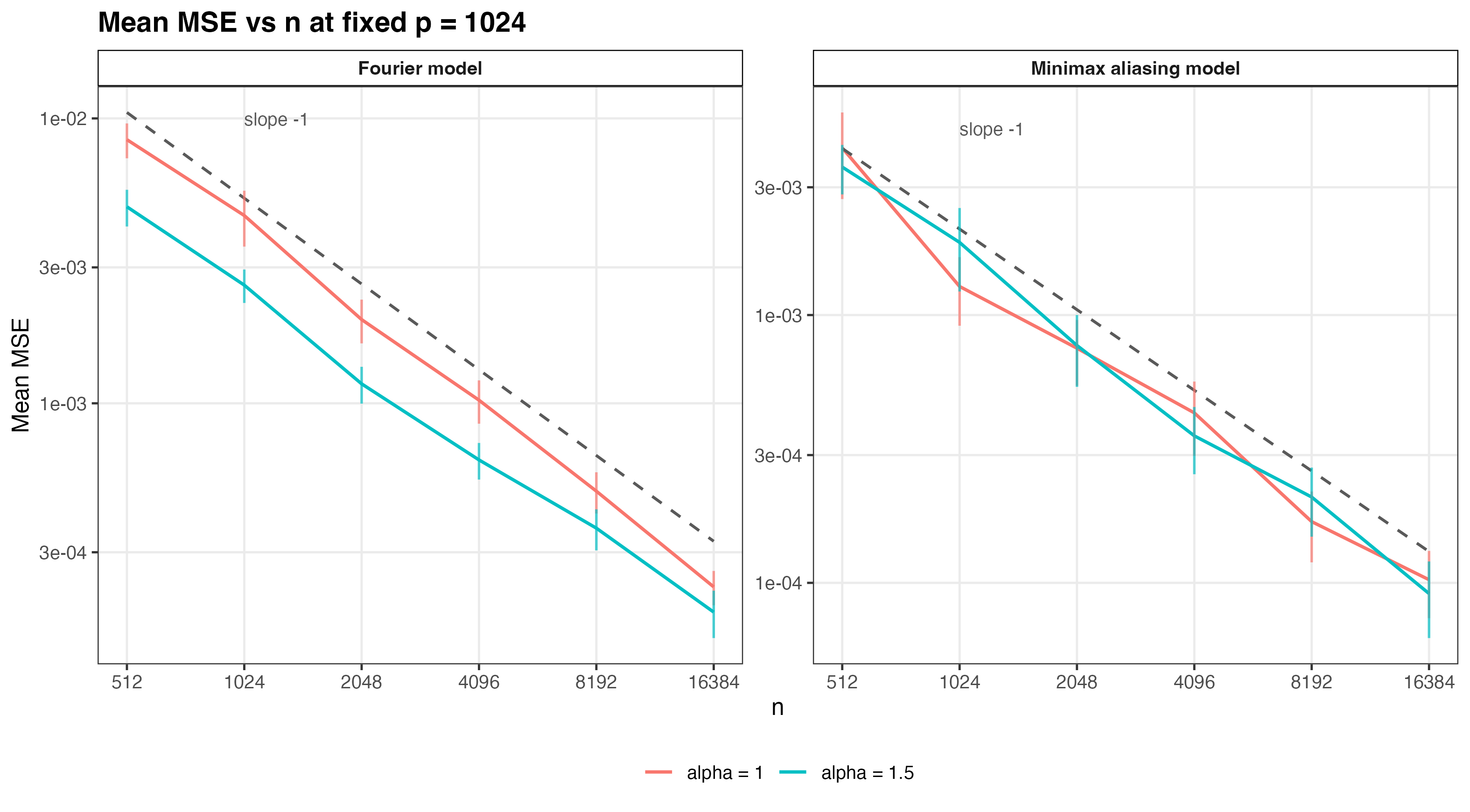}
		\caption{Mean MSE against sample size~$n$ (log-log scale) for both models and
			regularities $\alpha \in \{1,1.5\}$, at fixed grid size $p=1024$ and noise level
			$\sigma^2=0.1$.}
		\label{fig:mse_vs_n}
	\end{figure}
	
	\paragraph{Effect of grid size~$p$ -- population level.}
	Concerning the decay of the MSE with respect to the grid size, the population MSE (Figure~\ref{fig:pop_mse_vs_p}) isolates the discretisation error. In the minimax aliasing model, the curves and slopes are aligned with the reference slope~$p^{-2\alpha}$, confirming that this scenario saturates the theoretical discretisation bound predicted by the minimax theory. In the Fourier model, the decay is significantly faster than~$p^{-2\alpha}$, illustrating that one can generally expect more favourable rates than the worst case. In fact, the decay observed is $p^{-4}$ regardless of $\alpha$ which seems to follow the best approximation error reachable with the \textit{coif2} used. The numerical results also show that the greater the regularity, the smaller the MSE is for a fixed sample size and grid size, which is indeed what our theory predicted.
	
	\begin{figure}
		\centering
		\includegraphics[width=0.85\textwidth]{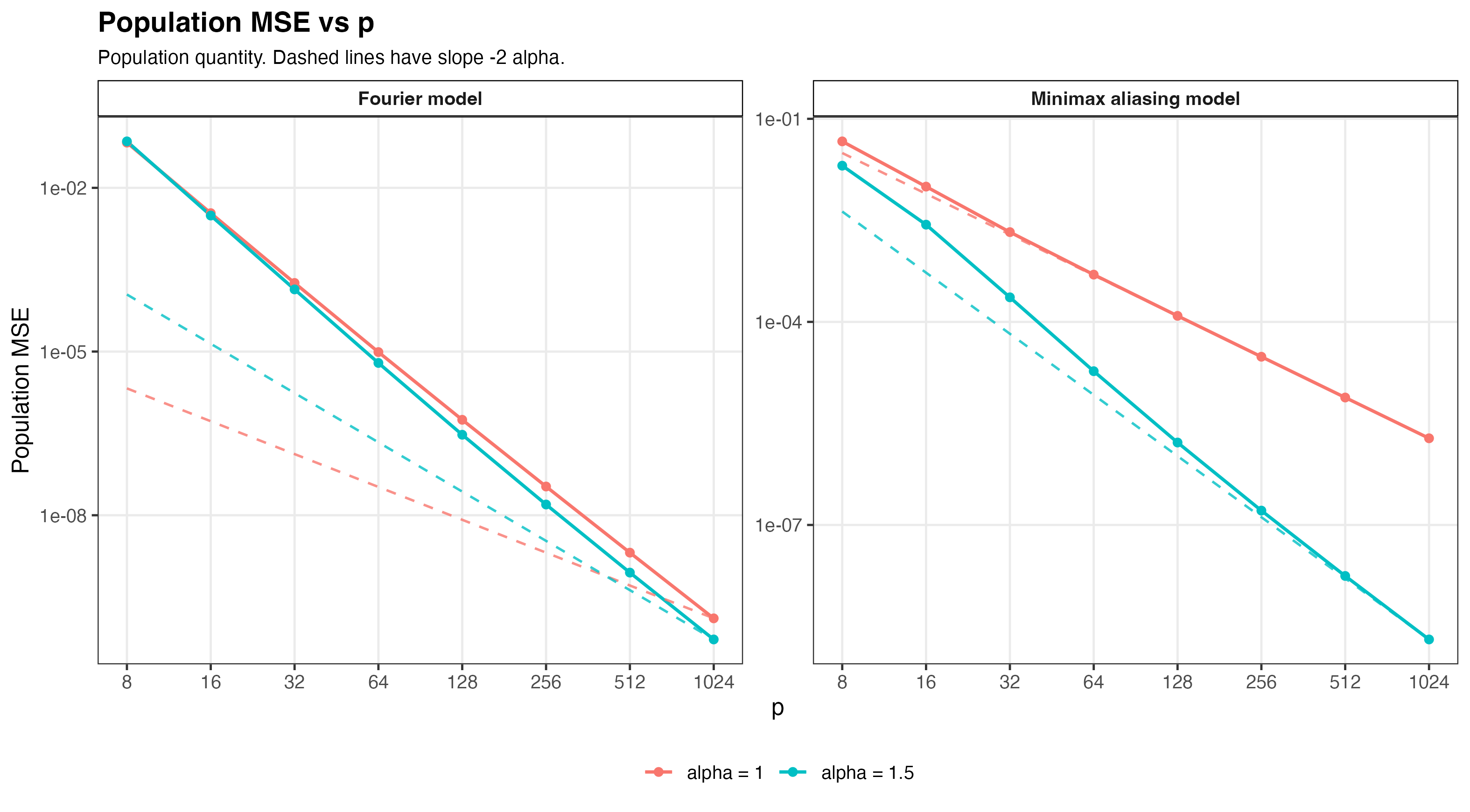}
		\caption{Population eigenfunction MSE against grid size~$p$ (log-log scale) for
			both models and $\alpha \in \{1,1.5\}$. Reference lines
			of slope~$-2\alpha$ are shown.}
		\label{fig:pop_mse_vs_p}
	\end{figure}
	
	\paragraph{Effect of grid size~$p$ -- empirical level.}
	Figure~\ref{fig:mean_mse_vs_p} shows the empirical counterpart (mean MSE over 40 replicates). For small to moderate values of~$p$, the curves track the discretisation regime with slopes close to~$2\alpha$. As~$p$ increases at fixed~$n$, the curves level off and the local slopes drop towards~$0$: the error then becomes dominated by the statistical noise~$1/n$ rather than by the discretisation, which corresponds to the phase transition. The theoretical transition occurs when $p^{-2\alpha}\approx n^{-1}$, i.e.\ around $p^*\approx n^{1/(2\alpha)}$. With $n=16384=2^{14}$, this gives $p^*\approx 2^7=128$ for $\alpha=1$ and $p^*\approx 2^{14/3}\approx 25$ for $\alpha=1.5$. On the minimax aliasing panel of Figure~\ref{fig:mean_mse_vs_p}, the plateau indeed sets in around $p\approx 64$--$128$ for $\alpha=1$ and $p\approx 32$--$64$ for $\alpha=1.5$, in agreement with these predictions.
	
	\begin{figure}
		\centering
		\includegraphics[width=0.85\textwidth]{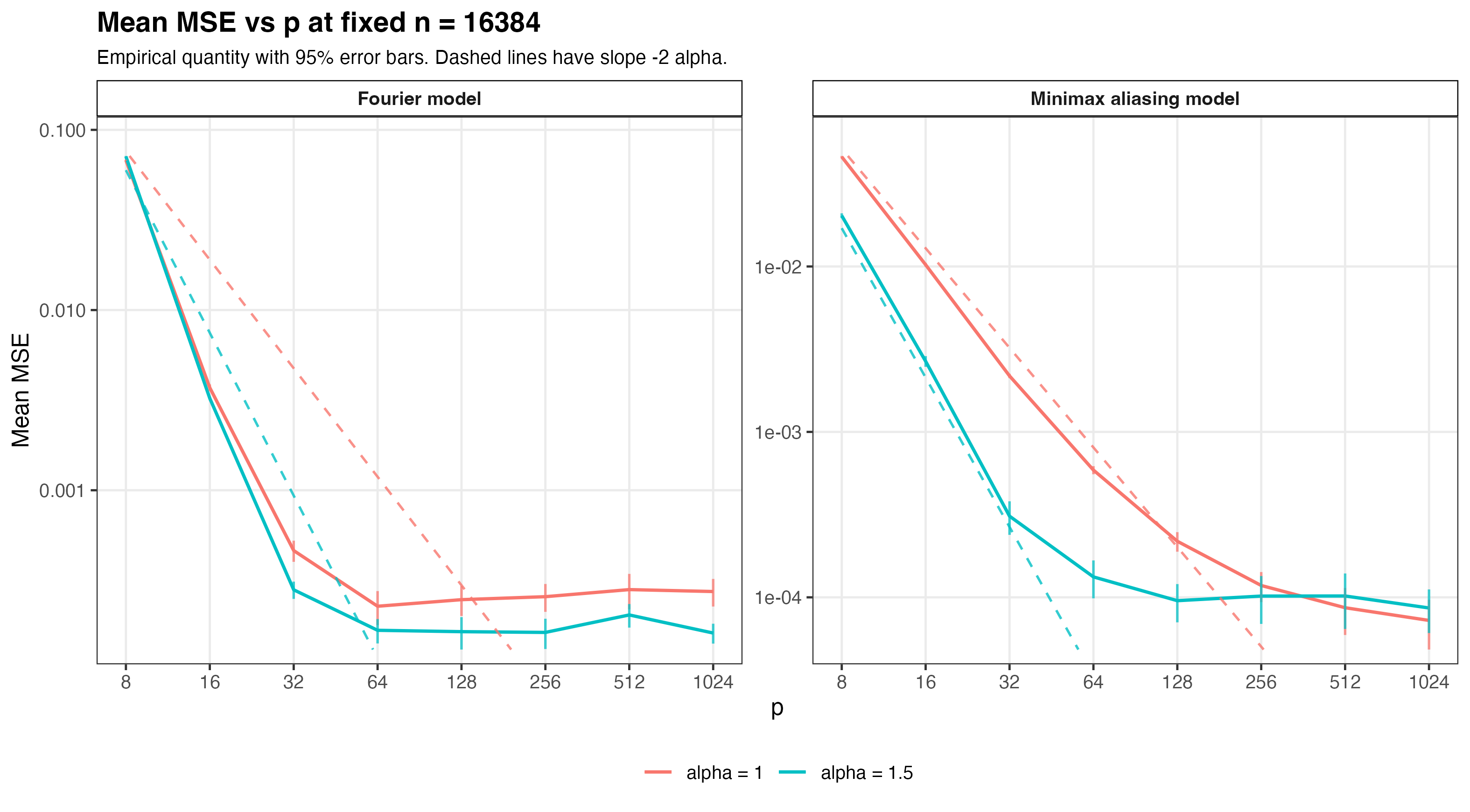}
		\caption{Mean eigenfunction MSE against grid size~$p$ (log-log scale) for both
			models and $\alpha \in \{1,1.5\}$, at fixed $n=16384$.}
		\label{fig:mean_mse_vs_p}
	\end{figure}
	
	\paragraph{Eigenvalue error (minimax aliasing model).}
	Figures~\ref{fig:eigval_vs_p} focuses on the population relative squared eigenvalue error in the minimax aliasing model. The error decays as~$p^{-4\alpha}$, with local slopes close to~$4\alpha$. This is consistent with the lower bound established in Theorem~\ref{Thm_eigval}.
	
	\begin{figure}
		\centering
		\includegraphics[width=0.85\textwidth]{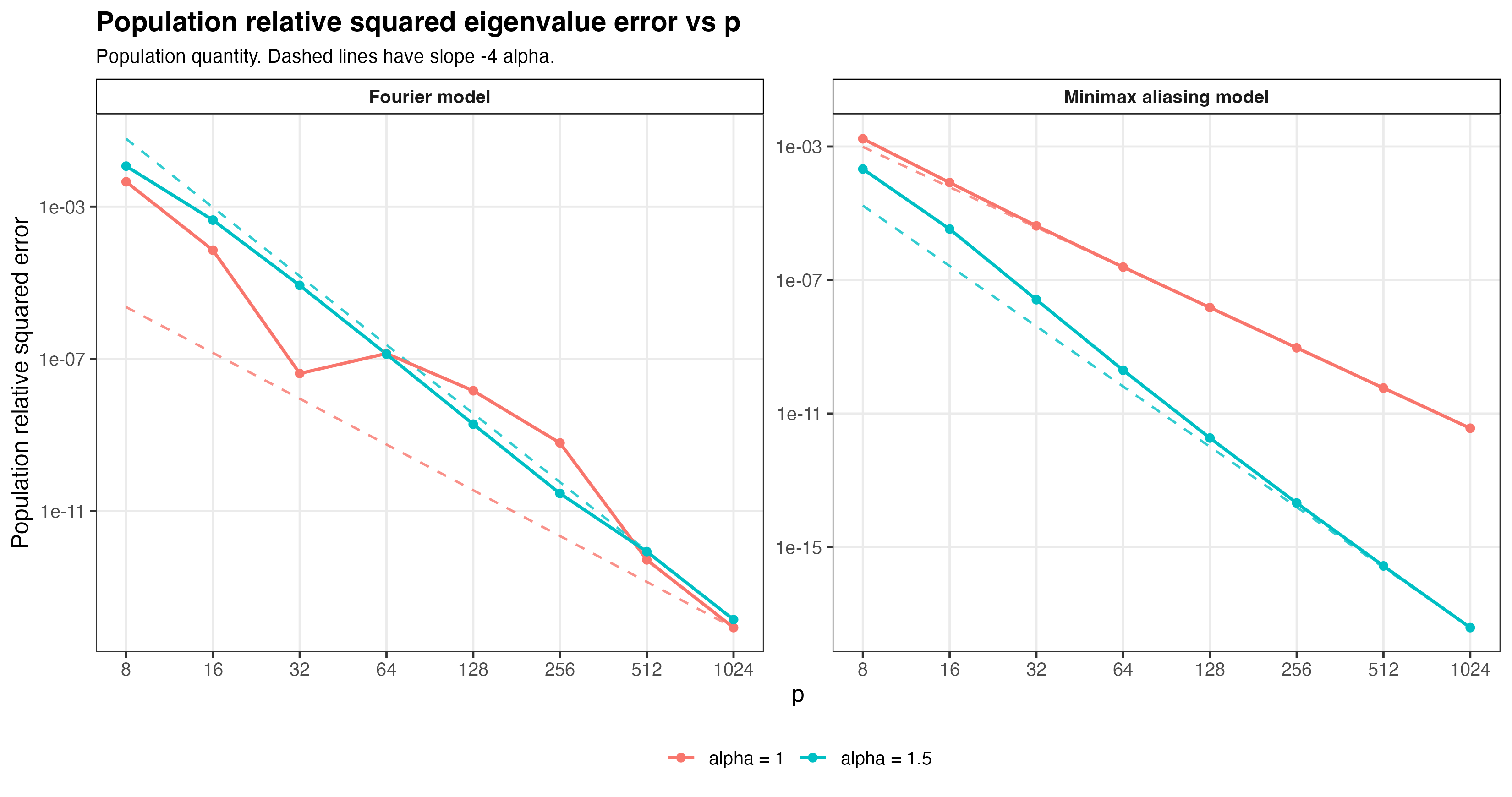}
		\caption{Population corrected relative squared eigenvalue error against~$p$
			(log-log scale) for the minimax aliasing model and
			$\alpha \in \{1,1.5\}$.}
		\label{fig:eigval_vs_p}
	\end{figure}
	\section*{Conclusion and perspectives}
	In this article we present the convergence rates for estimating the eigenvalues and eigenfunctions of the covariance operator of a sample of functional data that have been observed on a fixed grid. The rates that we prove for the eigenfunctions are minimax optimal. Our theoretical results apply to covariance operators of arbitrary smoothness, including those that are not twice differentiable. This notably covers classical processes such as Brownian motion or Brownian bridges whose covariance kernels are not differentiable on the diagonal. In particular, we extend the recent work \cite{zhou2024theoryFPCA}.\\
	However, further improvement could be made in order to understand the exact dependence of the constants on the rank $\ell$ of the estimated eigenfunction. To the best of our knowledge, this problem remains open. In the case of fully observed functional data, \cite{mas} 
	showed that the convergence rate with respect to the rank $\ell$ is of order $\ell^2$ up to logarithmic factors, under polynomial eigenvalue decay. This rate has been proven to be optimal for standard estimators based on the empirical covariance operator. In the discretely observed setting, the intricate interplay between the sample size, the grid resolution and the regularity of the kernel and/or the functional data makes it difficult to characterize precisely the dependence of the constants in the upper and lower bounds on the rank. This challenging problem is left for future research. \\
	Another challenging issue is determining the minimax convergence rate for estimating eigenvalues. This is left for further research. \\
			\section*{Acknowledgements}
			The authors acknowledge the support of the French Agence Nationale de la Recherche (ANR) under reference ANR-24-CE40-2439 (FUNMathStatproject). 
			\appendix
			\section{Proofs}\label{sec:proof}
			\subsection{Proof of Theorem~\ref{thm:inconsistency}}\label{subsec:proof_inconsistency}
			Let $n\in\mathbb{N}\setminus\{0\},p\in\mathbb{N}\setminus\{0\}$ such that $p\geq 4$, let $\ell\in\mathbb{N}$ such that $\ell+1<\frac{p}{2}$ and let $b_n\in]0,1[$ that we will choose later on. Let $\beta\in(0,1]$ and $m\in\mathbb{N}$ and define $\alpha=m+\beta$.
			We introduce for $j\geq $1,
			\[
			\begin{aligned}
				a_j : [0,1] &\longrightarrow [-\sqrt{2},\sqrt{2}] \\
				x &\longmapsto \sqrt{2}\sin(2\pi j x).
			\end{aligned}
			\]
			For $k\in\{0,1\},j\in[\![1,\ell -1]\!],t \in[0,1]$ let
			$$\psi_{j,k}^{*}(t):=a_{j}(t),$$
			and
			\begin{equation}\label{eq:def_etas}
				\begin{aligned}
					&\psi_{\ell,0}^{*}(t):=a_{\ell}(t),\quad &&\psi_{\ell+1,0}^{*}(t):=a_{\ell+1}(t),\\
					&\psi_{\ell,1}^{*}(t):=b_na_{\ell}(t)-\sqrt{1-b_n^2} a_{\ell+1}(t), \quad&&\psi_{\ell+1,1}^{*}(t):=b_na_{\ell+1}(t)+ \sqrt{1-b_n^2} a_\ell (t).\\
				\end{aligned}
			\end{equation}
			
			We introduce for all $k\in\{0,1\}$, $j\in[\![1,\ell+1]\!]$, $\xi_{j,k}\overset{i.i.d.}{\sim}  \mathcal{N}\left ( 0,1 \right )$ and for all $(t,u)\in [0,1]^2$:
			\begin{equation*}
				Z_k(t):=\sum_{j=1}^{\ell+1 }\sqrt{\lambda_{j,k}^*}\xi_{j,k} \psi_{j,k}^*(t)\quad\text{ and }\quad
				K_k(u,t):=\mathbb{E}\left[Z_k(t)Z_k(u)\right], 
			\end{equation*}
			with $\lambda_{1,k}^*>\hdots>\lambda_{\ell,k}^*>\lambda_{\ell+1,k}^*$ such that 
			\begin{eqnarray}\label{eq:cond_val_p_n}
				4(2\pi)^{\alpha}\left(\sum_{j=1}^{\ell-1}j^{\alpha}\lambda_{j,k}^*+2\lambda_{\ell,k}^*\left(\ell^{\alpha}+(\ell+1)^{\alpha}\right)\right)\leq L  &\text{ for }&k=0,1\nonumber\\
				\lambda_{j,0}^*=\lambda_{j,1}^* &\text{ for }&j\in[\![1,\ell-1[\!],
			\end{eqnarray}
			and
			
			As $K_k(u,t):=\sum_{j=1}^{\ell+1 }\lambda_{j,k}^* \psi_{j,k}^*(t)\psi_{j,k}^*(u)$ we get that for any given $(t,u)\in]0,1[^2$,
			\begin{equation}\label{eq:derivative_K}
				\frac{\partial^m K_k}{\partial u^m}(u,t)=\sum_{j=1}^{\ell+1 }\lambda_{j,k}^* \psi_{j,k}^*(t)\frac{\partial^m}{\partial u^m} \psi^*_{j,k}(u).
			\end{equation}
			
			Hence for $(u,u',t)\in ]0,1[^3$, on the one hand,
			\begin{align*}
				\left| \frac{\partial^m K_0}{\partial u^m}(u,t) - \frac{\partial^m K_0}{\partial u^m}(u',t)\right|
				&\leq \sum_{j=1}^{\ell+1}\lambda_{j,0}^*\left | \psi^*_{j,0}(t) \right |\left | \frac{\partial^m \psi^*_{j,0}}{u^m}(u)-\frac{\partial^m \psi^*_{j,0}}{u^m}(u') \right |\\
				&\leq \sum_{j=1}^{\ell+1}\lambda_{j,0}^* \left| a_j(t) \right| \left| a_j^{(m)}(u) - a_j^{(m)}(u') \right|
			\end{align*}
			as $\psi^*_{j,0}=a_j$.
			And using $\underset{x\in [0,1]}{\operatorname{max}} (a_j(x))=\sqrt{2}$ and Lemma \ref{lemm:trigo} we obtain
			\begin{equation}\label{eq:cond_K_0_1}
				\begin{aligned}
					\left| \frac{\partial^m K_0}{\partial u^m}(u,t) - \frac{\partial^m K_0}{\partial u^m}(u',t)\right|
					&\leq  4\sum_{j=1}^{\ell+1 }(2\pi j)^{\alpha}\lambda_{j,0}^*\left| u-u' \right|^{\beta}\\
					&\leq L\left| u-u' \right|^{\beta}, \text{by Equation \eqref{eq:cond_val_p_n}.}
				\end{aligned}.
			\end{equation}
			
			On the other hand, using the same arguments we can bound
			\begin{eqnarray*}
				\left| \frac{\partial^m K_1}{\partial u^m}(u,t) - \frac{\partial^m K_1}{\partial u^m}(u',t)\right|
				&  \leq& \sum_{j=1}^{\ell+1 }\lambda_{j,1}^*\left | \psi^*_{j,1}(t) \right |\left | \frac{\partial^m \psi^*_{j,1}}{u^m}(u)-\frac{\partial^m \psi^*_{j,1}}{u^m}(u') \right |\\
				&\hspace{-6cm}\leq & \hspace{-3cm}\sum_{j=1}^{\ell-1 }\lambda_{j,1}^*\left | \psi^*_{j,1}(t) \right |\left | a_j^{(m)}(u)-a_j^{(m)}(u') \right | \\
				&&\hspace{-3cm}+ \lambda_{\ell,1}^*\left | \psi^*_{\ell,1}(t) \right |\left(b_n\left|a_\ell^{(m)}(u)-a_\ell^{(m)}(u')\right|+\sqrt{1-b_n^2}\left|a_{\ell+1}^{(m)}(u)-a_{\ell+1}^{(m)}(u')\right|\right)\\
				&&\hspace{-3cm}+ \lambda_{\ell+1,1}^*\left | \psi^*_{\ell+1,1}(t) \right |\left(b_n\left|a_{\ell+1}^{(m)}(u)-a_{\ell+1}^{(m)}(u')\right|+\sqrt{1-b_n^2}\left|a_{\ell}^{(m)}(u)-a_{\ell}^{(m)}(u')\right|\right).
			\end{eqnarray*}
			Combining the last equation with \eqref{eq:def_etas} and Lemma \ref{lemm:trigo} we obtain
			\begin{equation*}
				\begin{aligned}
					\left| \frac{\partial^m K_1}{\partial u^m}(u,t) - \frac{\partial^m K_1}{\partial u^m}(u',t)\right|
					&\leq 4\left(\sum_{j=1}^{\ell -1}(2\pi j)^{\alpha}\lambda_{j,1}^* + \left( (2\pi(\ell+1))^{\alpha}\left ( \lambda_{\ell+1,1}^*b_n+\lambda_{\ell,1}^*\sqrt{1-b_n^2} \right )\right.\right.\\
					&\quad\quad\quad\left.+(2\pi \ell)^{\alpha}\left( \lambda_{\ell+1,1}^*\sqrt{1-b_n^2}+\lambda_{\ell,1}^*b_n \right)
					\right)\Bigg) \left | u-u' \right |^{\beta},
				\end{aligned}
			\end{equation*}
			and as $b_n\in]0,1[$ we have 
			\begin{equation*}\label{eq:cond_K_1_1}
				\begin{aligned}
					\left| \frac{\partial^m K_1}{\partial u^m}(u,t) - \frac{\partial^m K_1}{\partial u^m}(u',t)\right|
					&\leq 4(2\pi)^{\alpha}\left(\sum_{j=1}^{\ell -1}j^{\alpha}\lambda_{j,1}^* +
					\left((\ell+1)^{\alpha}+\ell ^{\alpha}\right)\left( \lambda_{\ell+1,1}^*+\lambda_{\ell,1}^* \right)
					\right) \left | u-u' \right |^{\beta}\\
					&\leq 4(2\pi)^{\alpha}\left(\sum_{j=1}^{\ell -1}j^{\alpha}\lambda_{j,1}^* +
					2\lambda_{\ell,1}^*\left((\ell+1)^{\alpha}+\ell ^{\alpha}\right)
					\right) \left | u-u' \right |^{\beta}\\        
					&\leq L\left| u-u' \right|^{\beta}, \text{by Equation \eqref{eq:cond_val_p_n}.}
				\end{aligned}
			\end{equation*}
			Then $P_{Z_k} \in \mathcal{P}(\alpha,L)$ for all $k\in\{0,1\}$ and we have
			$$\inf _{\widehat{\psi}_{\ell}} \sup _{P_{X} \in \mathcal{P}(\alpha,L)} \mathbb{E}\left[\left\|\widehat{\psi}_{\ell}-\psi_{\ell}(\Gamma_{P_X})\right\|^{2}\right] \geq \inf _{\hat{\psi}_{\ell}} \sup _{k=0,1} \mathbb{E}\left[\left\|\widehat{\psi}_{\ell}-\psi_{\ell, k}^{*}\right\|^{2}\right].$$
			We can therefore focus on the control of $\inf _{\hat{\psi}_{\ell}} \sup _{k=0,1} \mathbb{E}\left[\left\|\widehat{\psi}_{\ell}-\psi_{\ell, k}^{*}\right\|^{2}\right]$.
			
			Let $\widehat{\psi}_{\ell}$ be an estimator and $\widehat{\phi}$ defined by
			$$
			\widehat{\phi}:=\arg \min _{k=0,1}\left\|\widehat{\psi}_{\ell}-\psi_{\ell, k}^{*}\right\|^{2},
			$$
			we have for $k=0,1$,
			$$
			\left\|\widehat{\psi}_{\ell}-\psi_{\ell, k}^{*}\right\| \geq \frac{1}{2}\left\|\psi_{\ell, \widehat{\phi}}^{*}-\psi_{\ell, k}^{*}\right\|.
			$$
			Using the orthogonality of $a_\ell$ and $a_{\ell+1}$, we have
			$$
			\begin{aligned}
				\left\|\widehat{\psi}_{\ell}-\psi_{\ell, k}^{*}\right\|^{2} &\geq \frac{1}{4}\mathbf{1}_{\{\widehat{\phi} \neq k\}}\left\|\psi_{\ell,0}^{*}-\psi_{\ell,1}^{*}\right\|^{2}
				\geq\frac{1}{4}\mathbf{1}_{\{\widehat{\phi} \neq k\}}\left\|\left ( 1-b_n \right ) a_{\ell}+\sqrt{1-b_n^2}a_{\ell+1} \right\|^{2}\\
				&\geq\frac{1}{4}\mathbf{1}_{\{\widehat{\phi} \neq k\}}\left(\left ( 1-b_n \right )^2+(1-b_n^2)\right)\\
				&\geq\frac{1}{2}\mathbf{1}_{\{\widehat{\phi} \neq k\}}\left ( 1-b_n \right ).
			\end{aligned}
			$$
			Then,
			\begin{equation}\label{eq:borne_prop_1_proof}
				\inf _{\widehat{\psi}_{\ell}} \sup _{P_{X} \in \mathcal{P}(\alpha,L)} \mathbb{E}\left[\left\|\widehat{\psi}_{\ell}-\psi_{\ell}(\Gamma_{P_X})\right\|^{2}\right] \geq \frac{1}{2}(1-b_n)  \inf _{\widehat{\phi}} \max _{k=0,1} \mathbb{P}(\widehat{\phi} \neq k).
			\end{equation}
			We now establish a lower bound on $\mathbb{P}(\widehat{\phi} \neq k).$ This is achieved by applying Theorem 2.2 of \cite{Tsybakov2009IntroductionTN}, which is restated in the appendix as Lemma~\ref{lemm1}. 
			Our objective is to prove the existence of a constant $H_{\max}^2 < 2$ such that
			\begin{equation}\label{eq:cond_H}
				H^{2}\left(\left(P_{0}^{\mathrm{obs}}\right)^{\otimes n},
				\left(P_{1}^{\mathrm{obs}}\right)^{\otimes n}\right)
				\le H_{\max}^{2}.
			\end{equation}
			Here, for two probability measures $P$ and $Q$ dominated by the Lebesgue measure $\mu$, with respective densities $p$ and $q$ with respect to $\mu$, the Hellinger distance is defined by
			$$H(P,Q)
			\coloneqq
			\left(\frac{1}{2}\int (\sqrt{p}-\sqrt{q})^{2}\, d\mu \right)^{1/2}.$$
			Moreover, $P_{k}^{\mathrm{obs}}$ denotes the distribution of the random vector
			$$\mathbf{Y}_k^{\mathrm{obs}}:=
			\bigl(Y_k(t_1), \ldots, Y_k(t_p)\bigr),$$
			where
			$$Y_k(t_j) = Z_k(t_j) + \varepsilon_{k,j},$$
			with independent Gaussian noise variables $$\varepsilon_{k,j} \overset{\text{i.i.d.}}{\sim} \mathcal{N}(0,\sigma^2)$$ for $j \in [\![1,p]\!]$, and where $\sigma^2>0$ is fixed.
			Then, in this case by Lemma \ref{lemm1}, we would have
			\begin{equation*}
				\inf _{\widehat{\phi}} \max _{k=0,1} \mathbb{P}(\widehat{\phi} \neq k) \geq \frac{1}{2}\left(1-\sqrt{H_{\max }^{2}\left(1-H_{\max }^{2} / 4\right)}\right)>0.
			\end{equation*}
			In our case, we remark that:
			$$\mathbf{Y}_k^{ o b s}\sim \mathcal{N}(0,G_k),$$
			with
			\begin{equation}\label{eq:def_matrice_obs}
				\left[G_k\right]_{\ell',k'}=\mathbb{E}\left[Y_{k}\left(t_{\ell'}\right) Y_{k}\left(t_{k'}\right)\right]=\sum_{j=1}^{\ell+1} \lambda_{j,k}^*\psi_{j,k}^*(t_{k'})\psi_{j,k}^*(t_{\ell'})+\mathbf{1}_{\ell'=k'}\sigma^2.
			\end{equation}
			We shall subsequently use the identity
			\begin{equation}\label{eq:link_Hellinger_A}
				H^{2}\left(\left(P_{0}^{\mathrm{obs}}\right)^{\otimes n},
				\left(P_{1}^{\mathrm{obs}}\right)^{\otimes n}\right)
				= 2 - 2A\left(P_{0}^{\mathrm{obs}}, P_{1}^{\mathrm{obs}}\right)^{n},
			\end{equation}
			where $$A(P,Q) \coloneqq \int \sqrt{pq}\, d\mu$$ denotes the Hellinger affinity between two probability measures $P$ and $Q$ that admit densities $p$ and $q$ with respect to the Lebesgue measure $\mu.$
			In our case where the variables are Gaussian with equal mean vectors, we have an explicit formula for the Hellinger affinity: By using Lemma \ref{lemm2}, we get
			\begin{equation}\label{eq:def_affinite_gauss}
				A\left(P_{0}^{o b s}, P_{1}^{o b s}\right)^n=\left(\frac{\operatorname{det}\left(G_{0} G_{1}\right)^{1 / 4}}{\operatorname{det}\left(\left(G_{0}+G_{1}\right) / 2\right)^{1 / 2}}\right)^n,
			\end{equation}
			and focus now on the control of this quantity.  
			
			Denote $\mathbf a_j = (a_j(t_1),\hdots,a_j(t_{p}))^t$, using \eqref{eq:def_matrice_obs} and Lemma \ref{lemm:trigo} we have
			$$G_0=\sum_{j=1}^{\ell+1} \lambda_{j,0}^*\boldsymbol{a}_j\boldsymbol{a}_j^{t}+\sigma^2I_p$$
			and
			\begin{align*}
				G_1&=\sum_{j=1}^{\ell -1}\lambda_{j,1}^*\boldsymbol{a}_j\boldsymbol{a}_j^{t}+\lambda_{\ell,1}^*\left(b_n^2\boldsymbol{a}_{\ell}\boldsymbol{a}_{\ell}^t+\left ( 1-b_n^2 \right )\boldsymbol{a}_{\ell+1}\boldsymbol{a}_{\ell+1}^t-b_n\sqrt{1-b_n^2}\left(\boldsymbol{a}_{\ell}\boldsymbol{a}_{\ell+1}^t+\boldsymbol{a}_{\ell+1}\boldsymbol{a}_{\ell}^t\right)\right)\\
				&\quad +\lambda_{\ell+1,1}^*\left(\left ( 1-b_n^2 \right )\boldsymbol{a}_{\ell}\boldsymbol{a}_{\ell}^t+b_n^2\boldsymbol{a}_{\ell+1}\boldsymbol{a}_{\ell+1}^t+b_n\sqrt{1-b_n^2}\left(\boldsymbol{a}_{\ell}\boldsymbol{a}_{\ell+1}^t+\boldsymbol{a}_{\ell+1}\boldsymbol{a}_{\ell}^t\right)\right)+\sigma^2I_p\\
				&=\sum_{j=1}^{\ell -1}\lambda_{j,1}^*\boldsymbol{a}_j\boldsymbol{a}_j^{t}+\left ( \lambda_{\ell+1,1}^*+\Delta_1b_n^2 \right )\boldsymbol{a}_{\ell}\boldsymbol{a}_{\ell}^t+\left ( \lambda_{\ell,1}^*-\Delta_1b_n^2 \right )\boldsymbol{a}_{\ell+1}\boldsymbol{a}_{\ell+1}^t\\
				&\quad -\Delta_1 b_n\sqrt{1-b_n^2}\left(\boldsymbol{a}_{\ell}\boldsymbol{a}_{\ell+1}^t+\boldsymbol{a}_{\ell+1}\boldsymbol{a}_{\ell}^t\right)+\sigma^2I_p,
			\end{align*}
			where $I_p$ is the identity matrix of dimension $p$ and 
			\begin{equation*}
				\Delta_k:=\lambda_{\ell,k}^*-\lambda_{\ell+1,k}^*
			\end{equation*}
			for all $k\in\{0,1\}$ is a difference between consecutive eigenvalues of the $k$-th process.
			
			We then compute the determinants in \eqref{eq:def_affinite_gauss} by analyzing $G_0$ and $G_1$ in terms of eigen-values/vectors.

			For that purpose, we set $v_i:=\frac{\boldsymbol{a}_i}{\left \| \boldsymbol{a}_i \right \|_{\ell_2}}$ for all $i\in[\![1,\ell+1 ]\!]$ so that $\left\|v_{i}\right\|_{\ell_{2}}=1$ and $v_{\ell+2}, \ldots, v_{p}$ an orthonormal basis of $\operatorname{span}\left(v_{1},\dots ,v_{\ell}\right)^{\perp}$ in $\mathbb{R}^p$, and $V$ the orthogonal matrix
			$$
			V:=\left[v_{1} ; v_{2} ; \cdots ; v_{p}\right] .
			$$
			We set
			\begin{equation*}
				\begin{aligned}
					\kappa_j:&=(\lambda_{j,0}^*\left \| \boldsymbol{a}_j \right \|^2_{\ell_2}\mathbf{1}_{j\in[\![1,\ell+1 ]\!]}+\sigma^2),\\
					\kappa_{\ell}^{(\Delta_1)}&:=\left(\left ( \lambda_{\ell+1,1}^*+\Delta_{1}b_n^2 \right )\left \| \boldsymbol{a}_{\ell} \right \|^2_{\ell_2}+\sigma^2\right),\\
					\kappa_{\ell+1}^{(\Delta_1)}&:=\left(\left ( \lambda_{\ell,1}^*-\Delta_{1}b_n^2 \right )\left \| \boldsymbol{a}_{\ell+1} \right \|^2_{\ell_2}+\sigma^2\right).
				\end{aligned}
			\end{equation*}
			
			We can then write that
			$$G_0v_j=\kappa_j v_j$$
			and
			$$G_1v_j=\begin{cases}
				\kappa_j v_j & \text{ if } j\in [\![1,p]\!]\setminus\left\{\ell,\ell+1\right\}\\ 
				\kappa_{\ell}^{(\Delta_1)}v_{\ell}-\Delta_1b_n\sqrt{1-b_n^2}\left \| \boldsymbol{a}_{\ell} \right \|_{\ell_2}\left \| \boldsymbol{a}_{\ell+1} \right \|_{\ell_2}v_{\ell+1} & \text{ if } j=\ell \\ 
				\kappa_{\ell+1}^{(\Delta_1)}v_{\ell+1}-\Delta_1b_n\sqrt{1-b_n^2}\left \| \boldsymbol{a}_{\ell+1} \right \|_{\ell_2}\left \| \boldsymbol{a}_{\ell} \right \|_{\ell_2}v_{\ell}
				& \text{ if } j=\ell+1 . 
			\end{cases}\\
			$$
			
			Therefore by setting
			\begin{equation*}
				\left [ \tilde G_0 \right ]_{j,k}=\begin{cases}
					\kappa_j & \text{ if }  j=k \\ 
					0 & \text{ else}  
				\end{cases}
			\end{equation*}
			and 
			\begin{equation*}
				\left [ \tilde G_1 \right ]_{j,k}=\begin{cases}
					\kappa_j & \text{ if } j=k,j\in [\![1,p]\!]\setminus\left\{\ell,\ell+1\right\} \\ 
					\kappa_{\ell}^{(\Delta_1)}&\text{ if } j=k=\ell \\
					\kappa_{\ell+1}^{(\Delta_1)}&\text{ if } j=k=\ell+1  \\
					-\Delta_1b_n\sqrt{1-b_n^2}\left \| \boldsymbol{a}_{\ell+1} \right \|_{\ell_2}\left \| \boldsymbol{a}_{\ell} \right \|_{\ell_2}&\text{ if }k-1=j=\ell  \\
					-\Delta_1b_n\sqrt{1-b_n^2}\left \| \boldsymbol{a}_{\ell} \right \|_{\ell_2}\left \| \boldsymbol{a}_{\ell+1} \right \|_{\ell_2}&\text{ if }j-1=k=\ell  \\
					0 & \text{ else}  \\
				\end{cases},
			\end{equation*}
			we have
			$$
			G_{0}=V\tilde G_{0} V^{t},\quad
			G_{1}=V\tilde G_1 V^{t}.
			$$
			This implies that
			$$G_0G_1=V\left(\tilde G_{0}\tilde G_1\right) V^{t},\quad \frac{G_0+G_1}{2}=V\left(\frac{\tilde G_0+\tilde G_1}{2}\right) V^{t}.$$
			Then, using the above equation, we know that
			$$\left [ \tilde G_0\tilde G_1 \right ]_{j,k}=\begin{cases}
				\kappa_j^2 & \text{ if } j=k,j\in [\![1,p]\!]\setminus\left\{\ell,\ell+1\right\} \\ 
				\kappa_{\ell}\kappa_{\ell}^{(\Delta_1)}&\text{ if } j=k=\ell  \\
				\kappa_{\ell+1}\kappa_{\ell+1}^{(\Delta_1)}&\text{ if } j=k=\ell+1  \\
				-\kappa_{\ell+1}\Delta_1b_n\sqrt{1-b_n^2}\left \| \boldsymbol{a}_{\ell} \right \|_{\ell_2}\left \| \boldsymbol{a}_{\ell+1} \right \|_{\ell_2}&\text{ if }k-1=j=\ell  \\
				-\kappa_{\ell}\Delta_1b_n\sqrt{1-b_n^2}\left \| \boldsymbol{a}_{\ell+1} \right \|_{\ell_2}\left \| \boldsymbol{a}_{\ell} \right \|_{\ell_2}&\text{ if }j-1=k=\ell  \\
				0 & \text{ else}  \\
			\end{cases},$$
			and that
			\begin{equation*}
				\left [ \frac{\tilde G_0+\tilde G_1}{2} \right ]_{j,k}=\begin{cases}
					\kappa_j & \text{ if } j=k,j\in [\![1,p]\!]\setminus\left\{\ell,\ell+1\right\} \\ 
					\frac{1}{2}\left( \kappa_{\ell}+\kappa_{\ell}^{(\Delta_1)}\right)&\text{ if } j=k=\ell \\
					\frac{1}{2}\left( \kappa_{\ell+1}+\kappa_{\ell+1}^{(\Delta_1)}\right)&\text{ if } j=k=\ell+1  \\
					-\frac{1}{2}\Delta_1b_n\sqrt{1-b_n^2}\left \| \boldsymbol{a}_{\ell} \right \|_{\ell_2}\left \| \boldsymbol{a}_{\ell+1} \right \|_{\ell_2}&\text{ if }k-1=j=\ell  \\
					-\frac{1}{2}\Delta_1b_n\sqrt{1-b_n^2}\left \| \boldsymbol{a}_{\ell+1} \right \|_{\ell_2}\left \| \boldsymbol{a}_{\ell} \right \|_{\ell_2}&\text{ if }j-1=k=\ell  \\
					0 & \text{ else}  \\
				\end{cases},
			\end{equation*}
			so we are able to compute the determinants that are involved in \eqref{eq:def_affinite_gauss}.
			
			We have that
			\begin{equation}\label{eq:dets}
				\begin{aligned}
					\operatorname{det}\left (G_0G_1 \right )&=\left ( \underset{i\in [\![1,p]\!]\setminus\left\{\ell,\ell+1\right\}}{\prod}\kappa_i^2 \right )\left ( \kappa_{\ell}\kappa_{\ell+1}\kappa_{\ell}^{(\Delta_1)}\kappa_{\ell+1}^{(\Delta_1)}-\kappa_{\ell}\kappa_{\ell+1}\Delta_1^2b_n^2\left ( 1-b_n^2 \right )\left \| \boldsymbol{a}_{\ell} \right \|_{\ell_2}^2\left \| \boldsymbol{a}_{\ell+1} \right \|_{\ell_2}^2 \right ),\\ 
					\operatorname{det}\left ( \frac{G_0+G_1}{2} \right )&=\left ( \underset{i\in [\![1,p]\!]\setminus\left\{\ell,\ell+1\right\}}{\prod}\kappa_i \right )\frac{1}{4}\\
					&\times\left ( \left ( \kappa_{\ell}+\kappa_{\ell}^{(\Delta_1)} \right )\left ( \kappa_{\ell+1}+\kappa_{\ell+1}^{(\Delta_1)} \right )-\Delta_1^2b_n^2\left ( 1-b_n^2 \right )\left \| \boldsymbol{a}_{\ell} \right \|_{\ell_2}^2\left \| \boldsymbol{a}_{\ell+1} \right \|_{\ell_2}^2 \right ).
				\end{aligned}
			\end{equation}
			To ensure that \eqref{eq:def_affinite_gauss} is strictly positive which will imply \eqref{eq:cond_H} we set
			\begin{equation}\label{eq:setting_eigvals}
				\left\{\begin{matrix}
					\kappa_{\ell}^{(\Delta_1)}:=\kappa_{\ell}\\ 
					\kappa_{\ell+1}^{(\Delta_1)}:=\kappa_{\ell+1}
				\end{matrix}\right.
				\Leftrightarrow \left\{\begin{matrix}
					\lambda_{\ell,0}^*=\lambda_{\ell+1,1}^*+\Delta_{1}b_n^2\\ 
					\lambda_{\ell+1,0}^*=\lambda_{\ell,1}^*-\Delta_{1}b_n^2
				\end{matrix}\right.\\
				\Leftrightarrow \left\{\begin{matrix}
					\lambda_{\ell+1,1}^*=\lambda_{\ell,0}^*-\Delta_{1}b_n^2\\ 
					\lambda_{\ell,1}^*=\lambda_{\ell+1,0}^*+\Delta_{1}b_n^2
				\end{matrix}\right..
			\end{equation}
			This results in
			\begin{equation}\label{eq:Delta_1}
				\Delta_1=\lambda_{\ell,1}^*-\lambda_{\ell+1,1}^*=2\Delta_1b_n^2-\Delta_0\Leftrightarrow 
				\Delta_1 = \frac{-\Delta_0}{1-2b_n^2},
			\end{equation}
			as $\Delta_k>0$ we have the constraint
			\begin{equation*}\label{eq:constraint_b_n}
				1\geq b_n^2>\frac{1}{2}.
			\end{equation*}
			Then, by substituting the quantities defined in Equation \eqref{eq:setting_eigvals} into Equations \eqref{eq:dets} we can compute the affinity in \eqref{eq:def_affinite_gauss}.
			As a result, to ensure that the affinity is bounded from below away from zero it suffices to have $1>C>0$ such that
			\begin{equation*}
				A\left(P_{0}^{o b s}, P_{1}^{o b s}\right)^n=\left(\frac{\operatorname{det}\left(G_{0} G_{1}\right)^{1 / 4}}{\operatorname{det}\left(\left(G_{0}+G_{1}\right) / 2\right)^{1 / 2}}\right)^n
				=\left (\frac{\left ( \kappa_{\ell}^2\kappa_{\ell+1}^2-\kappa_{\ell}\kappa_{\ell+1}\Delta_1^2b_n^2\left ( 1-b_n^2 \right )\left \| \boldsymbol{a}_{\ell} \right \|_{\ell_2}^2\left \| \boldsymbol{a}_{\ell+1} \right \|_{\ell_2}^2 \right )^{\frac{1}{4}}}{\left ( \kappa_{\ell}\kappa_{\ell+1}-\frac{\Delta_1^2}{4}b_n^2\left ( 1-b_n^2 \right )\left \| \boldsymbol{a}_{\ell+1} \right \|_{\ell_2}^2\left \| \boldsymbol{a}_{\ell} \right \|_{\ell_2}^2 \right )^{\frac{1}{2}}}  \right )^n\geq C,
			\end{equation*}
			and as $\frac{1}{2}<b_n^2\leq 1$ it is sufficient to have
			\[
			\left (\frac{\left ( \kappa_{\ell}^2\kappa_{\ell+1}^2-\kappa_{\ell}\kappa_{\ell+1}\Delta_1^2\left ( 1-b_n^2 \right )\left \| \boldsymbol{a}_{\ell} \right \|_{\ell_2}^2\left \| \boldsymbol{a}_{\ell+1} \right \|_{\ell_2}^2 \right )^{\frac{1}{4}}}{\left ( \kappa_{\ell}\kappa_{\ell+1}-\frac{\Delta_1^2}{8}\left ( 1-b_n^2 \right )\left \| \boldsymbol{a}_{\ell} \right \|_{\ell_2}^2\left \| \boldsymbol{a}_{\ell+1} \right \|_{\ell_2}^2 \right )^{\frac{1}{2}}}  \right )^n\geq C
			\]
			which  is equivalent to 
			\[
			\left (C^{\frac{4}{n}}-1  \right )\kappa_{\ell}^2\kappa_{\ell+1}^2+\left (1-\frac{C^{\frac{4}{n}}}{4}  \right )\Delta_1^2\kappa_{\ell}\kappa_{\ell+1}z_\ell (1-b_n^2)+\frac{C^{\frac{4}{n}}}{64}\Delta_1^4z_\ell ^2(1-b_n^2)^2 \leq 0
			\]
			with $z_{\ell}:=\left \| \boldsymbol{a}_{\ell} \right \|^2_{\ell_2}\left \| \boldsymbol{a}_{\ell+1}  \right \|^2_{\ell_2}.$\\
			
			By interpreting the last inequality as an inequality involving a polynomial on the variable $(1-b_n^2)$ we can compute the determinant of the polynomial:
			$$\Delta=\left (1-\frac{C^{\frac{4}{n}}}{4}  \right )^2\Delta_1^4\kappa_{\ell}^2\kappa_{\ell+1}^2z_\ell ^2-\frac{C^{\frac{4}{n}}}{16}\left (C^{\frac{4}{n}}-1  \right )\Delta_1^4\kappa_{\ell}^2\kappa_{\ell+1}^2z_\ell ^2=\left (1-\frac{7}{16}C^{\frac{4}{n}}  \right )\Delta_1^4\kappa_{\ell}^2\kappa_{\ell+1}^2z_\ell ^2> 0.$$
			Hence we can set $(1-b_n^2)$ to be  close to the largest root (as we want the lower-bound in \eqref{eq:borne_prop_1_proof} to be the largest possible) :
			\begin{equation}\label{eq:def_b_n_eigfun}
					1-b_n^2:=\frac{-\left (1-\frac{C^{\frac{4}{n}}}{4}  \right )\Delta_1^2\kappa_{\ell}\kappa_{\ell+1}z_\ell +\sqrt{\Delta}}{\frac{C^{\frac{4}{n}}}{32}\Delta_1^4z_\ell ^2}=\frac{32{\kappa_{\ell}\kappa_{\ell+1} }}{\Delta_1^2z_\ell  C^{\frac{4}{n}}}\left ( \frac{C^\frac{4}{n}}{4} -1 + \sqrt{1-\frac{7}{16} C^{\frac{4}{n}}}\right ).
				\end{equation}
				We are then able to lower bound $1-b_n$ as $b_n\leq1$ we have
				\begin{equation*}
					1-b_n=\frac{1-b_n^2}{1+b_n}\geq \frac{1-b_n^2}{2}.
				\end{equation*}
				Then using \eqref{eq:def_b_n_eigfun} and the following minoration
				\begin{equation*}
					\begin{aligned}
						\frac{C^\frac{4}{n}}{4} -1 + \sqrt{1-\frac{7}{16} C^{\frac{4}{n}}}=\frac{1-\frac{7}{16} C^{\frac{4}{n}}-\left(1-\frac{C^\frac{4}{n}}{4}\right)^2}{1-\frac{C^\frac{4}{n}}{4} + \sqrt{1-\frac{7}{16} C^{\frac{4}{n}}}}\geq \frac{C^{\frac{4}{n}}\left(1-C^{\frac{4}{n}}\right)}{32}
					\end{aligned}
				\end{equation*}
				we obtain 
				\begin{equation*}
					1-b_n\geq \frac{\kappa_{\ell}\kappa_{\ell+1}}{2\Delta_1^2z_\ell}\left(1-C^{\frac{4}{n}}\right).
				\end{equation*}
				By Lemma \ref{lemm:trigo} and the definition of $\kappa_j$, 
				\begin{equation}\label{eq:prelast_1-b_n}
					\begin{aligned}
						1-b_n&\geq \frac{{(\lambda_{\ell,0}^*\left \| \boldsymbol{a}_{\ell} \right \|^2_{\ell_2}+\sigma^2)(\lambda_{\ell+1,0}^*\left \| \boldsymbol{a}_{\ell+1} \right \|^2_{\ell_2}+\sigma^2) }}{2\Delta_1^2\left \| \boldsymbol{a}_{\ell} \right \|^2_{\ell_2}\left \| \boldsymbol{a}_{\ell+1} \right \|^2_{\ell_2}}\left(1-C^{\frac{4}{n}}\right)\\
						&\geq  \frac{{(\lambda_{\ell,0}^*(p-1)+\sigma^2)(\lambda_{\ell+1,0}^*(p-1)+\sigma^2) }}{2\Delta_1^2(p-1)^2}\left(1-C^{\frac{4}{n}}\right)\\
						&\geq \frac{\lambda_{\ell,0}^*\lambda_{\ell+1,0}^* }{2\Delta_1^2}\left(1-C^{\frac{4}{n}}\right).
					\end{aligned}
				\end{equation}
				Then, by combining Equations \eqref{eq:Delta_1} and \eqref{eq:def_b_n_eigfun}, we obtain the following equation, 
				\begin{equation*}
					\Delta_1^2-\Delta_0\Delta_1-\frac{64\kappa_{\ell}\kappa_{\ell+1}}{z_\ell C^{\frac4n}}\left(\frac{C^{\frac4n}}4-1+\sqrt{1-\frac{7}{16}C^{\frac{4}n}}\right)=0. 
				\end{equation*}
				By solving this polynomial in $\Delta_1$, setting $\Delta_1$ to be its positive root and using Lemma \ref{lemm:trigo}) we obtain:
				\begin{equation*}
					\begin{aligned}
						\Delta_1&=\frac{1}{2}\left(\Delta_0+\sqrt{\Delta_0^2+\frac{256\kappa_{\ell}\kappa_{\ell+1} }{z_\ell C^{\frac4n}}\left(\frac{C^{\frac4n}}4-1+\sqrt{1-\frac{7}{16}C^{\frac{4}n}}\right)}\right)\\
						&= \frac{1}{2}\left(\Delta_0+\sqrt{\Delta_0^2+\frac{256{(\lambda_{\ell,0}^*\left \| \boldsymbol{a}_{\ell} \right \|^2_{\ell_2}+\sigma^2)(\lambda_{\ell+1,0}^*\left \| \boldsymbol{a}_{\ell+1} \right \|^2_{\ell_2}+\sigma^2) }}{\left \| \boldsymbol{a}_{\ell} \right \|^2_{\ell_2}\left \| \boldsymbol{a}_{\ell+1} \right \|^2_{\ell_2}C^{\frac{4}{n}}}\left(\frac{C^{\frac4n}}4-1+\sqrt{1-\frac{7}{16}C^{\frac{4}n}}\right)}\right)\\
						&= \frac{1}{2}\left(\Delta_0+\sqrt{\Delta_0^2+\frac{256{(\lambda_{\ell,0}^*(p-1)+\sigma^2)(\lambda_{\ell+1,0}^*(p-1)+\sigma^2) }}{(p-1)^2C^{\frac{4}{n}}}\left(\frac{C^{\frac4n}}4-1+\sqrt{1-\frac{7}{16}C^{\frac{4}n}}\right)}\right).
					\end{aligned}
				\end{equation*}
				Therefore, our last steps to bound from below \eqref{eq:prelast_1-b_n} which can be written as
				\begin{equation}\label{eq:last_1-b_n}
					1-b_n\geq \frac{2\lambda_{\ell,0}^*\lambda_{\ell+1,0}^* }{\left(\Delta_0+\sqrt{\Delta_0^2+\frac{256{(\lambda_{\ell,0}^*(p-1)+\sigma^2)(\lambda_{\ell+1,0}^*(p-1)+\sigma^2) }}{(p-1)^2C^{\frac{4}{n}}}\left(\frac{C^{\frac4n}}4-1+\sqrt{1-\frac{7}{16}C^{\frac{4}n}}\right)}\right)^2}\left(1-C^{\frac{4}{n}}\right),
				\end{equation}
				is to first observe that the function $f(x) = \frac{x}{4} - 1 + \sqrt{1-\frac{7}{16}x}$ is concave for $x \in [0,1]$. Since $f(1)=0$ and $f'(1) = -1/24$, the tangent inequality implies $f(x) \le \frac{1}{24}(1-x)$. Applied to $x=C^{4/n}$, this yields:
				\begin{equation}\label{eq:ineq_C^4/n}
					\frac{C^{\frac4n}}4-1+\sqrt{1-\frac{7}{16}C^{\frac{4}n}} \le \frac{1}{24}\left(1-C^{\frac{4}{n}}\right).
				\end{equation}
				Let 
				\begin{equation*}
					R :=\Delta_0^2+\frac{256{(\lambda_{\ell,0}^*(p-1)+\sigma^2)(\lambda_{\ell+1,0}^*(p-1)+\sigma^2) }}{(p-1)^2C^{\frac{4}{n}}}\left(\frac{C^{\frac4n}}4-1+\sqrt{1-\frac{7}{16}C^{\frac{4}n}}\right).
				\end{equation*}
				Substituting the Inequality \eqref{eq:ineq_C^4/n} we have:
				\begin{equation*}
					R \le \Delta_0^2 + \frac{32(\lambda_{\ell,0}^*(p-1)+\sigma^2)(\lambda_{\ell+1,0}^*(p-1)+\sigma^2)}{3(p-1)^2} \left(C^{-\frac{4}{n}} - 1\right).
				\end{equation*}
				Using the inequality $\sqrt{a^2+b} \le a + \frac{b}{2a}$ with $a=\Delta_0$, we can obtain the following upper bound:
				\begin{align*}
					(\Delta_0+\sqrt{R})^2 &\le \left( 2\Delta_0 + \frac{16(\lambda_{\ell,0}^*(p-1)+\sigma^2)(\lambda_{\ell+1,0}^*(p-1)+\sigma^2)}{3\Delta_0(p-1)^2} \left( C^{-\frac{4}{n}} - 1 \right) \right)^2 \nonumber \\
					&= 4\Delta_0^2 \left( 1 + \frac{8(\lambda_{\ell,0}^*(p-1)+\sigma^2)(\lambda_{\ell+1,0}^*(p-1)+\sigma^2)}{3\Delta_0^2(p-1)^2} \left( C^{-\frac{4}{n}} - 1 \right) \right)^2.
				\end{align*}
				Finally, Combining the last inequality with Equation \eqref{eq:last_1-b_n}:
				\begin{equation}\label{eq:LAST_1-b_n_with_n}
					1-b_n \ge \frac{ \lambda_{\ell,0}^*\lambda_{\ell+1,0}^* \left(1-C^{\frac{4}{n}}\right)}{2\Delta_0^2 \left[ 1 + \frac{8 (\lambda_{\ell,0}^*(p-1)+\sigma^2)(\lambda_{\ell+1,0}^*(p-1)+\sigma^2)}{3\Delta_0^2(p-1)^2} \left(C^{-\frac{4}{n}} - 1\right) \right]^2}.
				\end{equation}
				To obtain a lower bound independent of $n$, we set $\lambda_{\ell,0}^*=c_{L,m,\ell}\left(1+\frac{1}{\sqrt{n}}\right)$ and $\lambda_{\ell+1,0}^*=c_{L,m,\ell}$ with $c_{L,m,\ell}>0$ sufficiently small to ensure that \eqref{eq:cond_val_p_n} is satisfied.\\
				We then lower-bound the numerator terms using $\lambda_{\ell+1,0}^* \ge c_{L,m,\ell}$ and the inequality $1-C^{4/n} \ge \frac{4\ln(1/C)}{n} C^4$ (derived from $1-e^{-x} \ge xe^{-x}$ and $C^{4/n} \ge C^4$). For the denominator, we upper-bound the terms using $\lambda_{\ell,0}^* \le 2c_{L,m,\ell}$ and $(C^{-4/n}-1)/\Delta_0^2 \le \frac{4\ln(1/C)}{c_{L,m,\ell}^2} C^{-4}$ (using $e^x-1 \le xe^x$ and $C^{-4/n} \le C^{-4}$). Substituting these bounds yields:
				\begin{equation*}
					1-b_n\geq \frac{2 \ln(1/C) C^4}{\left[ 1 + \frac{32 \ln(1/C) C^{-4}}{3 c_{L,m,\ell}^2 (p-1)^2} (2c_{L,m,\ell}(p-1)+\sigma^2)(c_{L,m,\ell}(p-1)+\sigma^2) \right]^2}>0.
				\end{equation*}
				We can conclude as we ensured that with our particular choice of $b_n$ we have that
				for all $n\in\mathbb{N}$, $$A\left(P_{0}^{o b s}, P_{1}^{o b s}\right)^n\geq C>0,$$ which implies, by virtue of \eqref{eq:link_Hellinger_A}, the existence of $H_{max}^2<2$ such that \eqref{eq:cond_H} is verified. We can therefore apply Lemma \ref{lemm1} in combination with \eqref{eq:borne_prop_1_proof} and the last inequality to obtain for a constant $c>0$
				\begin{equation}\label{eq:near_lbound_eigfun}
					\inf _{\widehat{\psi}_{\ell}} \sup _{P_X \in \mathcal{P}(\alpha,L)} \mathbb{E}\left[\left\|\widehat{\psi}_{\ell}-\psi_{\ell}(\Gamma_{P_X})\right\|^{2}\right] \geq c.
				\end{equation}
				\subsection{Proof of Theorem~\ref{thm:eigfun_lb}}\label{subsec:proof_eigfun_lb}
				\begin{prop}{(Minimax lower-bound for the $\ell $-th eigenfunction estimation)}\\ \label{prop:eigfun_lb_n}
					Let $n\in\mathbb{N}\setminus\{0\},p\in\mathbb{N}$ such that $p\geq 4,\ell\in\mathbb{N}\setminus\{0\}$ such that $\ell+1<\frac{p}{2}$ and $\alpha,L,\delta_\ell\in\mathbb{R}^*_+$.
					We have that there exists a constant $c>0$ 
					such that
					$$\inf _{\widehat{\psi}_{\ell}} \sup _{P_{X} \in \mathcal{P}(\alpha,L,\delta_\ell)} \mathbb{E}\left[\left\|\widehat{\psi}_{\ell}-\psi_{\ell}(\Gamma_{P_X})\right\|^{2}\right] \geq c \frac{\delta_\ell}{n},$$
					where the infimum is taken over all measurable functions of the data.
				\end{prop}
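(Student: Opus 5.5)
We reuse the two-hypothesis construction from the proof of Theorem~\ref{thm:inconsistency}, now calibrating the rotation so that both hypotheses stay in $\mathcal P(\alpha,L,\delta_\ell)$. Concretely, we keep the trigonometric functions $a_j(t)=\sqrt2\sin(2\pi jt)$ and the two Gaussian processes $Z_0,Z_1$ of the previous proof. In $Z_0$ the eigenfunctions are $\psi^*_{j,0}=a_j$. In $Z_1$ the pair $(a_\ell,a_{\ell+1})$ is rotated by the angle whose cosine is $b_n$, exactly as in \eqref{eq:def_etas}. The eigenvalues $\lambda^*_{j,k}$ are taken small enough that \eqref{eq:cond_val_p_n} holds, which gives $P_{Z_k}\in\mathcal P(\alpha,L)$ by the same computation as before.

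The new ingredient is the spectral constraint $r_\ell\le\delta_\ell$. We choose $\lambda^*_{\ell,0}=c(1+\tau)$ and $\lambda^*_{\ell+1,0}=c$, with all other eigenvalues well separated; for instance $\lambda^*_{j,k}=c\,2^{\ell-j}$ for $j<\ell$, which makes the $(\ell-1,\ell)$ contribution to $r_\ell$ bounded by an absolute constant. Then $r_\ell(\Gamma_{P_{Z_0}})\asymp (1+\tau)/\tau^2\asymp \tau^{-2}$. We impose the matching relations \eqref{eq:setting_eigvals}, so that $\Delta_1=\Delta_0/(2b_n^2-1)$. Taking $b_n$ close to $1$ gives $\Delta_1\simeq\Delta_0$, hence $r_\ell(\Gamma_{P_{Z_1}})\asymp r_\ell(\Gamma_{P_{Z_0}})$. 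Choosing $\tau\asymp\delta_\ell^{-1/2}$, up to a constant, puts both hypotheses in $\mathcal P(\alpha,L,\delta_\ell)$. If $\delta_\ell$ is small we instead use $\tau$ of order one, and only need the bound $\gtrsim\delta_\ell/n$ with a constant.

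Since $\|\psi^*_{\ell,0}-\psi^*_{\ell,1}\|^2=2(1-b_n)$, the reduction \eqref{eq:borne_prop_1_proof} together with Lemma~\ref{lemm1} gives the lower bound $\frac12(1-b_n)\cdot c'$, provided $A(P_0^{obs},P_1^{obs})^n\ge C$. We then run the same determinant computation \eqref{eq:dets} with the block-diagonalisation in the basis $v_j$. Under \eqref{eq:setting_eigvals} the affinity depends only on the quantity
\[
x:=\frac{\Delta_1^2(1-b_n^2)\|\boldsymbol a_\ell\|^2\|\boldsymbol a_{\ell+1}\|^2}{\kappa_\ell\kappa_{\ell+1}},
\]
and it behaves like $1-c''x$ for small $x$. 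It therefore suffices to take $x\asymp 1/n$, that is,
\[
1-b_n^2\asymp \frac{\kappa_\ell\kappa_{\ell+1}}{n\,\Delta_1^2 z_\ell}\ge \frac{\lambda^*_{\ell,0}\lambda^*_{\ell+1,0}}{n\,\Delta_1^2}\asymp\frac{r_\ell}{n}\asymp\frac{\delta_\ell}{n}.
\]
This is precisely inequality \eqref{eq:LAST_1-b_n_with_n} before the $n$-dependent eigenvalue choice. Now, however, $\Delta_0$ is fixed by $\delta_\ell$ rather than sent to zero, and $1-C^{4/n}\asymp 1/n$. Hence $1-b_n\gtrsim \delta_\ell/n$, up to the factor $[1+\ldots]^{-2}$ in \eqref{eq:LAST_1-b_n_with_n}.

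The main obstacle is controlling that factor uniformly. The term $\frac{(\lambda(p-1)+\sigma^2)^2}{\Delta_0^2(p-1)^2}(C^{-4/n}-1)$ is of order $(1+\sigma^2/(cp))^2\,\delta_\ell/n$. It is bounded when $\delta_\ell\lesssim n$, and the constant $c$ is then allowed to depend on $\sigma$, consistent with $c_\sigma$ in Theorem~\ref{thm:eigfun_lb}. When $\delta_\ell\gtrsim n$, the target $\delta_\ell/n$ exceeds a constant, and since the loss is always bounded by $4$, we instead cap the bound at a constant. This case is handled by the inconsistency construction itself, with $b_n$ chosen so that $r_\ell\le\delta_\ell$ still holds. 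We must also check that $b_n^2>1/2$ remains valid, which holds once $1-b_n^2\lesssim\delta_\ell/n$ is small, and otherwise we fall back on the constant regime.
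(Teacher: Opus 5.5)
\textbf{Overall assessment.} Your route is the paper's: reuse the rotation construction of Theorem~\ref{thm:inconsistency}, impose $\lambda^*_{\ell,0}\lambda^*_{\ell+1,0}/\Delta_0^2\asymp\delta_\ell$ in \eqref{eq:LAST_1-b_n_with_n}, and use $1-C^{\pm 4/n}\asymp 1/n$. Your split at $\delta_\ell\asymp n$ is more careful than the paper's three-line sketch, for two reasons. First, the minimax risk is bounded by an absolute constant ($\widehat\psi_\ell=0$ has risk $1$), so only $c\min(1,\delta_\ell/n)$ can hold with $c$ free of $n$ and $\delta_\ell$. Second, the bracket in \eqref{eq:LAST_1-b_n_with_n} contains $\frac{8}{3}\delta_\ell(C^{-4/n}-1)\asymp\delta_\ell/n$, so that display yields order $\delta_\ell/n$ only when $\delta_\ell\lesssim n$.

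\textbf{The gap: small $\delta_\ell$.} In this regime your two hypotheses are not in $\mathcal P(\alpha,L,\delta_\ell)$, for three reasons.
\begin{enumerate}
\item With $\tau$ of order one, $r_\ell(\Gamma_{P_{Z_0}})\ge(1+\tau)/\tau^2\asymp 1>\delta_\ell$. Such a pair only lower-bounds the risk over a larger class $\mathcal P(\alpha,L,\delta)$ with $\delta\asymp 1$, and lower bounds do not transfer from a larger class to a smaller one. You need $\tau\asymp\delta_\ell^{-1}$ instead.
\item $r_\ell$ is a maximum over both neighbours, so the $(\ell-1,\ell)$ ratio must be at most $\delta_\ell$, not merely bounded by an absolute constant. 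With $\lambda^*_{\ell-1}=2c$ this ratio equals $2(1+\tau)/(1-\tau)^2\ge 2$. So the constraint fails for every $\delta_\ell<2$ whatever $\tau$ is, and for $\tau\ge 1$ even the ordering $\lambda^*_{\ell-1}>\lambda^*_{\ell,0}$ breaks.
\item Your check of $Z_1$ via $\Delta_1\simeq\Delta_0$ only controls the right ratio. That ratio in fact decreases, since $\lambda^*_{\ell,k}+\lambda^*_{\ell+1,k}$ does not depend on $k$ and $\Delta_1\ge\Delta_0$. But by \eqref{eq:setting_eigvals}, $\lambda^*_{\ell,1}=\lambda^*_{\ell,0}+\Delta_0\frac{1-b_n^2}{2b_n^2-1}$ and $\lambda^*_{\ell+1,1}=\lambda^*_{\ell+1,0}-\Delta_0\frac{1-b_n^2}{2b_n^2-1}$. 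So the left ratio of $Z_1$ is worse than that of $Z_0$, and positivity of $\lambda^*_{\ell+1,1}$ is an extra constraint on $b_n$ that you do not mention.
\end{enumerate}

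\textbf{Repair.} Use the ``suitable exponential decay'' the paper invokes (compare the calibration of $\gamma_\ell$ in Proposition~\ref{prop:eigfun_lb_p}).
\begin{itemize}
\item Take $\lambda^*_{j,0}=c\rho^{-j}$ for $j\le\ell+1$, with $\rho>1$ solving $\rho/(\rho-1)^2=\delta_\ell/2$ and $c$ small enough for \eqref{eq:cond_val_p_n}. Both neighbour ratios of $Z_0$ then equal $\delta_\ell/2$, and $\lambda^*_{\ell,0}\lambda^*_{\ell+1,0}/\Delta_0^2=\delta_\ell/2$.
\item Set $1-b_n^2=\min\{c_0\kappa_\ell\kappa_{\ell+1}/(n\Delta_0^2 z_\ell),\,c_1\min(1,1/(\rho-1))\}$ with small absolute constants $c_0,c_1$.
\item Then $b_n^2\ge 3/4$, and $\Delta_1\le 2\Delta_0$ keeps the affinity bounded below. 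Also $\lambda^*_{\ell+1,1}>0$ because $\lambda^*_{\ell+1,0}/\Delta_0=1/(\rho-1)$, and the left ratio of $Z_1$ stays below $\delta_\ell$.
\item Finally, $\kappa_\ell\kappa_{\ell+1}/z_\ell\ge\lambda^*_{\ell,0}\lambda^*_{\ell+1,0}$ and $1/(\rho-1)=\delta_\ell(\rho-1)/(2\rho)\ge\min(1,\delta_\ell/4)$.
\end{itemize}
Together these give $1-b_n\gtrsim\min(1,\delta_\ell/n)$ for every $\delta_\ell>0$. This single construction also covers $\delta_\ell\gtrsim n$, so neither the bracket of \eqref{eq:LAST_1-b_n_with_n} nor the switch to the inconsistency construction is needed.
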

				\begin{proof}
					The proof of Proposition~\ref{prop:eigfun_lb_n} follows the same steps as that of Theorem~\ref{thm:inconsistency} up to Equation~\eqref{eq:LAST_1-b_n_with_n}. From that point on, we impose
					$$\frac{\lambda^*_{\ell+1,0}\lambda^*_{\ell,0}}{\Delta_0^2} = \delta_\ell,$$
					by choosing the eigenvalues to exhibit a suitable exponential decay. Finally, the fact that $C^{4/n}$ and $C^{-4/n}$ converge to 1 at rate $1/n$ yields the desired result.
				\end{proof}
				
				\begin{prop}{(Minimax lower-bound for the $\ell $-th eigenfunction estimation)}\\\label{prop:eigfun_lb_p}
					Let $n\in\mathbb{N}\setminus\{0\},\alpha,L\in\mathbb{R}^*_+$. Let $p\in\mathbb{N}$ such that $p\geq \max\left(\left(2+\sqrt 2\right)^{\frac{1}{2\alpha}},4\right)$, $\ell\in\mathbb{N}\setminus\{0\}$ such that $ \ell+1<\frac{p}{2}$ and $\delta_\ell\in\mathbb{R}^*_+$.
					We have that there exists a constant $c>0$ only depending on $\alpha$ such that, 
					$$\inf _{\widehat{\psi}_{\ell}} \sup _{P_{X} \in \mathcal{P}(\alpha,L,\delta_\ell)} \mathbb{E}\left[\left\|\widehat{\psi}_{\ell}-\psi_{\ell}(\Gamma_{P_X})\right\|^{2}\right] \geq cp^{-2\alpha},$$
					where the infimum is taken over all measurable functions of the data.
				\end{prop}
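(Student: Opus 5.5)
The plan is an aliasing two-point argument, in the spirit of the minimax aliasing model of Section~\ref{sec:simu}. We build two processes whose distributions of the grid observations $(X(t_1),\dots,X(t_p))$ coincide exactly, while their $\ell$-th eigenfunctions differ in $\mathbb{L}^2$ by order $p^{-\alpha}$. Because the observation laws $(P_0^{\mathrm{obs}})^{\otimes n}$ and $(P_1^{\mathrm{obs}})^{\otimes n}$ are identical, the Hellinger distance in \eqref{eq:cond_H} is zero. The two-hypothesis reduction leading to \eqref{eq:borne_prop_1_proof}, combined with Lemma~\ref{lemm1}, then gives a risk of at least $\frac14\|\psi_{\ell,0}^*-\psi_{\ell,1}^*\|^2\cdot\frac12$ for every $n$. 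So it suffices to make this squared distance of order $p^{-2\alpha}$ while keeping both processes in $\mathcal P(\alpha,L,\delta_\ell)$.

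\textbf{Construction.} Take Gaussian processes $Z_k=\sum_{j=1}^{\ell+1}\sqrt{\lambda_j^*}\xi_j\psi_{j,k}^*$ for $k=0,1$, with the same eigenvalues $\lambda_1^*>\dots>\lambda_{\ell+1}^*$ under both hypotheses. Choose these eigenvalues small enough that condition \eqref{eq:cond_val_p_n} holds, and with the ratio $\lambda_\ell^*\lambda_{\ell\pm1}^*/(\lambda_\ell^*-\lambda_{\ell\pm1}^*)^2\le\delta_\ell$. This is possible for any $\delta_\ell>0$, for instance with a geometric sequence of large ratio, so $r_\ell\le\delta_\ell$ under both hypotheses.

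For $k=0$, take $\psi_{j,0}^*=a_j$ with $a_j(x)=\sqrt2\sin(2\pi jx)$ as in the proof of Theorem~\ref{thm:inconsistency}. For $k=1$, keep $\psi_{j,1}^*=a_j$ for $j\neq\ell$ and set
\[
\psi_{\ell,1}^*=\sqrt{1-\rho^2}\,a_\ell+\rho\, g,\qquad g(x)=\sqrt2\sin(2\pi pMx),
\]
with $\rho=c\,p^{-\alpha}$ and $M\ge1$ an integer. Then $g$ vanishes on the grid $t_j=(j-1)/p$, is orthogonal to $a_1,\dots,a_{\ell+1}$ (its frequency $pM$ exceeds $\ell+1<p/2$), and has unit norm.

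Since $g(t_j)=0$, the grid vectors of $\psi^*_{\ell,1}$ equal $\sqrt{1-\rho^2}\,\boldsymbol a_\ell$. This changes the diagonal entry by a factor $1-\rho^2$, so I rescale $\lambda_{\ell,1}^*=\lambda_\ell^*/(1-\rho^2)$, exactly as in the simulation model. The grid covariances $G_0$ and $G_1$ of \eqref{eq:def_matrice_obs} then coincide. The eigenvalues of $Z_1$ change by a relative factor $1+O(p^{-2\alpha})$, so the ordering is preserved. The condition $p^{2\alpha}\ge 2+\sqrt2$ is what keeps $\rho^2$ bounded away from $1$, which bounds $1/(1-\rho^2)$ and the change in $r_\ell$. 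I would absorb this slack into the initial choice of eigenvalues, for instance by targeting $r_\ell\le\delta_\ell/2$ under hypothesis $0$.

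\textbf{Smoothness.} This step is the main obstacle. The term $\lambda_{\ell,1}^*\rho^2 g(s)g(t)$ and the cross terms $\lambda_{\ell,1}^*\rho\sqrt{1-\rho^2}\,a_\ell(t)g(s)$ must satisfy the Hölder condition in $\mathcal P(\alpha,L)$. Lemma~\ref{lemm:trigo} gives the bound $4(2\pi pM)^\alpha|u-u'|^\beta$ for the Hölder seminorm of $g^{(m)}$. The cross term therefore contributes $\lambda_\ell^*\rho(2\pi pM)^\alpha\lesssim\lambda_\ell^* c(2\pi M)^\alpha$, which is bounded independently of $p$ thanks to $\rho=cp^{-\alpha}$. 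The quadratic term is even smaller. Fixing $M=1$ and choosing $c$ depending only on $\alpha$ and on the eigenvalues (scaled by $L$) keeps all four Hölder bounds below $L$.

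\textbf{Conclusion.} We obtain $\|\psi_{\ell,0}^*-\psi_{\ell,1}^*\|^2=(1-\sqrt{1-\rho^2})^2+\rho^2\ge\rho^2=c^2p^{-2\alpha}$. Since the two observation laws are equal, any estimator has $\max_k\mathbb{E}\|\widehat\psi_\ell-\psi_{\ell,k}^*\|^2\ge\frac18\rho^2$, which gives the claim with a constant depending on $\alpha$. The eigenvalue scale can also be normalized in terms of $L$ alone.

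The sign ambiguity of eigenfunctions is harmless. One can check that $\psi_{\ell,0}^*$ and $-\psi_{\ell,1}^*$ are also at distance of order one, so the minimum over signs is still at least of order $\rho^2$.
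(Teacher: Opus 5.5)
Your proposal is correct and is essentially the paper's argument. The paper likewise adds a $\sqrt2\sin(2\pi p\,\cdot)$ component of amplitude of order $p^{-\alpha}$ that vanishes on the grid, and inflates the $\ell$-th eigenvalue by $(1-\rho^2)^{-1}$ so that $G_0=G_1$ and the Hellinger distance is zero; the only cosmetic difference is that it perturbs the constant eigenfunction $\psi^*_{\ell,0}=1$ in a model with only $\ell$ modes, whereas you perturb $a_\ell$ and add an $(\ell+1)$-th mode, which also covers $\ell=1$ directly.

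To get the constant in the required form, fix $c$ first (e.g.\ $c=1$) and then shrink the overall eigenvalue scale, which leaves $r_\ell$ unchanged, instead of letting $c$ depend on the eigenvalues; the final constant is then independent of $\ell$, $\delta_\ell$ and $L$.
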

				\begin{proof}
					Let $n\in \mathbb{N}\setminus\{0\},L\in\mathbb{R}^*_+$, $\beta\in(0,1]$ and $m\in\mathbb{N}$ and define $\alpha=m+\beta$. Let $p\in\mathbb{N}$ such that $p\geq \max\left(\left(2+\sqrt 2\right)^{\frac{1}{2\alpha}},4\right)$, $\ell\in\mathbb{N}$ such that $ 2\leq\ell+1<\frac{p}{2}$ (the case $\ell=1$ can be treated in the similar way), let $\delta_\ell\in\mathbb{R}^*_+$ and let $r_p\in(0,1/2]$ that will be calibrated later on.\\
					For $k\in\{0,1\},j\in[\![1,\ell -1]\!],t \in[0,1]$, let
					\begin{gather*}
						\psi_{j,k}^{*}(t):=a_{j}(t),\quad \psi_{\ell,0}^{*}(t):=1,
						\quad \psi_{\ell,1}^{*}(t):=1-r_p+\sqrt{1-(1-r_p)^2}a_{p}(t), 
					\end{gather*}
					with $a_j(t) := \sqrt 2\sin(2\pi j t),\, a_{p}(t):=\sqrt 2\sin(2\pi p t),$ as in the proof of Proposition~\ref{thm:inconsistency}, which ensures $ \left<\psi_{j,k}^*,\psi_{j',k}^* \right>_{\mathbb{L}_2\left([0,1]\right)}=\mathbf{1}_{j=j'}$.
					
					We introduce for all $k\in\{0,1\}$, $j\in[\![1,\ell ]\!]$, $\xi_{j,k}\overset{i.i.d.}{\sim}  \mathcal{N}\left ( 0,1 \right )$ and for all $t,s\in [0,1]$:
					\begin{equation*}
						Z_k(t):=\sum_{j=1}^{\ell }\sqrt{\lambda_{j,k}^*}\xi_{j,k} \psi_{j,k}^*(t)\quad\text{ and }\quad
						K_k:=\mathbb{E}\left[Z_k(s) Z_k(t)\right]=\sum_{j=1}^{\ell }\lambda_{j,k}^*\psi_{j,k}^*(t)\psi_{j,k}^*(s)
					\end{equation*}
					with
					\begin{gather*}
						\lambda_{1,0}^*>\dots>\lambda_{\ell-1,0}^*>\lambda_{\ell,0}^*>0,\\
						\lambda_{\ell,1}^*:=(1-r_p)^{-2}\lambda_{\ell,0}^*
						\text{ and for all } j\in[\![1,\ell-1 ]\!]\quad \lambda_{j,1}^*:=\lambda_{j,0}^*. 
					\end{gather*}
					To this end, set $\lambda_{j,0}^* := c_{\alpha,L,\ell}e^{-\gamma_\ell j}$, $c_{\alpha,L,\ell}>0$ will be set later in the proof. First we calibrate $\gamma_\ell>0$ to ensure that the constraint 
					\begin{equation*}
						r_\ell\left(\Gamma_{Z_k}\right)\leq \delta_\ell,
					\end{equation*}
					with $r_\ell\left(\Gamma_{Z_k}\right)$ defined as in \eqref{eq:relative_eigengap}.\\
					As $0<r_p\leq \frac{1}{2}$ we have $\lambda_{\ell,1}^*>\lambda_{\ell,0}^*$ and thus $r_\ell\left(\Gamma_{Z_0}\right)\leq r_\ell\left(\Gamma_{Z_1}\right)$, thus to ensure that the constraint is satisfied it suffices to ensure
					\begin{equation*}
						r_\ell\left(\Gamma_{Z_1}\right)=\frac{e^{-\gamma_\ell(\ell-1)} e^{-\gamma_\ell\ell} (1-r_p)^{-2}}{\left(e^{-\gamma_\ell(\ell-1)} - e^{-\gamma_\ell\ell}(1-r_p)^{-2}\right)^2} \leq \delta_\ell.
					\end{equation*}
					Multiplying the numerator and denominator by $e^{2\gamma_\ell\ell}$ and defining $C_{r_p} = (1-r_p)^{-2}$, this simplifies to $\frac{C_{r_p} e^{\gamma_{\ell}}}{(e^{\gamma_{\ell}} - C_{r_p})^2} \leq \delta_\ell$. Rearranging for $e^{\gamma_\ell} > C_{r_p}$ yields the quadratic inequality $$e^{2\gamma_\ell} - C_{r_p}(2 + \delta_\ell^{-1})e^{\gamma_\ell} + C_{r_p}^2 \geq 0.$$ 
					Now since the larger root of the polynomial $x\longmapsto x^2-C_{r_p}(2 + \delta_\ell^{-1})x+C_{r_p}^2$ is $x_+=C_{r_p}\Big(1+\frac{1+\sqrt{1+4\delta_\ell}}{2\delta_\ell}\Big)$
					and since $C_{r_p} = (1-r_p)^{-2}$, we obtain that $\gamma_\ell$ must satisfy:
					\begin{equation}\label{ineq:gamma_ell}
						\gamma_\ell \geq \log\left( \frac{1}{(1-r_p)^2} \right) + \log\left( 1 + \frac{1 + \sqrt{1 + 4\delta_\ell}}{2\delta_\ell} \right).
					\end{equation}
					Note that since $r_p \in (0, 1/2]$,  choosing 
					\begin{equation}\label{eq:gamma_ell_geq_log4}
						\gamma_\ell =\log(4) + \log\left( 1 + \frac{1 + \sqrt{1 + 4\delta_\ell}}{2\delta_\ell} \right)
					\end{equation} 
					ensures \eqref{ineq:gamma_ell}.
					To verify that both distributions of $Z_0$ and $Z_1$ belongs to the regularity class $\mathcal P(\alpha,L,\delta_\ell)$ we then ensure that the corresponding kernels are $(\alpha,L)-$Hölder.\\ 
					We proceed in the same way as in the proof of Theorem \ref{thm:inconsistency}. By definition of $\psi_{j,0}^*$ and Lemma~\ref{lemm:trigo}, 
					\begin{eqnarray*}
						\left| \frac{\partial^m K_0}{\partial u^m}(u,t) - \frac{\partial^m K_0}{\partial u^m}(u',t)\right| &=& \left|\sum_{j=1}^{\ell-1}\lambda_{j,0}^*\left(a_j^{(m)}(u)-a_j^{(m)}(u')\right)a_j(t)\right|\\
						&\leq& 2\sqrt{2}(2\pi)^{\alpha}\sum_{j=1}^{\ell-1}\lambda_{j,0}^*j^\alpha|u-u'|^{\beta}\leq \frac{L}2|u-u'|^{\beta}, 
					\end{eqnarray*}
					by choosing 
					$$c_{\alpha,L,\ell}:=\frac{L}{4\sqrt{2}(2\pi)^{\alpha}\sum_{j=1}^{\ell-1}\exp(-\gamma_\ell j)j^\alpha}.$$
					Similarly, we assess the H\"older regularity of the kernel $K_1$ by bounding the variations of its $m$-th derivative:
					\begin{align*}
						\left| \frac{\partial^m K_1}{\partial u^m}(u,t) - \frac{\partial^m K_1}{\partial u^m}(u',t)\right| 
						&\leq \sum_{j=1}^{\ell -1} \lambda_{j,0}^* \left| a_j(t) \right| \left| a_j^{(m)}(u) - a_j^{(m)}(u') \right| \\
						&\quad + \lambda_{\ell,1}^* \left| \psi_{\ell,1}^*(t) \right| \left| \frac{\partial^m \psi_{\ell,1}^*}{\partial u^m}(u) - \frac{\partial^m \psi_{\ell,1}^*}{\partial u^m}(u') \right|.
					\end{align*}
					For the first term, using again the global bound $|a_j(t)| \leq \sqrt{2}$, the H\"older bound $|a_j^{(m)}(u) - a_j^{(m)}(u')| \leq 2\sqrt{2}(2\pi j)^\alpha |u-u'|^\beta$, and the definition of $c_{\alpha,L,\ell}$, we obtain exact control over the partial sum:
					\begin{equation*}
						\sum_{j=1}^{\ell -1} c_{\alpha,L,\ell} e^{-\gamma_\ell j} 2(2\pi j)^\alpha |u-u'|^\beta 
						= \frac{L}{2}|u-u'|^\beta.
					\end{equation*}
					For the second term, we recall that $\lambda_{\ell,1}^* = (1-r_p)^{-2}\lambda_{\ell,0}^*$ and $|\psi_{\ell,1}^*(t)| \leq 1-r_p + \sqrt{2}\sqrt{1-(1-r_p)^2} \leq 2$. Furthermore, the choice $r_p := p^{-2\alpha}$ implies $\sqrt{1-(1-r_p)^2} \leq \sqrt{2}p^{-\alpha}$, which neutralizes the growth of the derivative of $a_{p}$:
					\begin{align*}
						\lambda_{\ell,1}^* \left| \psi_{\ell,1}^*(t) \right| \left| \frac{\partial^m \psi_{\ell,1}^*}{\partial u^m}(u) - \frac{\partial^m \psi_{\ell,1}^*}{\partial u^m}(u') \right|
						&\leq \frac{2\lambda_{\ell,0}^*}{(1-r_p)^2} \cdot \sqrt{1-(1-r_p)^2} \cdot \sqrt{2}(2\pi p)^\alpha |u-u'|^\beta \\
						&\leq \frac{2\lambda_{\ell,0}^*}{(1-r_p)^2} \left( \sqrt{2}p^{-\alpha} \right) \left( \sqrt{2}(2\pi)^\alpha p^\alpha \right) |u-u'|^\beta \\
						&= \frac{4(2\pi)^\alpha}{(1-r_p)^2} \lambda_{\ell,0}^* |u-u'|^\beta.
					\end{align*}
					
					Substituting the definition $\lambda_{\ell,0}^* = c_{\alpha,L,\ell}e^{-\gamma_\ell \ell}$ and the chosen value for $c_{\alpha,L,\ell}$ into the required bound $\lambda_{\ell,0}^* \leq \frac{(1-r_p)^2}{8(2\pi)^\alpha}L$, we obtain the equivalent condition:
					\begin{equation*}
						\frac{L e^{-\gamma_\ell \ell}}{4\sqrt{2}(2\pi)^\alpha \sum_{j=1}^{\ell-1}e^{-\gamma_\ell j}j^\alpha} \leq \frac{(1-r_p)^2 L}{8(2\pi)^\alpha}
						\quad \iff \quad
						\frac{\sqrt{2} e^{-\gamma_\ell \ell}}{\sum_{j=1}^{\ell-1}e^{-\gamma_\ell j}j^\alpha} \leq (1-r_p)^2.
					\end{equation*}
					Since $\gamma_\ell \geq \log(4)$ from Equation \eqref{eq:gamma_ell_geq_log4}, we have $e^{-\gamma_\ell}\leq 1/4$. Bounding the denominator by its term at $j=\ell-1$ (noting that $(\ell-1)^\alpha \geq 1$), the left-hand side is at most $\sqrt{2}e^{-\gamma_\ell} \leq 1/2$. Thus, the inequality holds as soon as $(1-r_p)^2 \geq 1/2$, or equivalently $r_p \leq 1 - 1/\sqrt{2}$. With $r_p = (p-1)^{-2\alpha}$, this is ensured whenever $p^{2\alpha} \geq 2+\sqrt{2}$, which is satisfied as we assumed $p\geq \left(2+\sqrt 2\right)^{\frac{1}{2\alpha}}$.
					
					In the end we obtain 
					\begin{equation*}
						\left| \frac{\partial^m K_1}{\partial u^m}(u,t) - \frac{\partial^m K_1}{\partial u^m}(u',t)\right|\leq L|u-u'|^{\beta}.
					\end{equation*}
					Therefore we have
					\begin{equation}\label{eq:borne_inf_proof_p}
						\inf _{\widehat{\psi}_{\ell}} \sup _{P_{X} \in \mathcal{P}(\alpha,L,\delta_\ell)} \mathbb{E}\left[\left\|\widehat{\psi}_{\ell}-\psi_{\ell}(\Gamma_{P_X})\right\|^{2}\right] \geq \inf _{\hat{\psi}_{\ell}} \sup _{k=0,1} \mathbb{E}\left[\left\|\widehat{\psi}_{\ell}-\psi_{\ell, k}^{*}\right\|^2\right].
					\end{equation}
					Lets first focus on the control of $\inf _{\hat{\psi}_{\ell}} \sup _{k=0,1} \mathbb{E}\left[\left\|\widehat{\psi}_{\ell}-\psi_{\ell, k}^{*}\right\|^{2}\right]$.
					
					Let $\widehat{\psi}_{\ell}$ an estimator of the $\ell$-th eigenfunction and $\widehat{\phi}$ the minimum distance test defined by
					$$
					\widehat{\phi}:=\arg \min _{k=0,1}\left\|\widehat{\psi}_{\ell}-\psi_{\ell, k}^{*}\right\|^{2},
					$$
					we have for $j=0,1$,
					$$
					\left\|\widehat{\psi}_{\ell}-\psi_{\ell, k}^{*}\right\| \geq \frac{1}{2}\left\|\psi_{\ell, \widehat{\phi}}^{*}-\psi_{\ell, k}^{*}\right\|,
					$$
					Using Lemma \ref{lemm:trigo}, we get that
					$$
					\begin{aligned}
						\left\|\psi_{\ell, \widehat{\phi}}^{*}-\psi_{\ell, k}^{*}\right\|^{2} &=\mathbf{1}_{\{\widehat{\phi} \neq k\}}\left\|\psi_{\ell,0}^{*}-\psi_{\ell,1}^{*}\right\|^{2}\\
						&=\mathbf{1}_{\{\widehat{\phi} \neq k\}}\left\|r_p-\sqrt{1-(1-r_p)^2}a_{p}(\cdot)\right\|^{2}\\
						&=\mathbf{1}_{\{\widehat{\phi} \neq k\}}2r_p= 2 p^{-2\alpha}\mathbf{1}_{\{\widehat{\phi} \neq k\}}.
					\end{aligned}
					$$
					
					Then,
					\begin{equation}\label{eq:lb_eigfunc_proof2}
						\inf _{\widehat{\psi}_{\ell}} \sup _{P_{X} \in \mathcal{P}(\alpha,L,\delta_\ell)} \mathbb{E}\left[\left\|\widehat{\psi}_{\ell}-\psi_{\ell}^{*}\right\|^{2}\right] \geq 
						\frac{1}{2} p^{-2\alpha} \inf _{\widehat{\phi}} \max _{k=0,1} \mathbb{P}(\widehat{\phi} \neq k),  
					\end{equation}
					where the minimum in the right-hand side is taken over all test functions i.e. all measurable function $\widehat\phi:\mathbb R^{n\times p}\to \{0,1\}$ of the observations.  
					
					Therefore we now try to bound from below the quantity $\mathbb{P}(\widehat{\phi} \neq k)$. For this purpose we will use Lemma \ref{lemm1}. Our purpose is then to show that there exists $H_{max}^2<2$ such that
					\begin{equation*}
						H^{2}\left(\left(P_{0}^{o b s}\right)^{\otimes n},\left(P_{1}^{o b s}\right)^{\otimes n}\right) \leq H_{\max }^{2}
					\end{equation*}
					where $P_{k}^{o b s}$ is the law of the random vector $\mathbf{Y}_k^{ o b s}:=\left(Y_{k}\left(t_{0}\right), \ldots, Y_{k}\left(t_{p-1}\right)\right)$ such that
					$$
					Y_{k}\left(t_{\ell}\right)=Z_{k}\left(t_{\ell}\right)+\varepsilon_{k,\ell}
					$$
					with $\varepsilon_{k,\ell} \overset{i.i.d.}{\sim} \mathcal{N}\left(0, \sigma^{2}\right)$ for $\ell\in[\![0,p-1]\!]$ and $\sigma^2>0$ fixed.\\
					Then, in this case, we have
					\begin{equation*}
						\inf _{\widehat{\psi}} \max _{k=0,1} \mathbb{P}(\widehat{\psi} \neq k) \geq \frac{1}{2}\left(1-\sqrt{H_{\max }^{2}\left(1-H_{\max }^{2} / 4\right)}\right)>0.
					\end{equation*}
					In our case, we remark that:
					$$\mathbf{Y}_k^{ o b s}\sim \mathcal{N}(0,G_k),$$
					with
					\begin{equation*}
						\left[G_k\right]_{\ell',k'}=\mathbb{E}\left[Y_{k}\left(t_{\ell'}\right) Y_{k}\left(t_{k'}\right)\right]=\sum_{j=1}^\ell \lambda_{j,k}^*\psi_{j,k}^*(t_{k'})\psi_{j,k}^*(t_{\ell'})+\mathbf{1}_{\ell'=k'}\sigma^2.
					\end{equation*}
					Our construction ensures that $a_{p}(t_j)=\sqrt 2\sin(2\pi (j-1))=0$ as $t_j:=\frac{j-1}{p}$ for all $1\leq j \leq p$. This implies that $\lambda_{\ell,0}^*\psi_{\ell,0}^*(t_{k'})\psi_{\ell,0}^*(t_{\ell'})=\lambda_{\ell,1}^*\psi_{\ell,1}^*(t_{k'})\psi_{\ell,1}^*(t_{\ell'})$ and thus $G_0=G_1$.\\
					The distributions of the discrete observations are then the same for the two processes we have chosen, so
					$$
					H^{2}\left(\left(P_{0}^{o b s}\right)^{\otimes n},\left(P_{1}^{o b s}\right)^{\otimes n}\right)=0.
					$$
					We can therefore apply Lemma \ref{lemm1} in combination with \eqref{eq:lb_eigfunc_proof2} and obtain the expected result. 
				\end{proof}
				\subsection{Proof of Theorem \ref{Thm_eigval}}\label{subsec:proof_eigval_lb}
				\begin{prop}{(Minimax lower-bound for the $\ell $-th eigenvalue estimation in regards of the sample size)}{\label{prop:eigval_lb_n}\\
						Let $n\in\mathbb{N}\setminus\{0\},p\in\mathbb{N}$ such that $p\geq 4,\ell\in\mathbb{N}\setminus\{0\}$ such that $\ell+1<\frac{p}{2}$ and $\alpha,L\in\mathbb{R}^*_+$.
						Then, we have the existence of $c>0$ that depends only on $\sigma^2$ such that
						$$\inf _{\widehat{\lambda}_{\ell}} \sup _{P_{X} \in \mathcal{P}(\alpha, L)} \mathbb{E}\left[\left(\frac{\widehat{\lambda}_{\ell}-\lambda_{\ell}(\Gamma_{P_X})}{\lambda_{\ell}(\Gamma_{P_X})}\right)^{2}\right] \geq  \frac{c}{n},$$
						where the infimum is taken over all measurable functions of the data.
					}
				\end{prop}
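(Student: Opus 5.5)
The plan is Le Cam's two-point method, with two hypotheses that differ only by a scaling of the $\ell$-th eigenvalue. The eigenfunctions stay fixed, so the observed Gaussian covariance matrices differ only along one direction, and the Hellinger affinity can be computed explicitly from Lemma~\ref{lemm2}.

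\textbf{Construction.} I would reuse the trigonometric system $a_j(t)=\sqrt2\sin(2\pi j t)$ from the proof of Theorem~\ref{thm:inconsistency}. For $k\in\{0,1\}$ set
\[
Z_k=\sum_{j=1}^{\ell}\sqrt{\lambda^*_{j,k}}\,\xi_j a_j ,
\]
where $\lambda^*_{j,0}=\lambda^*_{j,1}$ for $j<\ell$, and
\[
\lambda^*_{\ell,0}=\lambda,\qquad \lambda^*_{\ell,1}=\lambda\,(1+\tau/\sqrt n),
\]
with $\tau\in(0,1]$ a small constant. All $\lambda^*_{j,k}$ are chosen small and strictly decreasing in $j$; exponential decay with ratio at most $1/4$ keeps the ordering intact after the perturbation. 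As in \eqref{eq:cond_val_p_n}, the Hölder bound from Lemma~\ref{lemm:trigo} gives $P_{Z_k}\in\mathcal P(\alpha,L)$ once the constant in front of the eigenvalues is small enough in terms of $L,\alpha,\ell$.

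\textbf{Reduction to testing.} Set $\lambda_k=\lambda^*_{\ell,k}$. The two targets are separated in relative terms: $|\lambda_1-\lambda_0|\ge \tau\lambda/\sqrt n$. For any estimator $\widehat\lambda_\ell$, let $\widehat\phi$ be the closer of the two values. Then, for $k\in\{0,1\}$,
\[
\Big(\frac{\widehat\lambda_\ell-\lambda_k}{\lambda_k}\Big)^2\;\ge\; \frac{\tau^2}{16\,n}\,\mathbf 1_{\{\widehat\phi\neq k\}},
\]
using $\lambda_k\le 2\lambda$. Lemma~\ref{lemm1} then gives the bound $\tfrac{c}{n}$, provided the $n$-fold Hellinger distance between the two observation laws stays below some $H_{\max}^2<2$ uniformly in $n$.

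\textbf{Affinity computation.} Let $v=\mathbf a_\ell/\|\mathbf a_\ell\|_{\ell_2}$, which by Lemma~\ref{lemm:trigo} is orthogonal to the other $\mathbf a_j$ on the grid. Write $s=\|\mathbf a_\ell\|_{\ell_2}^2$, which is of order $p$. The observation covariances are
\[
G_1=G_0+\lambda\,\frac{\tau}{\sqrt n}\, s\, vv^t .
\]
Both matrices are diagonal in the basis $V$ used before, and they differ in a single eigenvalue: $\kappa_0=\lambda s+\sigma^2$ versus $\kappa_1=\kappa_0+\lambda\tau s/\sqrt n$. By Lemma~\ref{lemm2},
\[
A^n=\left(\frac{(\kappa_0\kappa_1)^{1/4}}{\big((\kappa_0+\kappa_1)/2\big)^{1/2}}\right)^n .
\]
Put $x=\kappa_1/\kappa_0-1\le \tau/\sqrt n$. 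Then $\log\big[(1+x)^{1/4}/(1+x/2)^{1/2}\big]\ge -x^2/16$ for $x\in[0,1]$. Hence $A^n\ge e^{-\tau^2/16}$, and by \eqref{eq:link_Hellinger_A} this gives $H^2\le 2-2e^{-\tau^2/16}<2$ uniformly in $n$ and $p$.

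The bound is actually independent of $\sigma$, since adding noise only shrinks $x$, so the constant $c$ can in fact be taken free of $\sigma^2$. The main obstacle I expect is bookkeeping rather than substance. The perturbation $(1+\tau/\sqrt n)$ has to be shown to keep $\lambda^*_{\ell,1}<\lambda^*_{\ell-1,1}$, so that index $\ell$ still picks out the perturbed eigenvalue. It also has to keep the Hölder constant below $L$. Both follow from the exponential-decay choice with factor $4$, exactly as in the proof of Proposition~\ref{prop:eigfun_lb_p}, since $\tau\le1$ at most doubles $\lambda^*_{\ell,1}$.
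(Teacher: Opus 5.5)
Your proposal is correct and follows essentially the paper's argument: the same two-point construction on the trigonometric system with fixed eigenfunctions, a relative perturbation of size $n^{-1/2}$ of the $\ell$-th eigenvalue, the same reduction to a test, and the same simultaneous diagonalization of $G_0,G_1$ to evaluate the Gaussian affinity via Lemma~\ref{lemm2}. The only differences are cosmetic. You bound the affinity through the elementary inequality $\log\bigl[(1+x)^{1/4}(1+x/2)^{-1/2}\bigr]\ge -x^2/16$ instead of solving the paper's quadratic condition in $b_n$, and you explicitly check that the perturbed eigenvalue stays below $\lambda^*_{\ell-1}$, a point the paper leaves implicit.
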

				
				\begin{proof}{
						Let $n\in\mathbb{N}\setminus\{0\},p\in\mathbb{N}$ such that $p\geq 4,\ell\in\mathbb{N}$ such that $ 2\leq\ell+1<\frac{p}{2}$ (the case $\ell=1$ can be treated in a similar way). Let $L\in\mathbb{R}^*_+$, $\beta\in(0,1]$, $m\in\mathbb{N}$ and $\alpha=m+\beta$.
						We introduce for $j\geq $1,
						\[
						\begin{aligned}
							a_j : [0,1] &\longrightarrow [-\sqrt{2},\sqrt{2}] \\
							x &\longmapsto \sqrt{2}\sin(2\pi j x).
						\end{aligned}
						\]
						For $k\in\{0,1\}$ and $t\in[0,1]$, let the processes be defined by the following Karhunen-Loève expansion with eigenfunctions $\psi_j^*:=a_j$
						\begin{equation*}
							Z_k(t):=\sum_{j=1}^{\ell -1}\sqrt{\lambda_{j}^*}\xi_{j,k}\psi_{j}^*(t) + \sqrt{\lambda_{\ell,k}^*}\xi_{\ell,k}\psi_{\ell}(t)=\sum_{j=1}^{\ell -1}\sqrt{\lambda_{j}^*}\xi_{j,k} a_{j}(t) + \sqrt{\lambda_{\ell,k}^*}\xi_{\ell,k}a_{\ell}(t),\\
						\end{equation*}
						with $\sigma^2\in \mathbb{R}_+^*, \{\xi_{j,k}\}_{j=1,\hdots,\ell}\overset{i.i.d.}{\sim}\mathcal{N}(0,\sigma^2)$ and
						\begin{gather*}
							\lambda_{1,0}^*>\dots>\lambda_{\ell-1,0}^*>\lambda_{\ell,0}^*>0\\
							\lambda_{\ell,1}^*:=\lambda_{\ell,0}^*+b_n\text{ and for all } j\in[\![1,\ell -1]\!]\quad \lambda^*_j:=\lambda_{j,0}^*,
						\end{gather*}
						with $b_n:=\frac{\lambda_{\ell,0}^*}{\sqrt{n}}$.
						
						As for all $(t,u)\in[0,1]^2$
						$$K_k(u,t):=\mathbb{E}\left[Z_k(t)Z_k(u)\right]=\sum_{j=1}^{\ell -1}\lambda_{j}^* \psi_{j}^*(t)\psi_{j}^*(u)+\lambda_{\ell,k}^* \psi_{\ell}(t)\psi_{\ell}(u)$$ 
						we get that for any given $(t,u)\in]0,1[^2$,
						$$\frac{\partial^m K_k}{(\partial u)^m}(u,t)=\sum_{j=1}^{\ell -1}\lambda_{j}^* \psi_{j}^*(t)\frac{\partial^m \psi^*_{j}}{(\partial u)^m}(u)+\lambda_{\ell,k}^* \psi_{\ell}(t)\frac{\partial^m \psi^*_{\ell}}{(\partial u)^m}(u).$$
						Hence we have the following inequalities (with $(u,u',t)\in ]0,1[^3$):
						\begin{align*}
							\left| \frac{\partial^m K_k}{(\partial u)^m}(u,t) - \frac{\partial^m K_k}{(\partial u)^m}(u',t)\right|
							&\leq \sum_{j=1}^{\ell -1}\lambda_{j}^*\left | \frac{\partial^m \psi^*_{j}}{(\partial u)^m}(u)-\frac{\partial^m \psi^*_{j}}{(\partial u)^m}(u') \right |+\lambda_{\ell,k}^*\left | \frac{\partial^m \psi^*_{\ell}}{(\partial u)^m}(u)-\frac{\partial^m \psi^*_{\ell}}{(\partial u)^m}(u') \right |\\
							&\leq 2\sqrt{2}\left (\sum_{j=1}^{\ell -1}(2\pi j)^{\alpha}\lambda_{j}^* +(2\pi \ell)^{\alpha}\left ( \lambda_{\ell,0}^*+b_n \right )  \right )\left | u-u' \right |^{\beta},
						\end{align*}
						where the second inequality is obtained by Lemma \ref{lemm:trigo}.
						
						Thus, by taking $\lambda_1^*,\dots,\lambda_{\ell-1}^*$ and $\lambda_{\ell,0}^*$ sufficiently small such that
						$$2\sqrt{2}\left (\sum_{j=1}^{\ell -1}(2\pi j)^{\alpha}\lambda_{j}^* +(2\pi \ell)^{\alpha}\lambda_{\ell,0}^*  \right )\leq \frac{L}{2}$$
						It is sufficient to guarantee that
						\begin{equation}\label{eq:valp_
								cond_bn}
							b_n\leq \frac{L}{4\sqrt{2}(2\pi\ell)^\alpha},
						\end{equation}
						which can be achieved by choosing $\lambda_{\ell,0}^*$ small enough. In this way, we have that $P_{Z_k} \in \mathcal{P}(\alpha,L)$ for all $k\in\{0,1\},$  hence
						$$\inf _{\widehat{\lambda}_{\ell}} \sup _{P_X \in \mathcal{P}(\alpha,L)} \mathbb{E}\left[\left(\frac{\widehat{\lambda}_{\ell}-\lambda_{\ell}(\Gamma_{P_X})}{\lambda_{\ell}(\Gamma_{P_X})}\right)^{2}\right] \geq \inf _{\hat{\lambda}_{\ell}} \sup _{k=0,1} \mathbb{E}\left[\left(\frac{\widehat{\lambda}_{\ell}-\lambda_{\ell,k}^{*}}{\lambda_{\ell,k}^{*}}\right)^{2}\right].$$
						
						Let $\widehat{\lambda}_{\ell}$ be an estimator of $\lambda_{\ell,k}^*$ and 
						$\widehat{\phi}$ defined by
						$$
						\widehat{\phi}:=\arg \min _{k=0,1}\left(\widehat{\lambda}_{\ell}-\lambda_{\ell, k}^{*}\right)^{2}.
						$$
						Then, for $k=0,1$,
						\[
						\left|\widehat{\lambda}_{\ell}-\lambda_{\ell, k}^{*}\right|
						\geq \frac{1}{2}\left|\lambda_{\ell, \widehat{\phi}}^{*}-\lambda_{\ell, k}^{*}\right|.
						\]
						Hence,
						\begin{align*}
							\left(\frac{\widehat{\lambda}_{\ell}-\lambda_{\ell, k}^{*}}{\lambda_{\ell,k}^{*}}\right)^{2}
							\geq 
							\frac{1}{4}\left(\frac{\lambda_{\ell, \widehat{\phi}}^{*}-\lambda_{\ell, k}^{*}}{\lambda_{\ell,k}^{*}}\right)^{2} 
							=
							\frac{1}{4}\mathbf{1}_{\{\widehat{\phi} \neq k\}}
							\left(\frac{b_n}{\lambda_{\ell,k}^{*}} \right)^2 
							\ge 
							\frac{1}{4}\mathbf{1}_{\{\widehat{\phi} \neq k\}}
							\left(\frac{b_n}{\lambda_{\ell,1}^{*}} \right)^2,
						\end{align*}
						since $\lambda_{\ell,1}^{*}\ge \lambda_{\ell,k}^{*}$. Furthermore, we have:
						\begin{equation*}
							\begin{aligned}
								\mathbf{1}_{\{\widehat{\phi} \neq k\}}\left(\frac{b_n}{\lambda_{\ell,1}^{*}} \right)^2&=\mathbf{1}_{\{\widehat{\phi} \neq k\}}\left(\frac{b_n}{\left(\lambda_{\ell,0}^{*}+b_n\right)} \right)^2\\
								&        =\mathbf{1}_{\{\widehat{\phi} \neq k\}}\left(\frac{1}{\left(\frac{\lambda_{\ell,0}^{*}}{b_n}+1\right)} \right)^2
								=\mathbf{1}_{\{\widehat{\phi} \neq k\}}\left(\frac{1}{\left(\sqrt n+1\right)} \right)^2 \geq \mathbf{1}_{\{\widehat{\phi} \neq k\}}\frac{1}{4n}
							\end{aligned}
						\end{equation*}
						Therefore
						\begin{equation}\label{eq:lb_eigval_cn}
							\inf _{\widehat{\lambda}_{\ell}} \sup _{P_X \in \mathcal{P}(\alpha,L)} \mathbb{E}\left[\left(\frac{\widehat{\lambda}_{\ell}-\lambda_{\ell}^{*}}{\lambda_{\ell}^{*}}\right)^{2}\right] \geq \frac{1}{16n}  \inf _{\widehat{\phi}} \max _{k=0,1} \mathbb{P}(\widehat{\phi} \neq k).
						\end{equation}
						
						Then, let $P_{k}^{o b s}$ be the law of the random observation vector $\mathbf{Y}_k^{ o b s}:=\left(Y_{k}\left(t_{1}\right), \ldots, Y_{k}\left(t_{p}\right)\right)$ such that
						$$
						Y_{k}\left(t_{k'}\right)=Z_{k}\left(t_{k'}\right)+\varepsilon_{k'}
						$$
						with $k'\in[\![1,p]\!],t_{k'}:=\frac{k'-1}{p}$,$\varepsilon_{k'} \overset{i.i.d.}{\sim} \mathcal{N}\left(0, \sigma^{2}\right)$ and $\sigma^2>0$ fixed.\\
						Then, in this case, we have
						$$\mathbf{Y}_k^{ o b s}\sim \mathcal{N}(0,G_k),$$
						with a covariance matrix that can be computed with the help of Lemma \ref{lemm:trigo}
						\begin{equation*}\label{eq:def_matrice_obs_prop2}
							G_k =\sum_{j=1}^{\ell -1}\lambda_j^*\boldsymbol{a}_j\boldsymbol{a}_j^t+\lambda_{\ell,k}^*\boldsymbol{a}_\ell \boldsymbol{a}_\ell ^t+\sigma^2I_p
						\end{equation*}
						where $\boldsymbol{a}_j:=\left(a_j\left(t_{1}\right), \ldots, a_j\left(t_{p}\right)\right)^{t}$ for all $j\in[\![1,\ell ]\!]$ and $I_p$ is the identity matrix of dimension $p$.
						
						As $P_{0}^{o b s}$ and $P_{1}^{o b s}$ are Gaussian with equal mean vectors, we have an explicit formula for the Hellinger affinity by using Lemma \ref{lemm2}, we can then rely on \eqref{eq:def_affinite_gauss}.
						
						Once again, we compute the determinants of $G_0G_1$ and $\frac{G_0+G_1}{2}$.
						
						Let us set $v_j:=\frac{\boldsymbol{a}_j}{\sqrt p}$ for all $j\in[\![1,\ell ]\!]$ so that, by Lemma~\ref{lemm:trigo}, $v_1,\hdots,v_\ell$ be an orthonormal family of vectors of $\mathbb R^p$. Remark that, by Lemma~\ref{lemm:trigo} again, for all $j\in[\![1,\ell ]\!]$, 
						$$
						G_k v_j = \lambda_j^*pv_j+\sigma^2 v_j, 
						$$
						so that $v_j$ is an eigenvector of $G_k$ with associated eigenvalue 
						$$ 
						\kappa_{j,k}:=\lambda_{j,k}^*p+\sigma^2. 
						$$ 
						
						We complete the family $(v_1,\hdots,v_\ell)$ with $v_{\ell+1}, \ldots, v_{p}$ so that $(v_1,\hdots,v_p)$ forms an orthonormal basis of $\mathbb{R}^p$ and define the orthogonal matrix
						$$
						V:=\left[v_{1} , v_{2} , \cdots , v_{p}\right] .
						$$
						We set, for the sake of readability,
						$$ \kappa_{j,k}:=\lambda_{j,k}^*p\mathbf{1}_{j\in[\![1,\ell ]\!]}+\sigma^2$$
						so that 
						$$G_kv_j=\kappa_{j,k} v_j, \qquad j=1,\hdots,p.$$
						We define $G_0'$ and $G_1'$ as the matrices such that $G_0=VG_0'V^t$ and $G_1=VG_1'V^t$. 
						
						Remark that 
						\[
						\kappa_{j,0}=\kappa_{j,1}, \text{ for }j< \ell, \kappa_{\ell,1}=\kappa_{\ell,0}+p b_n \text{ and }\kappa_{j,0}=\kappa_{j,1}=\sigma^2\text{ for }j>\ell. 
						\]
						
						We then obtain
						$$\left [ G_0'G_1' \right ]_{i,j}=\kappa_{i,0}\kappa_{j,1}\mathbf{1}_{i=j}=\begin{cases}
							\kappa_{i,0}^2 & \text{ if } j=i<\ell  \\
							\kappa_{\ell,0}^2+pb_n\kappa_{\ell,0} & \text{ if } j=i=\ell  \\
							\sigma^4 & \text{ if } j=i>\ell  \\
							0 & \text{ else }
						\end{cases}$$
						and
						$$\left [ \frac{G_0'+G_1'}{2} \right ]_{i,j}=\begin{cases}
							\kappa_{i,0}  & \text{ if } j=i<\ell  \\
							\kappa_{\ell,0}+p\frac{b_n}{2}& \text{ if } j=i=\ell  \\
							\sigma^2 & \text{ if } j=i>\ell  \\
							0 & \text{ else }
						\end{cases}.$$
						Then, we compute the determinants 
						$$\operatorname{det}\left ( G_0G_1 \right )=\operatorname{det}\left ( G_0'G_1' \right )=\sigma^{4(p-\ell)}\left ( \kappa_{\ell,0}^2+pb_n\kappa_{\ell,0} \right )\prod_{i=1}^{\ell -1}\kappa_{i,0}^2$$
						and
						$$\operatorname{det}\left ( \frac{G_0+G_1}{2} \right )=\operatorname{det}\left ( \frac{G_0'+G_1'}{2} \right )=\sigma^{2(p-\ell)}\left ( \kappa_{\ell,0}+p\frac{b_n}{2}\right )\prod_{i=1}^{\ell -1}\kappa_{i,0}.$$
						As a result, to ensure that the affinity is bounded from below away from zero it suffices to have $1>C>0$ such that
						\begin{equation*}
							\begin{aligned}
								A\left(P_{0}^{o b s}, P_{1}^{o b s}\right)^n=&\left (\frac{\left (\kappa_{\ell,0}^2+pb_n\kappa_{\ell,0}\right )^{\frac{1}{4}}}{\left ( \kappa_{\ell,0}+p\frac{b_n}{2} \right )^{\frac{1}{2}}}  \right )^n\geq C\\
								\Leftrightarrow\quad\quad &\left (C^{\frac{4}{n}}-1  \right )\kappa_{\ell,0}^2+\left (C^{\frac{4}{n}}-1  \right )\kappa_{\ell,0}pb_n+\frac{C^{\frac{4}{n}}}{4}p^2b_n^2 \leq 0.\\
							\end{aligned}
						\end{equation*}
						Since $0<C<1$, we have $C^{\frac{4}{n}}-1<0$. In particular, for any $b_n\ge 0$,
						\[
						\left (C^{\frac{4}{n}}-1  \right )\kappa_{\ell,0}p b_n \le 0,
						\]
						and therefore
						\[
						\left (C^{\frac{4}{n}}-1  \right )\kappa_{\ell,0}^2
						+\left (C^{\frac{4}{n}}-1  \right )\kappa_{\ell,0}pb_n
						+\frac{C^{\frac{4}{n}}}{4}p^2b_n^2 \le 
						\left (C^{\frac{4}{n}}-1  \right )\kappa_{\ell,0}^2
						+\frac{C^{\frac{4}{n}}}{4}p^2b_n^2.
						\]
						Hence, the desired inequality holds as soon as
						\[
						b_n^2 \le 
						\frac{4\left(1-C^{\frac{4}{n}}\right)}{C^{\frac{4}{n}}}\,
						\frac{\kappa_{\ell,0}^2}{p^2}.
						\]
						By definition of $\kappa_{\ell,0}$, 
						\[
						b_n^2 \le 
						\frac{4\left(1-C^{\frac{4}{n}}\right)}{C^{\frac{4}{n}}}\,
						\frac{(\lambda_{\ell,0}^*p+\sigma^2)^2}{p^2}.
						\]
						With the choice $b_n=\lambda_{\ell,0}^*/\sqrt n$, this is equivalent to 
						$$1\leq 4nC^{-\frac{4}{n}}\left (1-C^{\frac{4}{n}}\right )\left(\frac{\lambda_{\ell,0}^*p+\sigma^2}{\lambda_{\ell,0}^*p}\right)^2;$$
						Since
						$$
						1<\left(\frac{\lambda_{\ell,0}^*p+\sigma^2}{\lambda_{\ell,0}^*p}\right)^2. 
						$$
						it is sufficient to find $C\in (0,1)$ such that 
						$$
						1\leq 4nC^{-\frac{4}{n}}\left (1-C^{\frac{4}{n}}\right )
						$$
						which is possible by taking $C\in (0,1)$ small enough since 
						$$\lim_{n\to+\infty}4nC^{-\frac{4}{n}}\left (1-C^{\frac{4}{n}}\right )=-16\log C.$$

						Thus, we have that
						for all $n\in\mathbb{N},A\left(P_{0}^{o b s}, P_{1}^{o b s}\right)^n\geq C >0$ which implies, by virtue of \eqref{eq:link_Hellinger_A}, the existence of $H_{max}^2<2$ such that \eqref{eq:cond_H} is verified. We can therefore apply Lemma \ref{lemm1} in combination with \eqref{eq:lb_eigval_cn} and the last inequality to obtain for a given $C_0>0$ that does not depend on the parameters of the problem
						\begin{equation*}
							\inf _{\widehat{\lambda}_{\ell}} \sup _{P_X \in \mathcal{P}(\alpha, L)} \mathbb{E}\left[\left(\frac{\widehat{\lambda}_{\ell}-\lambda_{\ell}(\Gamma_{P_X})}{\lambda_{\ell}(\Gamma_{P_X})}\right)^{2}\right] \geq \frac{C_0}{n}.
						\end{equation*}
					}
				\end{proof}
				
				\begin{prop}{(Minimax lower-bound for the $\ell $-th eigenvalue estimation in regards on the grid size)}{\label{prop:eigval_lb_p}\\
						Let $n\in\mathbb{N}\setminus\{0\},\alpha,L\in\mathbb{R}^*_+$. Let $p\in\mathbb{N}$ such that $p\geq \max\left(\left(2+\sqrt 2\right)^{\frac{1}{2\alpha}},4\right),\ell\in\mathbb{N}\setminus\{0\}$ such that $ \ell+1<\frac{p}{2}$ and $\delta_\ell\in\mathbb{R}^*_+$.
						Then we have the existence of $c>0$ that does not depend on $\ell,\sigma^2$ such that
						$$\inf _{\widehat{\lambda}_{\ell}} \sup _{P_{X} \in \mathcal{P}(\alpha, L)} \mathbb{E}\left[\left(\frac{\widehat{\lambda}_{\ell}-\lambda_{\ell}(\Gamma_{P_X})}{\lambda_{\ell}(\Gamma_{P_X})}\right)^{2}\right] \geq \frac{c}{p^{4\alpha}},$$
						where the infimum is taken over all measurable functions of the data.
					}
				\end{prop}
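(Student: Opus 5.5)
We reuse the aliasing construction from the proof of Proposition~\ref{prop:eigfun_lb_p}, with the parameters recalibrated so that the two hypotheses differ in their $\ell$-th eigenvalue rather than only in their eigenfunction. Take $\psi_{j}^*=a_j$ for $j\le \ell-1$ with common eigenvalues $\lambda_j^*$ for both hypotheses. Set $\psi_{\ell,0}^*\equiv 1$ with eigenvalue $\lambda_{\ell,0}^*$, and
\[
\psi_{\ell,1}^*:=(1-r_p)+\sqrt{1-(1-r_p)^2}\,a_p, \qquad \lambda_{\ell,1}^*:=(1-r_p)^{-2}\lambda_{\ell,0}^*, \qquad r_p:=p^{-2\alpha}.
\]
Since $a_p$ vanishes on the grid $t_j=(j-1)/p$, the discretized covariance matrices coincide: $G_0=G_1$. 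Consequently the laws of the $n$ observed noisy vectors are identical, and the Hellinger distance in Lemma~\ref{lemm1} vanishes for every $n$.

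\textbf{Membership in the class.} We must check that $P_{Z_0},P_{Z_1}\in\mathcal P(\alpha,L)$. This is exactly the H\"older computation already done in the proof of Proposition~\ref{prop:eigfun_lb_p}. The factor $\sqrt{1-(1-r_p)^2}\le\sqrt2\,p^{-\alpha}$ absorbs the growth $(2\pi p)^\alpha$ of the $m$-th derivative increments of $a_p$. It then suffices to choose the $\lambda_j^*$ and $\lambda_{\ell,0}^*$ small, for instance $c_{\alpha,L,\ell}e^{-\gamma j}$ with $\gamma\ge\log 4$. The condition $p^{2\alpha}\ge 2+\sqrt2$ guarantees $(1-r_p)^2\ge 1/2$, which keeps $\lambda_{\ell,1}^*\le 2\lambda_{\ell,0}^*$ and preserves the strict ordering $\lambda_{\ell-1}^*>\lambda_{\ell,1}^*$. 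The ordering is what makes $\lambda_{\ell,k}^*$ genuinely the $\ell$-th eigenvalue. Since the statement only involves $\mathcal P(\alpha,L)$, no eigengap constraint is needed, although the same choice would also respect $\delta_\ell$.

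\textbf{Reduction to testing.} Next we reduce to a two-point test as in the proof of Proposition~\ref{prop:eigval_lb_n}. Let $\widehat\phi:=\arg\min_k|\widehat\lambda_\ell-\lambda_{\ell,k}^*|$. Then
\[
\Bigl(\frac{\widehat\lambda_\ell-\lambda^*_{\ell,k}}{\lambda^*_{\ell,k}}\Bigr)^2\ge \frac14\,\mathbf 1_{\{\widehat\phi\ne k\}}\Bigl(\frac{\lambda_{\ell,1}^*-\lambda_{\ell,0}^*}{\lambda_{\ell,1}^*}\Bigr)^2=\frac14\,\mathbf 1_{\{\widehat\phi\ne k\}}\bigl(1-(1-r_p)^2\bigr)^2 .
\]
Since $1-(1-r_p)^2=r_p(2-r_p)\ge r_p$, the right-hand side is at least $\frac14 p^{-4\alpha}\mathbf 1_{\{\widehat\phi\ne k\}}$. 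Because the two observation laws coincide, $\max_k\mathbb P(\widehat\phi\ne k)\ge 1/2$ for any test; this follows directly or from Lemma~\ref{lemm1} with $H=0$. Combining the two bounds yields the claim with $c=1/8$, a constant independent of $\ell$ and $\sigma^2$.

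\textbf{Main obstacle.} The delicate point is the H\"older verification for $K_1$, where the term $\lambda_{\ell,1}^*\psi_{\ell,1}^*(t)\,\partial^m\psi_{\ell,1}^*(u)$ must stay below the budget $L/2$. Since it is identical to the computation in Proposition~\ref{prop:eigfun_lb_p}, we expect it to carry over verbatim.

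A final remark explains the rate: the gap between eigenvalues is relative of order $r_p$, which is squared in the loss, while the eigenfunction perturbation cost only $r_p$. That is why this bound is $p^{-4\alpha}$ rather than $p^{-2\alpha}$.
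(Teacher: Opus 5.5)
Your proposal is correct and follows essentially the same route as the paper. You reuse the aliasing pair from Proposition~\ref{prop:eigfun_lb_p} with $\lambda_{\ell,1}^*=(1-r_p)^{-2}\lambda_{\ell,0}^*$, apply the same minimum-distance reduction, and use the coincidence of the observation laws. The only differences are cosmetic: you use $r_p(2-r_p)\ge r_p$ instead of the paper's $\ge \frac32 r_p$, and you conclude directly from $\max_k\mathbb{P}(\widehat\phi\neq k)\ge 1/2$, which gives the explicit constant $c=1/8$.
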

				
				\begin{proof}
					We follow the exact same construction as in the proof of Proposition \ref{prop:eigfun_lb_p}. But this time as we are interested in the eigenvalues some steps change.\\
					Namely, \eqref{eq:borne_inf_proof_p} becomes 
					$$\inf _{\hat{\lambda}_{\ell}} \sup _{P_X \in \mathcal{P}(\alpha,L)} \mathbb{E}\left[\left(\frac{\widehat{\lambda}_{\ell}-\lambda_{\ell}(\Gamma_{P_X})}{\lambda_{\ell}(\Gamma_{P_X})}\right)^{2}\right] \geq \inf _{\hat{\lambda}_{\ell}} \sup _{k=0,1} \mathbb{E}\left[\left(\frac{\widehat{\lambda}_{\ell}-\lambda_{\ell, k}^{*}}{\lambda_{\ell, k}^{*}}\right)^2\right].$$
					Then we note that 
					by letting $\widehat{\lambda}_{\ell}$ be an estimator and $\widehat{\phi}$ defined by
					$$
					\widehat{\phi}:=\arg \min _{k=0,1}\left(\widehat{\lambda}_{\ell}-\lambda_{\ell, k}^{*}\right)^{2},
					$$
					we have for $k=0,1$,
					$$
					\left|\widehat{\lambda}_{\ell}-\lambda_{\ell, k}^{*}\right| \geq \frac{1}{2}\left|\lambda_{\ell, \widehat{\phi}}^{*}-\lambda_{\ell, k}^{*}\right|.
					$$
					Now, we use:
					\[
					\left(\frac{\lambda_{\ell,\widehat\phi}^*-\lambda_{\ell,k}^*}{\lambda_{\ell,k}^*}\right)^2
					\;\ge\;
					\mathbb 1_{\widehat\phi\neq k}
					\left(\frac{\lambda_{\ell,1}^*-\lambda_{\ell,0}^*}{\lambda_{\ell,k}^*}\right)^2.
					\]
					
					Since $\lambda_{\ell,1}^*=\lambda_{\ell,0}^*(1-r_p)^{-2}$, we can normalize by $\lambda_{\ell,1}^*$ to obtain
					\begin{align*}
						\left(\frac{\lambda_{\ell,1}^*-\lambda_{\ell,0}^*}{\lambda_{\ell,k}^*}\right)^2
						&\ge
						\frac{(\lambda_{\ell,0}^*)^2\bigl(1-(1-r_p)^{-2}\bigr)^2}{(\lambda_{\ell,1}^*)^2}=
						(1-r_p)^4\bigl(1-(1-r_p)^{-2}\bigr)^2=
						r_p^2(2-r_p)^2 .
					\end{align*}
					
					Finally, using $r_p\le \frac 12$, we get $2-r_p\ge \frac32$, so
					\[
					r_p^2(2-r_p)^2 \;\ge\; \frac{9}{4}\,r_p^2.
					\]
					
					Combining the displays and recalling $r_p=p^{-2\alpha}$, we obtain
					\[
					\left(\frac{\lambda_{\ell,\widehat\phi}^*-\lambda_{\ell,k}^*}{\lambda_{\ell,k}^*}\right)^2
					\;\ge\;
					\frac{9}{4}\,\mathbb 1_{\widehat\phi\neq k}\,r_p^2
					=
					\frac{9}{4}\,\mathbb 1_{\widehat\phi\neq k}\,p^{-4\alpha},
					\]
					and conclude in the same way that in Proposition \ref{prop:eigfun_lb_p} by observing that the samples generated by the two processes coincide in law.
				\end{proof}
				\subsection{Proof of Theorem~\ref{thm:UB}}\label{subsec:Proof_thm_UB}
				The assumption on  the eigenvalues implies that the associated eigenspaces are all of dimension~1, which allows us to use Lemma \ref{lemm4}. With $\|\cdot\|_{\mathcal{L}}$ the operator norm, we then have:
				\begin{small}
					\begin{equation}\label{eq:bornesup1}
						\begin{split}
							\mathbb{E}\left[\left\|\widehat{\psi}_{\ell,\phi}-\psi_{\ell, \pm}\right\|^2\right] &\leq   \frac{8}{\operatorname{min}_{k\neq \ell} \left|\lambda_{k}-\lambda_{\ell} \right|^2}\mathbb{E}\left[\left\|\widehat{\Gamma_{\widetilde{\Pi}_p}}-\Gamma\right\|^2_{\mathcal{L}}\right]\\
							&\leq \frac{8}{\operatorname{min}_{k\neq \ell} \left|\lambda_{k}-\lambda_{\ell}\right|^2}\mathbb{E}\left[ \left \|\widehat{\Gamma_{\widetilde{\Pi}_p}}-\Gamma_{\widetilde{\Pi}_p}+\Gamma_{\widetilde{\Pi}_p}-\Gamma_{\Pi_p}+\Gamma_{\Pi_p}-\Gamma  \right \|^2_{\mathcal{L}}\right ]\\
							&\leq \frac{24}{\operatorname{min}_{k\neq \ell} \left|\lambda_{k}-\lambda_{\ell}\right|^2}\left( \mathbb{E}\left[\left \|\widehat{\Gamma_{\widetilde{\Pi}_p}}-\Gamma_{\widetilde{\Pi}_p}  \right \|^2_{\mathcal{L}}\right ]+\left \|\Gamma_{\widetilde{\Pi}_p}-\Gamma_{\Pi_p}  \right \|^2_{\mathcal{L}}+\left \|\Gamma_{\Pi_p}-\Gamma  \right \|^2_{\mathcal{L}}\right),
						\end{split}
					\end{equation}
				\end{small}
				with $\Gamma,\Gamma_{\Pi_p},\Gamma_{\widetilde{\Pi}_p},\widehat{\Gamma_{\widetilde{\Pi}_p}}$ the respective integral operators (defined in the same way as in \eqref{eq:def_Gamma}) of $K,K_{\Pi_p},K_{\widetilde{\Pi}_p},\widehat{K_{\widetilde{\Pi}_p}}$ defined in \eqref{eq:def_K} and in Section~\ref{def_estim}).
				We take advantage of the fact that these are Hilbert-Schmidt operators with integral kernels to bound these operator norms by $\mathbb{L}_2([0,1]^2)$ norms
				\begin{equation}\label{eq:bornesup2}
					\begin{aligned}
						\mathbb{E}\left[\left \|\widehat{\Gamma_{\widetilde{\Pi}_p}}-\Gamma_{\widetilde{\Pi}_p}  \right \|^2_{\mathcal{L}}\right ]&\leq\mathbb{E}\left[\left \|\widehat{\Gamma_{\widetilde{\Pi}_p}}-\Gamma_{\widetilde{\Pi}_p}  \right \|^2_{\operatorname{HS}}\right ]=\mathbb{E}\left[\left \|\widehat{K_{\widetilde{\Pi}_p}}-K_{\widetilde{\Pi}_p}  \right \|^2_{\mathbb{L}_2([0,1]^2)}\right ],\\
						\left \|\Gamma_{\widetilde{\Pi}_p}-\Gamma_{\Pi_p}  \right \|^2_{\mathcal{L}}&\leq \left \|\Gamma_{\widetilde{\Pi}_p}-\Gamma_{\Pi_p}  \right \|^2_{\operatorname{HS}}=\left \|K_{\widetilde{\Pi}_p}-K_{\Pi_p}  \right \|^2_{\mathbb{L}_2([0,1]^2)},\\
						\left \|\Gamma_{\Pi_p}-\Gamma  \right \|^2_{\mathcal{L}}&\leq \left \|\Gamma_{\Pi_p}-\Gamma  \right \|^2_{\operatorname{HS}}= \left \|K_{\Pi_p}-K  \right \|^2_{\mathbb{L}_2([0,1]^2)}.
					\end{aligned}
				\end{equation}
				In \eqref{eq:bornesup2}, $\|\cdot\|_{\operatorname{HS}}$ denotes the Hilbert-Schmidt norm.
				The first term is relative to a stochastic error, the second one is a quadrature error term and the last one is an approximation error.
				
				Assuming that $p$ is a dyadic integer, we denote $j=\log_2(p)\in {\mathbb N}$. Denoting $V_j^2$ the space spanned by the functions $(s,t)\longmapsto \phi_{j,k}(s)\phi_{j,k'}(t)$, and $K_j$ the projection of $K$ on $V_j^2$, we obtain:
				\begin{align*}
					K_j(s,t)&=\sum_{k=0}^{p-1}\sum_{k'=0}^{p-1}\langle K,\phi_{j,k}\otimes\phi_{j,k'}\rangle\phi_{j,k}(s)\phi_{j,k'}(t)\\
					&=\sum_{k=0}^{p-1}\sum_{k'=0}^{p-1}\Big(\iint K(u,v)\phi_{j,k}(u)\phi_{j,k'}(v)dudv\Big)\phi_{j,k}(s)\phi_{j,k'}(t)\\
					&=\sum_{k=0}^{p-1}\sum_{k'=0}^{p-1}\Big(\iint \E[X(u)X(v)]\phi_{j,k}(u)\phi_{j,k'}(v)dudv\Big)\phi_{j,k}(s)\phi_{j,k'}(t)\\
					&=\sum_{k=0}^{p-1}\sum_{k'=0}^{p-1}\E\Big[\int X(u)\phi_{j,k}(u)du\int X(v)\phi_{j,k'}(v)dv\Big]\phi_{j,k}(s)\phi_{j,k'}(t)\\
					&=\sum_{k=0}^{p-1}\sum_{k'=0}^{p-1}\E\Big[\langle X,\phi_{j,k}\rangle\langle X,\phi_{j,k'}\rangle\Big]\phi_{j,k}(s)\phi_{j,k'}(t)\\
					&=\E\Big[\Pi_j(X)(s)\Pi_j(X)(t)\Big],
				\end{align*}
				with 
				$$\Pi_j(X)(\cdot):=\sum_{k=0}^{p-1}\langle X,\phi_{j,k}\rangle\phi_{j,k}(\cdot)$$
				denoting the projection of $X$ on $V_j$ the space spanned by the functions $(\phi_{j,k})_k$. It means that $K_j=K_{\Pi_p}$ and using Lemma~4 of \cite{MR3620733}, we have for $T$ a constant depending on $\alpha$ and $\phi$,
				\begin{equation}\label{eq:err_approx}
					\begin{aligned}
						\left \|K_{\Pi_p}-K  \right \|^2_{\mathbb{L}_2([0,1]^2)}&=\left \|K_j-K  \right \|^2_{\mathbb{L}_2([0,1]^2)}\\
						&\leq TL^2 2^{-2j\alpha}=TL^2 p^{-2\alpha}.
					\end{aligned}
				\end{equation}
				
				Also as
				$$ \widetilde{\Pi}_p(X):=\sum_{k=0}^{p-1}p^{-\frac{1}{2}}X\left(\frac{k}{p}\right)\phi_{j,k},$$
				we have
				\begin{align*}
					K_{\widetilde{\Pi}_p}(s,t) &=\mathbb{E}\left [ \widetilde{\Pi_p}(X)(s) \widetilde{\Pi_p}(X)(t) \right ]\\
					&=\frac{1}{p}\sum_{k=0}^{p-1}\sum_{k'=0}^{p-1}\mathbb{E}\left[X\left(\frac{k}{p}\right)X\left(\frac{k'}{p}\right)\right]\phi_{j,k}(s)\phi_{j,k'}(t)\\
					&=\frac{1}{p}\sum_{k=0}^{p-1}\sum_{k'=0}^{p-1}K\left(\frac{k}{p},\frac{k'}{p}\right)\phi_{j,k}(s)\phi_{j,k'}(t).
				\end{align*}
				We have:
				\begin{align*}
					\big|K_{\widetilde{\Pi}_p}(s,t)-K_{\Pi_p}(s,t)\big|
					&= \Bigg|\frac{1}{p}\sum_{k=0}^{p-1}\sum_{k'=0}^{p-1}
					K\left(\frac{k}{p},\frac{k'}{p}\right)\phi_{j,k}(s)\phi_{j,k'}(t)
					-\sum_{k=0}^{p-1}\sum_{k'=0}^{p-1}\langle K,\phi_{j,k}\otimes\phi_{j,k'}\rangle
					\phi_{j,k}(s)\phi_{j,k'}(t) \Bigg| \\
					&\leq\frac{1}{p}\sum_{k=0}^{p-1}\sum_{k'=0}^{p-1}
					\Bigg|K\left(\frac{k}{p},\frac{k'}{p}\right)-p
					\langle K,\phi_{j,k}\otimes\phi_{j,k'}\rangle\Bigg|
					\,|\phi_{j,k}(s)|\,|\phi_{j,k'}(t)|.
				\end{align*}
				
				By Lemma \ref{lem:approx_scal_prod} (with $\alpha=m+\beta$),
				\[
				\Big|K\left(\frac{k}{p},\frac{k'}{p}\right)-p
				\langle K,\phi_{j,k}\otimes\phi_{j,k'}\rangle\Big|
				\le CL p^{-\alpha}.
				\]
				Moreover, since $K$ is continuous on $[0,1]^2$, it is bounded; hence
				\[
				\big|\langle K,\phi_{j,k}\otimes\phi_{j,k'}\rangle\big|
				\le \|K\|_\infty \|\phi_{j,k}\|_1\|\phi_{j,k'}\|_1
				\le C_0\,p^{-1},
				\]
				for a constant $C_0$ depending only on $K$ and $\phi$.
				Therefore, there exists a constant $C'>0$ such that
				\begin{align*}
					\big|K_{\widetilde{\Pi}_p}(s,t)-K_{\Pi_p}(s,t)\big|
					&\leq C' L p^{-\alpha-1}
					\sum_{k=0}^{p-1}\sum_{k'=0}^{p-1}|\phi_{j,k}(s)|\,|\phi_{j,k'}(t)|.
				\end{align*}
				Using the scaling property $\phi_{j,k}(x) = \sqrt{p}\phi(2^j x - k)$, we have:
				\begin{equation*}
					\big|K_{\widetilde{\Pi}_p}(s,t)-K_{\Pi_p}(s,t)\big| \leq CLp^{-\alpha}\sum_{k=0}^{p-1}\sum_{k'=0}^{p-1}|\phi(2^js-k)||\phi(2^jt-k')|.
				\end{equation*}
				Now, since $\phi$ is compactly supported (assume that its support is included into $[-A,A]$, with $0<A<\infty$), we have that for any $j$ and any $s$
				that
				\begin{align*}
					\sum_{k=0}^{p-1}|\phi(2^js-k)|&=\sum_{k\in\mathbb{Z}: \ |2^js-k|\leq A}|\phi(2^js-k)|\\
					&\leq\sum_{k\in\mathbb{Z}: \ |2^js-k|\leq A}\|\phi\|_{\infty}
					\leq (2A+1)\|\phi\|_{\infty},
				\end{align*}
				where we have used that the number of non-vanishing terms in the sum is smaller than $2A+1$.
				Therefore, there exists a constant $\tilde C$ only depending on $\phi$ and $\alpha$ such that
				\begin{equation}\label{eq:err_quadrature}
					\left \|K_{\widetilde{\Pi}_p}-K_{\Pi_p}  \right \|^2_{\mathbb{L}_{2}([0,1]^2)}\leq \tilde C^2 L^2p^{-2\alpha}.
				\end{equation}
				We now control the stochastic error term. 
				First, note that for any $(s,t)\in[0,1]$, since the $(Y_i)_{i=1}^n$  are i.i.d.  
				\begin{equation*}
					\begin{aligned}
						\mathbb{E}\!\left[ \frac{1}{n}\sum_{i=1}^n 
						\widetilde{\Pi_p}(Y_i)(s)\,\widetilde{\Pi_p}(Y_i)(t) \right]
						&= \mathbb{E}\!\left[ \widetilde{\Pi_p}(Y_1)(s)\,\widetilde{\Pi_p}(Y_1)(t) \right]
						\\
						&= \mathbb{E}\!\left[ \widetilde{\Pi_p}(X_1+\varepsilon_{1,\cdot})(s)\,
						\widetilde{\Pi_p}(X_1+\varepsilon_{1,\cdot})(t) \right]\\
						&= K_{\widetilde{\Pi}_p}(s,t)+ \mathbb{E}\!\left[
						\Big( p^{-\frac12}\!\!\sum_{k=0}^{p-1}\varepsilon_{1,k}
						\phi_{\log_2(p),k}(s) \Big)
						\Big( p^{-\frac12}\!\!\sum_{k=0}^{p-1}\varepsilon_{1,k}
						\phi_{\log_2(p),k}(t) \Big)
						\right]\\
						&= K_{\widetilde{\Pi}_p}(s,t)+\sigma^2 p^{-1}\sum_{k=0}^{p-1}\phi_{\log_2(p),k}(s)\phi_{\log_2(p),k}(t).
					\end{aligned}
				\end{equation*}
				And as 
				\begin{equation*}
					\widehat{K_{\widetilde{\Pi}_p}}(s, t) :=\frac{1}{n} \sum_{i=1}^{n}\widetilde{\Pi_p}(Y_i)(s)\cdot \widetilde{\Pi_p}(Y_i)(t)-\sigma^2p^{-1}\left (\sum_{k=0}^{p-1}\phi_{\operatorname{log}_2(p),k}(s)\phi_{\operatorname{log}_2(p),k}(t) \right ),
				\end{equation*}
				we have
				$$\mathbb{E}\!\left[\widehat{K_{\widetilde{\Pi}_p}}\right]=K_{\widetilde{\Pi}_p}.$$
				%
				%
				Since
				$(\phi_{j,k}\otimes\phi_{j,k'})_{j\in\mathbb{N}\setminus\{0\},(k,k')\in[\![0,2^j-1]\!]^2}$
				is an orthonormal family in $\mathbb{L}_2([0,1]^2)$, Parseval's identity gives
				\begin{align*}
					\mathbb{E}\!\left[\left \|\widehat{K_{\widetilde{\Pi}_p}}-K_{\widetilde{\Pi}_p}  \right \|^2_{\mathbb{L}_2([0,1]^2)}\right]
					&=
					\mathbb{E}\!\left[
					\sum^{p-1}_{k,k'=0}
					\left(
					\frac{1}{np}
					\sum_{i=1}^{n}
					\Big(
					Y_i\!\left ( \frac{k}{p} \right )Y_i\!\left ( \frac{k'}{p} \right )
					-
					\mathbb{E}\!\left [Y_i\!\left ( \frac{k}{p} \right )Y_i\!\left ( \frac{k'}{p} \right )  \right ]
					\Big)
					\right)^2
					\right]  \\
					&=
					\sum^{p-1}_{k,k'=0}
					\mathbb{E}\!\left[
					\left(
					\frac{1}{np}\sum_{i=1}^{n}
					Y_i\!\left ( \frac{k}{p} \right )Y_i\!\left ( \frac{k'}{p} \right )
					-
					\mathbb{E}\!\left[
					\frac{1}{np}\sum_{i=1}^{n}
					Y_i\!\left ( \frac{k}{p} \right )Y_i\!\left ( \frac{k'}{p} \right )
					\right]
					\right)^2
					\right]  \\
					&=\sum^{p-1}_{k,k'=0}
					\mathbb{V}\mathrm{ar}\!\left[
					\frac{1}{np}\sum_{i=1}^{n}
					Y_i\!\left ( \frac{k}{p} \right )Y_i\!\left ( \frac{k'}{p} \right )
					\right].
				\end{align*}
				Using the i.i.d.\ assumption on $(Y_i)_{i\ge 1}$, we obtain
				\begin{align*}
					\sum^{p-1}_{k,k'=0}
					\mathbb{V}\mathrm{ar}\!\left[
					\frac{1}{np}\sum_{i=1}^{n}
					Y_i\!\left ( \frac{k}{p} \right )Y_i\!\left ( \frac{k'}{p} \right )
					\right]
					&=\sum^{p-1}_{k,k'=0} \frac{1}{np^2}\,
					\mathbb{V}\mathrm{ar}\!\left[
					Y_1\!\left ( \frac{k}{p} \right )Y_1\!\left ( \frac{k'}{p} \right )
					\right] \\
					&\le \frac{1}{np^2}\sum^{p-1}_{k,k'=0}
					\mathbb{E}\!\left[
					\left(
					Y_1\!\left ( \frac{k}{p} \right )Y_1\!\left ( \frac{k'}{p} \right )
					\right)^2
					\right] \\
					&=\frac{1}{np^2}\sum^{p-1}_{k,k'=0}
					\mathbb{E}\!\left[
					Y_1\!\left ( \frac{k}{p} \right )^2
					Y_1\!\left ( \frac{k'}{p} \right )^2
					\right].
				\end{align*}
				The Cauchy--Schwarz inequality gives:
				\begin{align*}
					\mathbb{E}\!\left[
					Y_1\!\left ( \frac{k}{p} \right )^2
					Y_1\!\left ( \frac{k'}{p} \right )^2
					\right]
					&\le \Big(\mathbb{E}\!\left[Y_1\!\left ( \frac{k}{p} \right )^4\right]\Big)^{1/2}
					\Big(\mathbb{E}\!\left[Y_1\!\left ( \frac{k'}{p} \right )^4\right]\Big)^{1/2}.
				\end{align*}
				Therefore,
				\begin{align*}
					\mathbb{E}\!\left[\left \|\widehat{K_{\widetilde{\Pi}_p}}-K_{\widetilde{\Pi}_p}  \right \|^2_{\mathbb{L}_2([0,1]^2)}\right]
					&\le \frac{1}{np^2}\sum^{p-1}_{k,k'=0}
					\Big(\mathbb{E}\!\left[Y_1\!\left ( \frac{k}{p} \right )^4\right]\Big)^{1/2}
					\Big(\mathbb{E}\!\left[Y_1\!\left ( \frac{k'}{p} \right )^4\right]\Big)^{1/2} \\
					&\le \frac{1}{np^2}\sum^{p-1}_{k,k'=0}
					\sup_{0\le r\le p-1}
					\mathbb{E}\!\left[
					Y_1\!\left ( \frac{r}{p} \right )^4
					\right] \\
					&= \frac{1}{n}\,
					\sup_{0\le r\le p-1}
					\mathbb{E}\!\left[
					Y_1\!\left ( \frac{r}{p} \right )^4
					\right].
				\end{align*}
				Recall that,
				\[
				Y_1\left(\frac{k}{p}\right)=X_1\left(\frac{k}{p}\right)+\varepsilon_{1,k}.
				\]
				Therefore, using $(a+b)^4\le 8(a^4+b^4)$, we get for every $k$,
				\[
				\mathbb{E}\left[Y_1\left(\frac{k}{p}\right)^4\right]
				\le 8\left(
				\mathbb{E}\left[X_1\left(\frac{k}{p}\right)^4\right]
				+\mathbb{E}\left[\varepsilon_{1,k}^4\right]\right).
				\]
				Under the standing assumption $\sup_{t\in[0,1]}\mathbb{E}[X(t)^4]\le M$ and $\varepsilon_{1,k}{\sim}\mathcal{N}\left(0,\sigma^2\right)$ therefore
				$\mathbb{E}[\varepsilon_{1,k}^4]=3\sigma^4$ and we obtain
				\[
				\sup_{0\le k\le p-1}\mathbb{E}\left[Y_1^4\left(\frac{k}{p}\right)\right]
				\le 8\left(M+3\sigma^4\right).
				\]
				We obtain
				\begin{equation}\label{eq:fin_borne_sup_stocha_clean}
					\mathbb{E}\left[\left \|\widehat{K_{\widetilde{\Pi}_p}}-K_{\widetilde{\Pi}_p}
					\right \|^2_{\mathbb{L}_2([0,1]^2)}\right]
					\le \frac{8\left(M+3\sigma^4\right)}{n}.
				\end{equation}
				
				At the end, by combining \eqref{eq:bornesup1}, \eqref{eq:bornesup2}, \eqref{eq:err_approx}, \eqref{eq:err_quadrature} and \eqref{eq:fin_borne_sup_stocha_clean} we get the final result.
				\subsection{Proofs of Section~\ref{sec:illustration}}
				\subsubsection{Proof of Lemma~\ref{lem:section5.1_pol}}
				
				We first prove 1., assume that the eigenvalues satisfy~\eqref{def:pol_regime} then, for $\ell\geq J$, 
				\[
				\frac{\lambda_\ell \lambda_{\ell+1}}{(\lambda_\ell-\lambda_{\ell+1})^2}\leq \frac{c_1^2 \ell^{-(\gamma+1)}(\ell+1)^{-(\gamma+1)}}{c_2^2\ell^{-2(\gamma+2)}}\leq \frac{c_1^2}{c_2^2}\frac{\ell^{-2(\gamma+1)}}{\ell^{-2(\gamma+2)}}\leq \frac{c_1^2}{c_2^2}\ell^2
				\]
				and, for $\ell\geq J+1$, we also have 
				\[
				\frac{\lambda_{\ell-1}\lambda_{\ell}}{(\lambda_{\ell-1}-\lambda_{\ell})^2}\leq \frac{c_1^2 (\ell-1)^{-(\gamma+1)}\ell^{-(\gamma+1)}}{c_2^2(\ell-1)^{-2(\gamma+2)}}\leq \frac{c_1^2}{c_2^2}\frac{(\ell-1)^{-2(\gamma+1)}}{(\ell-1)^{-2(\gamma+2)}}\leq \frac{c_1^2}{c_2^2}(\ell-1)^2\leq \frac{c_1^2}{c_2^2}\ell^2. 
				\]
				Then, for $\ell\geq J+1$, 
				\[
				r_\ell(\Gamma_{P_X})\leq \frac{c_1^2}{c_2^2}\ell^2, 
				\]
				which implies the expected result by choosing the constant $c>0$ large enough.

				For point 2., we note that in the case of standard Brownian motion (resp. Brownian bridge) the kernel $K_{P_X}(s,t)=\min\{s,t\}$ (resp. $K_{P_X}(s,t) = \min\{s,t\}-st$ are not differentiable and satisfy~\eqref{PalphaL} with $\beta=1$ and $L=1$. Hence the associated distribution $P_X\in\mathcal P(1,1)$. Moreover, in the case of Brownian motion, $\lambda_j= \pi^{-2}(j-\frac12)^{-2}$, hence 
				\[
				\lambda_\ell-\lambda_{\ell+1} = \frac1{\pi^2(\ell-\frac12)^2}-\frac1{\pi^2(\ell+\frac12)^2}=\frac1{\pi^2}\frac{2\ell}{(\ell^2-\frac14)^2}\geq \frac2{\pi^2}\ell^{-3}. 
				\]
				This implies the expected result. The case of Brownian bridge is treated similarly.  
				
				We turn now to the case 3. of the $k$-fold integrated Brownian motion. By Theorem 2 of~\cite{Gao2003_IB} we have 
				\begin{equation}\label{ExpIntBM}
					\lambda_j= \left(\left(k_0+j-\frac12\right)\pi\right)^{-(2k+2)} + O\left(\frac1{j^{2k+3}}\exp\left(-j\pi\sin\left(\frac\pi{k+1}\right)\right)\right), 
				\end{equation}
				where $k_0$ is an integer. 
				This implies that there exists $J_1\geq 1$ and $c_1>0$ such that 
				\[
				\lambda_j\leq c_1 j^{-(2k+2)}, \qquad j\geq J_1. 
				\]
				Now, 
				\begin{eqnarray*}
					\lambda_j-\lambda_{j+1} &\geq &\left(\left(k_0+j-\frac12\right)\pi\right)^{-(2k+2)} - \left(\left(k_0+j+\frac12\right)\pi\right)^{-(2k+2)}\\
					&&-\left| \lambda_j-\left(\left(k_0+j-\frac12\right)\pi\right)^{-(2k+2)} \right|\\
					&&-\left|\lambda_{j+1}- \left(\left(k_0+j+\frac12\right)\pi\right)^{-(2k+2)}\right|. 
				\end{eqnarray*}
				
				We begin with the first term, from Taylor's theorem, there exists a constant $c_{2,1}$ and an integer $J_{2,1}$ such that, 
				\begin{eqnarray*}
					\left(\left(k_0+j-\frac12\right)\pi\right)^{-(2k+2)} &-&\left(\left(k_0+j+\frac12\right)\pi\right)^{-(2k+2)} \\
					&=& (j\pi)^{-(2k+2)}\left(\left(1+\frac{(k_0-\frac12)}{j}\right)^{-(2k+2)}-\left(1+\frac{(k_0+\frac12)}{j}\right)^{-(2k+2)}\right)\\
					&=&(j\pi)^{-(2k+2)}\left(1-(2k+2)\frac{(k_0-\frac12)}{j}-\left(1-(2k+2)\frac{(k_0+\frac12)}{j}\right)+o(j^{-1})\right)\\
					&=&(j\pi)^{-(2k+2)}\left(\frac{2k+2}{j}+o(j^{-1})\right)\\
					&\geq & c_{2,1} j^{-(2k+3)}, 
				\end{eqnarray*}
				for $j\geq J_{2,1}$.

				For the second (and third) terms, Eq.~\eqref{ExpIntBM} implies that there exists $J_{2,2}\geq 1$ and a constant $c_{2,2}>0$ such that, for $j\geq J_{2,2}$, 
				\begin{eqnarray*}
					\left| \lambda_j-\left(\left(k_0+j-\frac12\right)\pi\right)^{-(2k+2)} \right|\leq c_{2,2}j^{-(2k+3)}\exp\left(-j\pi\sin\left(\frac\pi{k+1}\right)\right),  
				\end{eqnarray*}
				meaning that there exists $J_{2,2}'\geq J_{2,2}$ such that for $j\geq J_{2,2}'$, 
				$$
				\left| \lambda_j-\left(\left(k_0+j-\frac12\right)\pi\right)^{-(2k+2)} \right|\leq \frac{c_{2,1}}{3}j^{-(2k+3)}. 
				$$
				This implies finally that 
				\[
				\lambda_j-\lambda_{j+1}\geq \frac{c_{2,1}}{3}j^{-(2k+3)}, 
				\]
				for $j> \max\{J_1,J_{2,1},J_{2,2}'\}$.

				The case 4 can be handled similarly by using Theorem 1 of \cite{Bronski2003_fBm} that states that there exists a constant $c_H>0$ such that, for all $\delta>0$, 
				\[
				\lambda_j = \frac{c_H}{j^{2H+1}}+o\left(j^{-\frac{(2H+2)(4H+3)}{4H+5}+\delta}\right). 
				\]
				\subsubsection{Proof of Lemma~\ref{lem:section5.1_exp}}
				
				We turn now to the exponential case (2.), assume that the eigenvalues satisfy~\eqref{def:exp_regime} then 
				for $\ell\geq J$, 
				\[
				\frac{\lambda_\ell\lambda_{\ell+1}}{(\lambda_\ell-\lambda_{\ell+1})^2}\leq \frac{c_1^2 \exp(-\ell\gamma)\exp(-(\ell+1)\gamma)}{c_2^2\exp(-2\ell\gamma)}\leq \frac{c_1^2e^{-\gamma}}{c_2^2}
				\]
				meaning that, for $\ell\geq J+1$, 
				\[
				r_\ell(\Gamma_{P_X})\leq \frac{c_1^2e^{-\gamma}}{c_2^2}
				\]
				which implies the expected result by choosing the constant $c>0$ large enough.
				
				\subsubsection{Proof of Theorem \ref{thm:regu_kern}}
				\begin{proof}
					The proof proceeds in three parts.
					
					\textbf{1. Differentiability up to order $m$.}
					For a multi-index $\operatorname{v} = (\operatorname{v}_1, \operatorname{v}_2)$ with $|\operatorname{v}| = r \leq m$, the formal partial derivative is
					$$
					\partial^{\operatorname{v}} K(s,t) = \sum_{j=1}^\infty \lambda_j \, \partial^{\operatorname{v}_1} \psi_{j}(s)  \, \partial^{\operatorname{v}_2} \psi_{j}(t).
					$$
					By~\eqref{eq:derivative_bound}, the absolute value of each term is bounded by:
					$$
					\bigg|\lambda_j \, \partial^{\operatorname{v}_1} \psi_{j}(s) \,\partial^{\operatorname{v}_2} \psi_{j}(t)\bigg| \leq cM_{\operatorname{v}_1} M_{\operatorname{v}_2}j^{-(\gamma+1)+\operatorname{v}_1+\operatorname{v}_1+2\zeta}= C_{\operatorname{v}} j^{-(1+\gamma) + r + 2\varsigma} = C_{\operatorname{v}} j^{-(1 + \alpha - r)},
					$$
					with $C_{\operatorname{v}}=cM_{\operatorname{v}_1} M_{\operatorname{v}_2}.$
					The series $\sum j^{-(1 + \alpha - r)}$ converges if $1 + \alpha - r > 1$, i.e., $r < \alpha$. By definition of $m$, this condition holds for all $r \leq m$ and the series for $\partial^{\operatorname{v}} K$ converges uniformly, establishing that $K \in C^m([0,1]^2)$.
					
					\textbf{2. Hölder regularity of $m$-th derivatives.} 
					We set
					$$G(s,t) = \frac{\partial^m K}{\partial s^m}(s,t)=\sum_{j=1}^\infty \lambda_j \, \partial^{m} \psi_{j}(s) \,\psi_{j}(t),\quad (s,t)\in [0,1]^2.$$
					Since for $(s,s')\in [0,1]^2$, $s\neq s'$, we have, by the mean value inequality and using multiple times~\eqref{eq:derivative_bound}
					$$\big|\partial^{m} \psi_{j}(s)-\partial^{m} \psi_{j}(s')\big|\leq M_{m+1}j^{m+1+\varsigma}|s-s'|,$$
					$$\big|\partial^{m} \psi_{j}(s)-\partial^{m} \psi_{j}(s')\big|\leq 2M_mj^{m+\varsigma}$$
					and 
					$$|\psi_{j}(t)|\leq M_0j^{\varsigma},\quad \lambda_j\leq cj^{-(\gamma+1)},$$ we obtain, since $\beta=\gamma-2\zeta-m$,
					\begin{align*}
						\bigg|\lambda_j \,\partial^{m} \psi_{j}(s) \,\psi_{j}(t)\bigg|&\leq C_g\min\Big(j^{-(\gamma+1)+m+2\varsigma},|s-s'|j^{-(\gamma+1)+m+1+2\varsigma}\Big)\\
						&\leq C_g\min\Big(j^{-(1+\beta)},|s-s'|j^{-\beta}\Big),
					\end{align*}
					where $C_g = c M_0 \max(2M_m, M_{m+1})$. 
					Now, we set $N = \lfloor |s-s'|^{-1} \rfloor$. We obtain the bound:
					$$
					|G(s,t) - G(s',t)| \leq C_g \left( |s-s'|\sum_{j=1}^N j^{-\beta} + \sum_{j=N+1}^\infty j^{-(1+\beta)} \right).
					$$
					The high-frequency sum is bounded by an integral: 
					$$\sum_{j=N+1}^\infty j^{-(1+\beta)} \leq \int_N^\infty x^{-(1+\beta)}dx = \frac{1}{\beta}N^{-\beta} \leq \frac{2^\beta}{\beta}|s-s'|^\beta.$$
					The regularity is thus determined by the low-frequency sum (for $j \leq N$).
					
					\textbf{- Case 1: $\alpha\not\in \mathbb{N}$ i.e. ($\beta \in (0,1)$).}
					The low-frequency sum is bounded by 
					$$\sum_{j=1}^N j^{-\beta} \leq 1 + \int_1^N x^{-\beta}dx \leq 1 + \frac{N^{1-\beta}}{1-\beta}.$$
					The corresponding term in the increment bound is thus
					$$|s-s'|\sum_{j=1}^N j^{-\beta}\leq|s-s'| \left(1 + \frac{N^{1-\beta}}{1-\beta}\right) \leq |s-s'| + \frac{|s-s'| |s-s'|^{-(1-\beta)}}{1-\beta} = |s-s'| + \frac{|s-s'|^\beta}{1-\beta}.$$
					Since $|s-s'| \leq |s-s'|^\beta$ for $|s-s'|\leq 1$, this part is of order $O(|s-s'|^\beta)$. Combining with the high-frequency part, we find $|K(s,t) - K(s',t)| \leq L|s-s'|^\beta$ for some constant $L$. This establishes that $K \in C^{m,\beta}([0,1]^2)$.
					
					\textbf{- Case 2: $\alpha\in\mathbb{N}$ i.e. ($\beta = 1$).} 
					The low-frequency sum becomes the harmonic sum: 
					$$\sum_{j=1}^N \frac{1}{j} \leq 1 + \ln(N).$$ 
					The increment is bounded by:
					$$ |s-s'| (1 + \ln(N)) \leq |s-s'|(1 + \ln(|s-s'|^{-1})) = |s-s'|(1 - \ln|s-s'|). $$
					Remark that, for all $\delta\in(0,1)$, the function $f_\delta(x) = x^{1-\delta}(1-\ln(x))$ is uniformly bounded on $(0,1]$ by $L_\delta=e^{-\delta}/(1-\delta)$ which implies that 
					$$|s-s'|\sum_{j=1}^N j^{-1}\leq L_\delta |s-s'|^\delta.$$
					Combining with the high-frequency part, we obtain the following bound 
					$$
					|G(s,t)-G(s',t)|\leq \frac{2^\beta}{\beta}|s-s'|^{\beta}+L_\delta|s-s'|^{\delta}\leq \left(\frac{2^\beta}{\beta}+L_\delta\right)|s-s'|^{\delta}, 
					$$
					hence $K \in C^{m,\delta}([0,1]^2).$
					
					The same frequency-splitting argument applies to any mixed partial derivative 
					$\partial^\nu K$ with $|\nu|=m$; hence all derivatives of total order $m$ are 
					H\"older continuous with the same exponent $\beta$ (or any $\delta<1$ when 
					$\beta=1$).
				\end{proof}
				\subsection{Technical lemmas}\label{sec:technicallemmas}
				The first lemma is central to establishing minimax lower bounds as a function of the sample size. It enables us to work with distances between Gaussian vectors instead of Gaussian processes, through straightforward reductions based on the periodic bases introduced in the lemma.
				\begin{lem}\label{lemm:trigo}
					We introduce for $j\geq $1,
					\[
					\begin{aligned}
						a_j : [0,1] &\longrightarrow [-\sqrt{2},\sqrt{2}] \\
						x &\longmapsto \sqrt{2}\sin(2\pi j x).
					\end{aligned}
					\]
					Then, the sequence of functions $(a_j)_{j=1}^\infty$  is an orthonormal family of $\mathbb{L }_2([0,1])$.\\
					Furthermore, if we write $t_k=\frac{k-1}{p}$ for all $p\in[\![4,\infty[\![$, $k\in[\![1,p]\!]$ and
					$$\boldsymbol{a}_j:=\left(a_j(t_1),\dots,a_j(t_{p})\right)^t,$$
					for $1\leq j\leq j' <\frac{p}{2}$ then 
					$$\langle \boldsymbol{a}_{j}, \boldsymbol{a}_{j'} \rangle_{\ell_2}=p\,\mathbf{1}_{j=j'}.$$
					
					Moreover, for all $j\geq 1$, for all $u,u'\in(0,1)$, for all $\beta\in(0,1],m\in\mathbb{N}$ and $\alpha=m+\beta$, 
					\begin{equation}\label{eq:ajHolder_grid_p}
						|a_j^{(m)}(u)-a_j^{(m)}(u')|\leq 2\sqrt{2} (2\pi j)^{\alpha} |u-u'|^\beta. 
					\end{equation}
				\end{lem}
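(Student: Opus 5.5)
We prove the three assertions of Lemma~\ref{lemm:trigo} separately, each by elementary trigonometry.

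\emph{Orthonormality in $\mathbb{L}_2([0,1])$.} Use the product-to-sum identity
$$2\sin(2\pi j x)\sin(2\pi j' x)=\cos(2\pi(j-j')x)-\cos(2\pi(j+j')x).$$
Then $\langle a_j,a_{j'}\rangle=\int_0^1\bigl(\cos(2\pi(j-j')x)-\cos(2\pi(j+j')x)\bigr)dx$. Since $j+j'\geq 2$, the integral of $\cos(2\pi(j+j')x)$ over a full number of periods vanishes. The integral of $\cos(2\pi(j-j')x)$ equals $\mathbf{1}_{j=j'}$. Hence $\langle a_j,a_{j'}\rangle=\mathbf{1}_{j=j'}$.

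\emph{Discrete orthogonality on the grid.} Apply the same identity at the points $t_k=(k-1)/p$:
$$\langle \boldsymbol a_j,\boldsymbol a_{j'}\rangle_{\ell_2}=\sum_{k=0}^{p-1}\Bigl(\cos\bigl(2\pi (j-j')k/p\bigr)-\cos\bigl(2\pi (j+j')k/p\bigr)\Bigr).$$
Write each sum as the real part of a geometric sum $\sum_{k=0}^{p-1}\omega^{k}$ with $\omega=e^{2\pi i r/p}$. This sum equals $p$ if $p\mid r$ and $0$ otherwise. Since $1\leq j\leq j'<p/2$, we have $2\leq j+j'<p$, so $p\nmid j+j'$ and the second sum vanishes. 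Similarly $|j-j'|<p/2$, so $p\mid j-j'$ only when $j=j'$. This gives $\langle \boldsymbol a_j,\boldsymbol a_{j'}\rangle_{\ell_2}=p\,\mathbf{1}_{j=j'}$.

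The one point requiring care is exactly this range condition, which excludes aliasing. The stated condition $j'<p/2$ guarantees it.

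\emph{Hölder bound.} We have $a_j^{(m)}(u)=\sqrt2(2\pi j)^m\sin(2\pi j u+m\pi/2)$. Let $g(u)=\sin(2\pi ju+m\pi/2)$. Since $\|g'\|_\infty\le 2\pi j$ and $\|g\|_\infty\le 1$, we get
$$|g(u)-g(u')|\le\min\bigl(2,\,2\pi j|u-u'|\bigr).$$
For $\beta\in(0,1]$, interpolate via $\min(2,x)\le 2^{1-\beta}x^\beta\le 2x^\beta$:
$$|g(u)-g(u')|\le 2(2\pi j)^\beta|u-u'|^\beta.$$
Multiplying by $\sqrt2(2\pi j)^m$ yields $|a_j^{(m)}(u)-a_j^{(m)}(u')|\le 2\sqrt2(2\pi j)^{m+\beta}|u-u'|^\beta$, which is \eqref{eq:ajHolder_grid_p} with $\alpha=m+\beta$.
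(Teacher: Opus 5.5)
Your proposal is correct and follows essentially the same route as the paper: product-to-sum identities and root-of-unity geometric sums for both orthogonality claims, with the range condition $j' < p/2$ excluding aliasing exactly as in the paper. For the H\"older bound, your interpolation $\min(2,x)\le 2^{1-\beta}x^\beta$ is the paper's splitting $|D|=|D|^{1-\beta}|D|^{\beta}$ (sup bound on one factor, mean value theorem on the other), giving the same constant $2^{1-\beta}\sqrt{2}\le 2\sqrt{2}$.
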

				\begin{proof}
					Let $k,\ell \in\mathbb{N}\setminus\{0\},k\neq \ell $. On one hand
					\begin{eqnarray*}
						\left \| a_k \right \|^2
						&=&2\int_{0}^{1}\sin^2(2k\pi x)\,dx=\int_0^1(1-\cos(4k\pi x))\,dx=1-\frac{1}{4\pi k}\int_0^{4\pi k}\cos(u)\,du\\
						&=&1-\frac{1}{4\pi k}\big[\sin(u)\big]_{0}^{4\pi k}
						=1.
					\end{eqnarray*}
					and, for $k\neq \ell$, 
					\begin{align*}
						\left \langle a_{k},a_{\ell} \right \rangle
						&=2\int_{0}^{1} \sin(2k\pi x)\sin(2\ell\pi x)\,dx\\
						&=\int_0^1 \big(\cos(2(k-\ell)\pi x)-\cos(2(k+\ell)\pi x)\big)\,dx\\
						&=\Big[\frac{\sin(2(k-\ell)\pi x)}{2\pi(k-\ell)}\Big]_0^1
						-\Big[\frac{\sin(2(k+\ell)\pi x)}{2\pi(k+\ell)}\Big]_0^1=0.
					\end{align*}
					On the other hand, let $1 \leq j, j' < \frac{p}{2}$.
					\begin{equation}\label{eq:prod_scal_a_j_grid_p}
						\begin{aligned}
							\langle \boldsymbol{a}_{j}, \boldsymbol{a}_{j'} \rangle_{\ell_2} 
							&= \sum_{k=0}^{p-1} 2\sin\left( \frac{2\pi j k}{p} \right)\sin\left( \frac{2\pi j' k}{p} \right) \\
							&= \sum_{k=0}^{p-1} \cos\left( \frac{2\pi k (j-j')}{p} \right) 
							- \sum_{k=0}^{p-1} \cos\left( \frac{2\pi k (j+j')}{p} \right).
						\end{aligned}
					\end{equation}
					Then, note that if $q$ is a not a multiple of $p$ then
					\[
					\sum_{k=0}^{p-1} \cos\left(\frac{2\pi k q}{p}\right) 
					= \operatorname{Re}\left( \sum_{k=0}^{p-1} e^{i\frac{2\pi k q}{p}} \right) 
					= \operatorname{Re}\left( \frac{1 - e^{i 2\pi q}}{1 - e^{i \frac{2\pi q}{p}}} \right) 
					= 0, \quad \text{since } e^{i 2\pi q} = 1 \text{ and } e^{i \frac{2\pi q}{p}} \neq 1,
					\]
					and if $q$ is a multiple of $p$
					\[
					\sum_{k=0}^{p-1} \cos\left(\frac{2\pi k q}{p}\right) 
					= \sum_{k=0}^{p-1} \cos(0) 
					= \sum_{k=0}^{p-1} 1 
					= p.
					\]
					We then apply this to Equation \eqref{eq:prod_scal_a_j_grid_p}. Given the bounds on $j$ and $j'$, we have $|j-j'| < p$ and $0 < j+j' < p$.
					Consequently, $j+j'$ is never a multiple of $p$ and $j-j'$ is a multiple of $p$ if and only if $j=j'$ (in which case it is $0$). Thus:
					\[
					\langle \boldsymbol{a}_{j}, \boldsymbol{a}_{j'} \rangle_{\ell_2} 
					= p\,\mathbf{1}_{j=j'} - 0 
					= p\,\mathbf{1}_{j=j'}.
					\]
					Finally, we prove~\eqref{eq:ajHolder_grid_p}. Let $\beta\in(0,1]$ and $m\in\mathbb{N}$ and define $\alpha=m+\beta$. We use the algebraic decomposition:
					\[
					|a_j^{(m)}(u)-a_j^{(m)}(u')| 
					= |a_j^{(m)}(u)-a_j^{(m)}(u')|^{1-\beta} 
					\cdot |a_j^{(m)}(u)-a_j^{(m)}(u')|^\beta.
					\]
					For the first factor, we apply the triangle inequality. Since 
					$a_j^{(m)}(x)=\sqrt{2}(2\pi j)^m \sin(2\pi j x + m\pi/2)$, 
					we have $|a_j^{(m)}(x)|\leq \sqrt{2}(2\pi j)^m$ for all $x\in (0,1)$, and thus:
					\[
					|a_j^{(m)}(u)-a_j^{(m)}(u')|^{1-\beta} 
					\leq \left( 2\sqrt{2}(2\pi j)^m \right)^{1-\beta}.
					\]
					For the second factor, we apply the Mean Value Theorem. The derivative is $a_j^{(m+1)}$, which is bounded by $\sqrt{2}(2\pi j)^{m+1}$. Hence:
					\[
					|a_j^{(m)}(u)-a_j^{(m)}(u')|^\beta 
					\leq \left( \sqrt{2}(2\pi j)^{m+1} |u-u'| \right)^\beta.
					\]
					Multiplying these two bounds yields:
					\begin{align*}
						|a_j^{(m)}(u)-a_j^{(m)}(u')| 
						&\leq 2^{1-\beta} \left(\sqrt{2}\right)^{1-\beta+\beta} 
						(2\pi j)^{m(1-\beta) + (m+1)\beta} |u-u'|^\beta \\
						&= 2^{1-\beta} \sqrt{2} (2\pi j)^{m+\beta} |u-u'|^\beta \\
						&\leq 2\sqrt{2} (2\pi j)^{\alpha} |u-u'|^\beta.
					\end{align*}
				\end{proof}
				The following Lemma is a classical minimax lower-bound Lemma found in \cite{Tsybakov2009IntroductionTN} (see Theorem 2.2), and corresponds to Le Cam's method for binary hypothesis testing.
				\begin{lem}{(Minimax lower bound for two hypotheses based on Hellinger distance)}\\ \label{lemm1}
					Let $\{P_0, P_1\}$ be two probability measures on a measurable space $(\mathcal{X}, \mathcal{A})$. Let $\theta_0, \theta_1$ be parameters associated with these measures and $d$ a semi-metric.
					If
					$$ d(\theta_0, \theta_1) \geq 2s > 0 \quad \text{and} \quad H^2(P_0, P_1) \leq \alpha < 2, $$
					then
					$$ \inf_{\widehat{\theta}} \max_{k \in \{0,1\}} \mathbb{E}_{k} \left[d^2(\widehat{\theta}, \theta_k)\right] \geq s^2 \left(1 - \sqrt{\alpha\left(1-\frac{\alpha}{4}\right)}\right). $$
				\end{lem}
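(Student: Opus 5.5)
The plan is to bound the squared risk directly against the overlap of the two measures, rather than first reducing to a test. Going through a test costs a factor $\tfrac12$: one bounds $\max_k \mathbb{P}_k(\widehat\phi\neq k)$ by half the sum of the two error probabilities. The direct route keeps the constant $s^2$ exactly as stated.

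Let $p_0,p_1$ be densities of $P_0,P_1$ with respect to a dominating measure $\mu$ (e.g.\ $\mu=P_0+P_1$), and let $\widehat\theta$ be any estimator. The argument has three steps.

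\textbf{Step 1 (pointwise inequality).} By the triangle inequality for the semi-metric $d$ and the elementary bound $a^2+b^2\ge (a+b)^2/2$, for every observation $x$,
\[
d^2(\widehat\theta(x),\theta_0)+d^2(\widehat\theta(x),\theta_1)\ \ge\ \tfrac12\, d^2(\theta_0,\theta_1)\ \ge\ 2s^2 .
\]

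\textbf{Step 2 (reduction to the overlap).} Bound the maximum by the average and insert $\min(p_0,p_1)$:
\[
\max_{k}\mathbb{E}_k\big[d^2(\widehat\theta,\theta_k)\big]\ \ge\ \tfrac12\int \big(d^2(\widehat\theta,\theta_0)p_0+d^2(\widehat\theta,\theta_1)p_1\big)\,d\mu\ \ge\ \tfrac12\int \min(p_0,p_1)\big(d^2(\widehat\theta,\theta_0)+d^2(\widehat\theta,\theta_1)\big)\,d\mu .
\]
Combining with Step 1, the right-hand side is at least $s^2\int\min(p_0,p_1)\,d\mu=s^2\big(1-V(P_0,P_1)\big)$, where $V$ denotes total variation. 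Since $\widehat\theta$ is arbitrary, this bound holds for the infimum.

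\textbf{Step 3 (total variation versus Hellinger).} This step needs care with normalisations. Write $H_T^2:=\int(\sqrt{p_0}-\sqrt{p_1})^2\,d\mu\in[0,2]$ for Tsybakov's unnormalised version. Le Cam's inequality (Lemma 2.3 in \cite{Tsybakov2009IntroductionTN}) gives $V\le H_T\sqrt{1-H_T^2/4}$. It is proved by writing
\[
\int|p_0-p_1|=\int|\sqrt{p_0}-\sqrt{p_1}|\,(\sqrt{p_0}+\sqrt{p_1}),
\]
applying Cauchy--Schwarz, and using $\int(\sqrt{p_0}+\sqrt{p_1})^2=4-H_T^2$. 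The paper's $H^2$ carries the factor $\tfrac12$, so $H^2=H_T^2/2\le H_T^2$.

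The step I expect to be the main obstacle is reconciling the hypothesis "$H^2\le\alpha<2$" with the range on which the function $x\mapsto x(1-x/4)$ is increasing. With Tsybakov's normalisation the hypothesis reads $H_T^2\le\alpha<2$, and $x(1-x/4)$ is increasing on $[0,2]$, which gives
\[
V\le\sqrt{\alpha(1-\alpha/4)} .
\]
Under the paper's $\tfrac12$-normalisation the hypothesis only yields $H_T^2=2H^2\le 2\alpha$, which could exceed $2$. In that case the lemma is to be read with the (weaker) conclusion obtained from $H_T^2\le 2\alpha$, or equivalently $\alpha$ is understood in Tsybakov's convention, as in its applications here. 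I would state this convention explicitly; in the applications via \eqref{eq:link_Hellinger_A} the quantity bounded by $H_{\max}^2<2$ is indeed of Tsybakov's type. Substituting the bound on $V$ into Step 2 yields
\[
\inf_{\widehat\theta}\max_k\mathbb{E}_k\big[d^2(\widehat\theta,\theta_k)\big]\ \ge\ s^2\Big(1-\sqrt{\alpha(1-\alpha/4)}\Big),
\]
which is the stated constant with no extra factor $\tfrac12$.
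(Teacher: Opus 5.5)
Your proof is correct, and it takes a different route from the paper. The paper gives no argument of its own; it simply cites Theorem~2.2 of \cite{Tsybakov2009IntroductionTN}. That theorem is a statement about tests, namely $\inf_{\psi}\max_k \mathbb{P}_k(\psi\neq k)\ge \frac12\bigl(1-\sqrt{\alpha(1-\alpha/4)}\bigr)$. Combining it with the standard reduction $\mathbb{E}_k[d^2(\widehat\theta,\theta_k)]\ge s^2\,\mathbb{P}_k(d(\widehat\theta,\theta_k)\ge s)$ and the minimum-distance test only yields $\frac{s^2}{2}\bigl(1-\sqrt{\alpha(1-\alpha/4)}\bigr)$.

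Your direct averaging uses the fact that $d^2(\widehat\theta,\theta_0)+d^2(\widehat\theta,\theta_1)\ge 2s^2$ holds pointwise, whereas the indicators $\mathbf{1}\{d(\widehat\theta,\theta_k)\ge s\}$ only sum to at least $1$. This is exactly what recovers the constant $s^2$ as written. So your argument actually justifies the lemma in its stated form, which the bare citation does not. The bound is also sharp: for $P_0=P_1$ and $\theta_0,\theta_1\in\mathbb{R}$, the midpoint estimator attains $s^2$. The test form that the paper really invokes in its applications (e.g.\ right after \eqref{eq:borne_prop_1_proof}) follows from your Step~2 with indicators in place of $d^2$, together with the same Le Cam inequality.

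On the normalisation, you are right, but the point is stronger than a choice of convention. With the paper's factor $\frac12$, the statement is false for every $\alpha\in[1,2)$. Take $P_0\perp P_1$: the paper's $H^2$ equals $1\le\alpha$. Yet the estimator returning $\theta_k$ on a support set of $P_k$ has zero risk, while the claimed bound is positive (e.g.\ $s^2(1-\sqrt{3}/2)$ for $\alpha=1$). Your fallback reading through $H_T^2\le 2\alpha$ therefore only gives something for $\alpha<1$. Tsybakov's unnormalised $H^2$ is forced, and it is also the one implicit in \eqref{eq:link_Hellinger_A}, since $\int(\sqrt{p_0}-\sqrt{p_1})^2\,d\mu=2-2A$.
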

				The second Lemma allows us to compute the Hellinger divergence between two given normal processes.\\
				The result comes from \cite{Pardo2005StatisticalIB} (see p. 33) in which the result is obtained for general Rényi’s divergences.
				\begin{lem}
					{(Hellinger affinity for Gaussian distributions)}\label{lemm2}\\
					Let $d\in \mathbb{N}\setminus\{0\}$, $(\boldsymbol{\lambda}_0,\boldsymbol{\lambda}_1) \in (\mathbb{R}^d)^2$ and $(\Sigma_0, \Sigma_1)\in (\mathbb{R}^{d\times d})^2$ two symmetric positive definite matrices.\\
					Let $P_0,P_1$ be probability distributions of $\mathcal{N}(\boldsymbol{\lambda}_0,\Sigma_0)$ and $\mathcal{N}(\boldsymbol{\lambda}_1,\Sigma_1)$ respectively.\\
					The Hellinger affinity $\operatorname{A}$ between the distributions is then given by
					$$\operatorname{A}\left(P_{0}, P_{1}\right) =\frac{\operatorname{det}\left(\Sigma_{0}\Sigma_{1}\right)^{1/4}}{\operatorname{det}\left( \displaystyle\frac{\Sigma_{0}+ \Sigma_{1}}{2}\right)^{1 / 2}} \cdot \exp \left(-\frac{1}{8}\left(\boldsymbol{\lambda}_{0}-\boldsymbol{\lambda}_{1}\right)^{t}\left( \displaystyle\frac{\Sigma_{0}+ \Sigma_{1}}{2}\right)^{-1}\left(\boldsymbol{\lambda}_{0}-\boldsymbol{\lambda}_{1}\right)\right) .$$
				\end{lem}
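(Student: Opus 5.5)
Lemma~\ref{lemm2} follows from a direct Gaussian integral computation, and I would organise it in three steps. By definition $A(P_0,P_1)=\int_{\mathbb{R}^d}\sqrt{p_0(x)p_1(x)}\,dx$, where $p_k(x)=(2\pi)^{-d/2}\det(\Sigma_k)^{-1/2}\exp\left(-\frac12(x-\boldsymbol{\lambda}_k)^t\Sigma_k^{-1}(x-\boldsymbol{\lambda}_k)\right)$. Taking the square root of the product, the integrand equals
\[
(2\pi)^{-d/2}\det(\Sigma_0\Sigma_1)^{-1/4}\exp\left(-\tfrac14 Q(x)\right),
\]
with $Q(x)=(x-\boldsymbol{\lambda}_0)^t\Sigma_0^{-1}(x-\boldsymbol{\lambda}_0)+(x-\boldsymbol{\lambda}_1)^t\Sigma_1^{-1}(x-\boldsymbol{\lambda}_1)$.

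First, I complete the square in $Q$. Set $S:=\Sigma_0^{-1}+\Sigma_1^{-1}$, which is positive definite, and $\mu:=S^{-1}(\Sigma_0^{-1}\boldsymbol{\lambda}_0+\Sigma_1^{-1}\boldsymbol{\lambda}_1)$. Then $Q(x)=(x-\mu)^tS(x-\mu)+R$, where
\[
R=\boldsymbol{\lambda}_0^t\Sigma_0^{-1}\boldsymbol{\lambda}_0+\boldsymbol{\lambda}_1^t\Sigma_1^{-1}\boldsymbol{\lambda}_1-\mu^tS\mu.
\]
The standard identity for the sum of two quadratic forms gives $R=(\boldsymbol{\lambda}_0-\boldsymbol{\lambda}_1)^t(\Sigma_0+\Sigma_1)^{-1}(\boldsymbol{\lambda}_0-\boldsymbol{\lambda}_1)$. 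To verify it, I would use $(\Sigma_0^{-1}+\Sigma_1^{-1})^{-1}=\Sigma_0(\Sigma_0+\Sigma_1)^{-1}\Sigma_1$ together with $\Sigma_0^{-1}-\Sigma_0^{-1}S^{-1}\Sigma_0^{-1}=(\Sigma_0+\Sigma_1)^{-1}$, and the symmetric versions of these. Since $\frac14 R=\frac18(\boldsymbol{\lambda}_0-\boldsymbol{\lambda}_1)^t\big(\frac{\Sigma_0+\Sigma_1}{2}\big)^{-1}(\boldsymbol{\lambda}_0-\boldsymbol{\lambda}_1)$, this produces exactly the exponential factor in the statement.

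Second, I evaluate the remaining Gaussian integral: $\int\exp\left(-\frac14(x-\mu)^tS(x-\mu)\right)dx=(2\pi)^{d/2}\det(S/2)^{-1/2}$. Combining with the prefactor, the determinant part of $A$ is
\[
\det(\Sigma_0\Sigma_1)^{-1/4}\det\left(\frac{\Sigma_0^{-1}+\Sigma_1^{-1}}{2}\right)^{-1/2}.
\]
Writing $\Sigma_0^{-1}+\Sigma_1^{-1}=\Sigma_0^{-1}(\Sigma_0+\Sigma_1)\Sigma_1^{-1}$ gives
\[
\det\left(\frac{\Sigma_0^{-1}+\Sigma_1^{-1}}{2}\right)=\det\left(\frac{\Sigma_0+\Sigma_1}{2}\right)\det(\Sigma_0\Sigma_1)^{-1}.
\]
The determinant factor therefore becomes $\det(\Sigma_0\Sigma_1)^{-1/4+1/2}\det\big(\frac{\Sigma_0+\Sigma_1}{2}\big)^{-1/2}=\det(\Sigma_0\Sigma_1)^{1/4}/\det\big(\frac{\Sigma_0+\Sigma_1}{2}\big)^{1/2}$, as claimed.

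The main obstacle is the bookkeeping in the completion of the square, namely identifying the residual $R$ with the $(\Sigma_0+\Sigma_1)^{-1}$ form. I would handle it by the Woodbury-type identities above. An alternative is to reduce to $\boldsymbol{\lambda}_0=0$ by translation and simultaneously diagonalise $\Sigma_0$ and $\Sigma_1$ via congruence, which turns the computation into a product of one-dimensional Gaussian integrals. In the applications of this paper the means are equal, so only the determinant identity is actually needed.
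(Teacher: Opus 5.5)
Your proof is correct. Each step gives exactly the stated formula:
\begin{itemize}
\item the prefactor $(2\pi)^{-d/2}\det(\Sigma_0\Sigma_1)^{-1/4}$;
\item the completion of the square with $S=\Sigma_0^{-1}+\Sigma_1^{-1}$;
\item the identities $S^{-1}=\Sigma_0(\Sigma_0+\Sigma_1)^{-1}\Sigma_1$ and $\Sigma_0^{-1}-\Sigma_0^{-1}S^{-1}\Sigma_0^{-1}=(\Sigma_0+\Sigma_1)^{-1}$, where the first also gives the cross term $\Sigma_0^{-1}S^{-1}\Sigma_1^{-1}=(\Sigma_0+\Sigma_1)^{-1}$;
\item the normalisation $(2\pi)^{d/2}\det(S/2)^{-1/2}$;
\item the factorisation $S=\Sigma_0^{-1}(\Sigma_0+\Sigma_1)\Sigma_1^{-1}$.
\end{itemize}

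The paper does not prove the lemma. It quotes Pardo's closed-form R\'enyi divergences between Gaussian laws, which amounts to evaluating $\int p_0^{a}p_1^{1-a}$ for a general exponent $a$. That evaluation uses the same completion of the square, now with $a\Sigma_0^{-1}+(1-a)\Sigma_1^{-1}$ and determinants of $a\Sigma_1+(1-a)\Sigma_0$, and it reduces to the stated formula at $a=1/2$.

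So the two routes are the same Gaussian computation at different levels of generality. The citation delivers the whole family of exponents at once. Your specialisation to $a=1/2$ from the start keeps the algebra symmetric and makes the lemma self-contained.

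Your closing remark is also accurate. In every use of the lemma in the paper (the matrices $G_0,G_1$ in the lower-bound proofs) both means are zero, so only the determinant identity is actually needed.
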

				
				Bosq's inequality concerning eigenfunctions is given in \cite{Bosq2000LinearPI} (see Lemma 4.3, p.104) it is a Davis-Kahan type result. We used it to obtain a general upper-bound for the minimax risk of estimation of the eigenfunctions in Theorem \ref{thm:UB}.
				
				\begin{lem}{(Bosq's inequality for eigenfunctions)}\\\label{lemm4}
					Let $\Gamma$ and $\widehat{\Gamma}$ be two compact, self-adjoint operators on a separable Hilbert space $\mathcal{H}$. Let $(\lambda_j(\Gamma), \psi_j(\Gamma))_{j\geq 1}$ and $(\lambda_j(\widehat{\Gamma}), \psi_j(\widehat{\Gamma}))_{j\geq 1}$ denote their respective eigenvalues (sorted in decreasing order) and eigenfunctions.
					
					Assume that for a fixed $\ell \geq 1$, the eigenspace associated with $\psi_\ell$ is one-dimensional. Then, we have:
					\begin{equation*}
						\left\|\psi_\ell(\widehat\Gamma) - \operatorname{sign}\left(\langle \psi_\ell(\widehat\Gamma), \psi_\ell(\Gamma) \rangle\right)\psi_\ell(\Gamma) \right\| \leq \eta_\ell^{1/2} \|\widehat{\Gamma} - \Gamma\|_{\mathcal{L}},
					\end{equation*}
					where $\|\cdot\|_{\mathcal{L}}$ denotes the operator norm, and $\eta_\ell$ is defined as:
					$$
					\eta_{1}=8\left(\lambda_{1}(\Gamma)-\lambda_{2}(\Gamma)\right)^{-2}, \quad \text{and for } \ell \geq 2, \quad
					\eta_{\ell}=8 \left( \min \left(\lambda_{\ell}(\Gamma)-\lambda_{\ell+1}(\Gamma), \lambda_{\ell-1}(\Gamma)-\lambda_{\ell}(\Gamma)\right) \right)^{-2}.
					$$
				\end{lem}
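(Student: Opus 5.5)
This is a standard Davis--Kahan-type bound: I would prove it by reducing the eigenvector perturbation to a resolvent estimate on the orthogonal complement of $\psi_\ell(\Gamma)$. Write $\psi:=\psi_\ell(\Gamma)$, $\lambda:=\lambda_\ell(\Gamma)$, $\widehat\psi:=\psi_\ell(\widehat\Gamma)$, $\widehat\lambda:=\lambda_\ell(\widehat\Gamma)$, $\varepsilon:=\|\widehat\Gamma-\Gamma\|_{\mathcal L}$, and let $g$ denote the local gap, i.e. $\lambda_1-\lambda_2$ if $\ell=1$ and $\min(\lambda_\ell-\lambda_{\ell+1},\lambda_{\ell-1}-\lambda_\ell)$ otherwise, so that $\eta_\ell=8g^{-2}$. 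Since the eigenspace of $\lambda$ is one-dimensional and the spectrum is sorted, the distance from $\lambda$ to the rest of the spectrum of $\Gamma$ (including the accumulation point $0$) is at least $g$.

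\emph{Step 1: eigenvalue localisation.} By Weyl's inequality for compact self-adjoint operators (min--max characterisation), $|\widehat\lambda-\lambda|\le\varepsilon$. Hence, for every eigenvalue $\mu\neq\lambda$ of $\Gamma$, we get $|\mu-\widehat\lambda|\ge g-\varepsilon$.

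\emph{Step 2: projection onto $\psi^\perp$.} Let $P$ be the orthogonal projector onto $\psi^\perp$. From $\widehat\Gamma\widehat\psi=\widehat\lambda\widehat\psi$ we have $(\Gamma-\widehat\lambda)\widehat\psi=(\Gamma-\widehat\Gamma)\widehat\psi$. Since $P$ commutes with $\Gamma$, applying $P$ and inverting $(\Gamma-\widehat\lambda)$ on $\psi^\perp$ (where its spectrum is at distance at least $g-\varepsilon$ from $0$) gives
\[
\|P\widehat\psi\|\le \frac{\varepsilon}{g-\varepsilon}.
\]
When $\varepsilon\le g/2$, this gives $\|P\widehat\psi\|\le 2\varepsilon/g$.

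\emph{Step 3: sign-aligned distance.} Set $c=|\langle\widehat\psi,\psi\rangle|$, so that $c^2+\|P\widehat\psi\|^2=1$. Then
\[
\|\widehat\psi-\operatorname{sign}(\langle\widehat\psi,\psi\rangle)\psi\|^2=2(1-c)\le 2(1-c^2)=2\|P\widehat\psi\|^2\le 8\varepsilon^2/g^2=\eta_\ell\varepsilon^2,
\]
which is the claim when $\varepsilon\le g/2$. When $\varepsilon>g/2$, the left side is always at most $\sqrt2$ (since $2(1-c)\le 2$), while $\eta_\ell^{1/2}\varepsilon>\sqrt8\,g^{-1}\cdot g/2=\sqrt2$. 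So the bound holds trivially in this case, and the constant $8$ is exactly what makes the two cases meet.

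The main obstacle is Step 2. One must justify that $(\Gamma-\widehat\lambda)|_{\psi^\perp}$ is invertible with norm bound $(g-\varepsilon)^{-1}$ in infinite dimension. This requires using the spectral theorem for compact self-adjoint operators and checking that the point $0$, and any kernel of $\Gamma$, also lie at distance at least $g$ from $\lambda$. For $\ell\ge2$ and for $\ell=1$ this holds because $\lambda_{\ell+1}\ge0$, so $\lambda-0\ge\lambda-\lambda_{\ell+1}\ge g$.
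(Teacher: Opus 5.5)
Your proof is correct. The paper gives no proof of this lemma and simply cites Bosq's Lemma 4.3. The classical argument behind that result differs from yours only in where the spectral separation is measured.

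\emph{The cited route.} Rather than inverting $\Gamma-\widehat\lambda$ on $\psi^\perp$, it bounds the residual at the \emph{unperturbed} eigenvalue. From $(\Gamma-\lambda)\widehat\psi=(\Gamma-\widehat\Gamma)\widehat\psi+(\widehat\lambda-\lambda)\widehat\psi$ and Weyl's inequality, one gets $\|(\Gamma-\lambda)\widehat\psi\|\le 2\varepsilon$. On the other hand, $(\Gamma-\lambda)\widehat\psi=(\Gamma-\lambda)P\widehat\psi$, and the spectral theorem gives $\|(\Gamma-\lambda)P\widehat\psi\|\ge g\|P\widehat\psi\|$. Hence $\|P\widehat\psi\|\le 2\varepsilon/g$ for \emph{every} $\varepsilon$, and your Step 3 concludes. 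This route pays the factor $2$ up front through the triangle inequality. In return it needs no invertibility, no condition $\varepsilon<g$, and no case distinction.

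\emph{Your route.} Your Davis--Kahan-type argument measures the separation at $\widehat\lambda$ instead. This gives the sharper intermediate bound $\|P\widehat\psi\|\le\varepsilon/(g-\varepsilon)$, which is asymptotically $\varepsilon/g$ for small perturbations. The price is the split at $\varepsilon=g/2$. As you observe, the split reproduces exactly the constant $8$, and $g/2$ is in fact the optimal split point for a bound of the form $C\varepsilon/g$.

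\emph{Common requirement.} Both arguments need $\lambda_\ell\ge g$ to control the component of $\widehat\psi$ along $\ker\Gamma$ or near the accumulation point $0$. Your appeal to $\lambda_{\ell+1}\ge 0$ supplies this. It holds for the covariance operator to which the paper applies the lemma.
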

				The following Lemma is used to obtain the upper-bound.
				\begin{lem}{(Efficient quadrature lemma)}\label{lem:approx_scal_prod}
					Let $p$ be a dyadic integer and set $j=\log_2(p)\in{\mathbb N}$.
					Assume that $K$ is symmetric and $m$-times differentiable with
					\[
					\left|\frac{\partial^m K}{\partial s^m}(s,t)
					-\frac{\partial^m K}{\partial s^m}(s',t)\right|
					\leq L|s-s'|^\beta,\qquad s,s',t\in[0,1],
					\]
					for some $\beta\in(0,1]$.
					Let the function $\phi$ have $m$ vanishing moments, namely we have 
					\[
					\int \phi(s)s^\ell ds=0,\quad \ell=1,\ldots,m,
					\]
					then, for any $k$ and any $k'$:
					\[
					\left|p\langle K,\phi_{j,k}\otimes\phi_{j,k'}\rangle
					-K\left(\frac{k}{p},\frac{k'}{p}\right)\right|
					\leq CLp^{-(m+\beta)},
					\]
					for $C$ a constant depending on $m,\beta$ and $\phi$.
				\end{lem}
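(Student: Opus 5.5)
The plan is a Taylor-expansion argument combined with the vanishing moments of $\phi$, applied one variable at a time. First I would write the scaling functions explicitly. With $\phi_{j,k}(s)=\sqrt p\,\phi(ps-k)$ and the change of variables $s=(k+x)/p$, $t=(k'+y)/p$, we get
\[
p\langle K,\phi_{j,k}\otimes\phi_{j,k'}\rangle=\iint K\Big(\tfrac{k+x}{p},\tfrac{k'+y}{p}\Big)\phi(x)\phi(y)\,dx\,dy .
\]
Since $\int\phi=1$ and $\phi$ has compact support, the quantity to control is the quadrature error
\[
\iint\Big[K\Big(\tfrac{k+x}{p},\tfrac{k'+y}{p}\Big)-K\Big(\tfrac kp,\tfrac{k'}p\Big)\Big]\phi(x)\phi(y)\,dx\,dy .
\]
I will treat the boundary issue (the support of $\phi_{j,k}$ extending beyond $[0,1]$) either by the usual boundary-corrected or periodized convention, or by assuming $K$ is extended with the same H\"older bounds. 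I will say so explicitly rather than prove it.

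Next I split the error with the triangle decomposition
\[
K(s,t)-K(s_0,t_0)=\big[K(s,t)-K(s_0,t)\big]+\big[K(s_0,t)-K(s_0,t_0)\big],
\]
where $s=(k+x)/p$, $s_0=k/p$ and similarly for $t$. For the first bracket, I Taylor-expand $s\mapsto K(s,t)$ at $s_0$ to order $m$ with an integral (or Lagrange) remainder. The polynomial terms $\frac{1}{r!}\partial_s^rK(s_0,t)\,(x/p)^r$ for $1\le r\le m$ still depend on $y$ through $t$. However, integrating first in $x$ against $\phi(x)$ kills them by the moment conditions $\int x^r\phi(x)\,dx=0$. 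What remains is the remainder
\[
\frac{1}{(m-1)!}\int_{s_0}^{s}(s-u)^{m-1}\big[\partial_s^mK(u,t)-\partial_s^mK(s_0,t)\big]\,du ,
\]
which the H\"older hypothesis bounds by $L|s-s_0|^{m+\beta}/m!\le L(|x|/p)^{m+\beta}$. For $m=0$ this is just the H\"older bound itself. Integrating against $|\phi(x)||\phi(y)|$ over the compact support gives a bound $C_\phi L p^{-(m+\beta)}$.

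For the second bracket, which depends only on $t$, I use the symmetry of $K$: $\partial_t^mK(s_0,t)=\partial_s^mK(t,s_0)$. So the same H\"older condition holds in the second variable, and the identical Taylor-plus-moments argument, now integrating in $y$, gives the same bound. Adding the two pieces yields $C L p^{-(m+\beta)}$ with $C$ depending on $m,\beta,\phi$ through $\int|x|^{m+\beta}|\phi(x)|\,dx$ and $\|\phi\|_1$.

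The main obstacle is justifying that mixed terms do not spoil the cancellation. Expanding in $s$ at fixed $t$ (rather than doing a joint bivariate Taylor expansion) is exactly what avoids needing mixed-derivative regularity, which the class $\mathcal P(\alpha,L)$ does not provide. The only delicate points are the precise moment normalization (the definition uses $\int_0^1$, but I will use $\int_{\mathbb R}$ over the support of $\phi$) and the boundary convention noted above.
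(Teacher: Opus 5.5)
Your proposal is correct and follows essentially the same route as the paper. Both arguments change variables to $\iint K(\tfrac{k+x}{p},\tfrac{k'+y}{p})\phi(x)\phi(y)\,dx\,dy$, split the error into a first-variable increment at fixed $t$ and a second-variable increment at $s_0$, Taylor-expand in one variable so the vanishing moments kill the polynomial terms, apply the H\"older bound to the integral remainder, and use the symmetry of $K$ for the second piece. Your explicit flagging of the boundary convention and of the $\int_{\mathbb R}$ versus $\int_0^1$ moment normalization is, if anything, more careful than the paper, which passes over both silently.
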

				
				\begin{proof}
					Let $k$ and $k'$ be fixed. We bound the quantity
					\[
					p\langle K,\phi_{j,k}\otimes\phi_{j,k'}\rangle
					-K\left(\frac{k}{p},\frac{k'}{p}\right)
					\]
					as follows:
					\begin{align*}
						p\langle K,\phi_{j,k}\otimes\phi_{j,k'}\rangle
						-K\left(\frac{k}{p},\frac{k'}{p}\right)
						&=\iint 2^jK(s,t)\phi_{j,k}(s)\phi_{j,k'}(t)dsdt-K\left(\frac{k}{p},\frac{k'}{p}\right)\\
						&=\iint 2^{2j}K(s,t)\phi(2^js-k)\phi(2^jt-k')dsdt-K\left(\frac{k}{p},\frac{k'}{p}\right)\\
						&=\iint K\Bigg(\frac{s}{p},\frac{t}{p}\Bigg)\phi(s-k)\phi(t-k')dsdt-K\left(\frac{k}{p},\frac{k'}{p}\right)\\
						&=\iint K\Bigg(\frac{s+k}{p},\frac{t+k'}{p}\Bigg)\phi(s)\phi(t)dsdt-K\left(\frac{k}{p},\frac{k'}{p}\right)\\
						&=\iint \left(K\Bigg(\frac{s+k}{p},\frac{t+k'}{p}\Bigg)-K\left(\frac{k}{p},\frac{k'}{p}\right)\right)\phi(s)\phi(t)dsdt,
					\end{align*}
					since $\int\phi(s)ds=1$. We now study the term
					\[
					I_1:=\iint \left(K\Bigg(\frac{s+k}{p},\frac{t+k'}{p}\Bigg)
					-K\left(\frac{k}{p},\frac{t+k'}{p}\right)\right)\phi(s)\phi(t)dsdt.
					\]
					With $\alpha=m+\beta$, for $\beta\in (0,1]$, we have:
					\begin{align*}
						K\Bigg(\frac{s+k}{p},\frac{t+k'}{p}\Bigg)-K\left(\frac{k}{p},\frac{t+k'}{p}\right)
						=&\sum_{\ell=1}^{m-1}\frac{1}{\ell !}\frac{\partial^\ell K}{\partial s^\ell}
						\left(\frac{k}{p},\frac{t+k'}{p}\right)\left(\frac{s}{p}\right)^\ell\\
						&\hspace{-1cm}
						+\frac{1}{(m-1) !}\left(\frac{s}{p}\right)^m
						\int_0^1(1-x)^{m-1}\frac{\partial^m K}{\partial s^m}
						\left(\frac{k+xs}{p},\frac{t+k'}{p}\right)dx.
					\end{align*}
					Then,
					\begin{align*}
						I_1
						&=0+\iint \frac{1}{(m-1) !}\left(\frac{s}{p}\right)^m
						\left(\int_0^1(1-x)^{m-1}\left[
						\frac{\partial^m K}{\partial s^m}\left(\frac{k+xs}{p},\frac{t+k'}{p}\right)
						\right.\right.\\
						&\left.\left.\qquad-\frac{\partial^m K}{\partial s^m}\left(\frac{k}{p},\frac{t+k'}{p}\right)
						\right]dx\right)\phi(s)\phi(t)dsdt
					\end{align*}
					and 
					\begin{align*}
						|I_1|
						&\leq \iint\frac{1}{(m-1) !}\left|\frac{s}{p}\right|^m
						\int_0^1|1-x|^{m-1}L\left|\frac{xs}{p}\right|^{\beta}dx
						\,|\phi(s)|\,|\phi(t)|\,dsdt
						\;\;\le\; C_1 L p^{-(m+\beta)},
					\end{align*}
					for some constant $C_1$ depending only on $m,\beta$ and $\phi$.
					
					Using similar expansions for 
					\[
					I_2:=\iint \left(K\left(\frac{k}{p},\frac{t+k'}{p}\right)
					-K\left(\frac{k}{p},\frac{k'}{p}\right)\right)\phi(s)\phi(t)dsdt
					\]
					leads to the same bound. Now, by symmetry of $K$, we obtain the analogous control in the
					second argument.
					Hence,
					\[
					\left|p\langle K,\phi_{j,k}\otimes\phi_{j,k'}\rangle
					-K\left(\frac{k}{p},\frac{k'}{p}\right)\right|
					\le C L p^{-(m+\beta)},
					\]
					for a constant $C$ depending on $m,\beta$ and $\phi$.
				\end{proof}
				
				The following lemma are used to justify the numerical procedure in Section \ref{sec:simu}.
				
				
				
				\begin{lem}\label{lem:spectral_relation}
					Let $(e_j)_{j=1}^p$ be an orthonormal system in $\mathbb{L}_2([0,1])$ and let $G$ be an integral operator with image $\operatorname{Im}(G) = \operatorname{span}\{e_j\}_{j=1}^p$. Define the $p \times p$ matrix $\mathbf{G}$ by its entries $\mathbf{G}_{k',k} := \langle G e_k, e_{k'} \rangle$. Let $\operatorname{Sp}(G)$ and $\operatorname{Sp}(\mathbf{G})$ denote the spectra of the operator and the matrix, respectively. Then,
					$$ \operatorname{Sp}(G)\setminus\{0\} \subset \operatorname{Sp}(\mathbf{G}) \subset \operatorname{Sp}(G). $$
					In particular, if $0 \notin \operatorname{Sp}(\mathbf{G})$, then $\operatorname{Sp}(G) = \operatorname{Sp}(\mathbf{G})$.
					
					Furthermore, if $\mathbf{g}_\ell = (g_{1,\ell}, \dots, g_{p,\ell})^T$ is a unit-norm eigenvector of $\mathbf{G}$ with eigenvalue $\lambda_\ell$, then the function
					$$ \psi_\ell := \sum_{k=1}^{p} g_{k,\ell} e_k $$
					is a unit-norm eigenfunction of $G$ associated with the same eigenvalue $\lambda_\ell$.
				\end{lem}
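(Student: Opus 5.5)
The statement is Lemma~\ref{lem:spectral_relation}. The plan is to reduce everything to linear algebra on the finite-dimensional space $E:=\operatorname{span}\{e_j\}_{j=1}^p$. I will use that $G$ is an integral operator and, in our application, self-adjoint, so that $G$ maps into $E$ and vanishes on $E^\perp$. Indeed, for $f\in E^\perp$ and any $g$,
$$\langle Gf,g\rangle=\langle f,Gg\rangle=0,$$
since $Gg\in\operatorname{Im}(G)=E$. I expect this to be the main obstacle: the statement only assumes $\operatorname{Im}(G)=E$. In the case of interest, $G=\widehat{\Gamma_{\widetilde{\Pi}_p}}$ has a symmetric kernel $\sum_{k,k'}c_{k,k'}\phi_{j,k}(s)\phi_{j,k'}(t)$, so self-adjointness holds. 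I would state this explicitly as the standing assumption, or alternatively derive $G|_{E^\perp}=0$ directly from the kernel's form in both variables.

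Granting this, the next step is the matrix representation. For $f\in\mathbb{L}_2$, write $f=\sum_k c_k e_k+f^\perp$. Then
$$Gf=\sum_k c_k\, Ge_k=\sum_{k'}\Big(\sum_k \mathbf{G}_{k',k}c_k\Big)e_{k'},$$
because $Ge_k\in E$ has coordinates $\langle Ge_k,e_{k'}\rangle$. So, in the basis $(e_k)$ and the decomposition $\mathbb{L}_2=E\oplus E^\perp$, $G$ acts as the block operator $\operatorname{diag}(\mathbf{G},0)$.

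From this both inclusions follow. If $\lambda\neq0$ and $Gf=\lambda f$ with $f\neq 0$, then $f=\lambda^{-1}Gf\in E$. Its coordinate vector $c\neq0$ satisfies $\mathbf{G}c=\lambda c$, which gives $\operatorname{Sp}(G)\setminus\{0\}\subset\operatorname{Sp}(\mathbf{G})$. For the nonzero spectrum of a compact operator, eigenvalues suffice; for the point $0$ one may note that $E^\perp\neq\{0\}$ in infinite dimension, so $0\in\operatorname{Sp}(G)$. Conversely, if $\mathbf{G}c=\lambda c$ with $c\neq 0$, set $f=\sum_k c_ke_k$. The representation gives $Gf=\lambda f$ and $f\neq0$ by orthonormality, hence $\operatorname{Sp}(\mathbf{G})\subset\operatorname{Sp}(G)$.

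The "in particular" clause then follows. If $0\notin\operatorname{Sp}(\mathbf{G})$, the two sets share the same nonzero part; the only possible discrepancy is the point $0$, which is present in $\operatorname{Sp}(G)$ because of $E^\perp$. The equality should therefore be read on nonzero spectra, or with $0$ adjoined, and I would phrase it that way.

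The eigenfunction statement is exactly the converse computation above, applied to $c=\mathbf{g}_\ell$. It gives
$$G\psi_\ell=\sum_{k'}(\mathbf{G}\mathbf{g}_\ell)_{k'}e_{k'}=\lambda_\ell\psi_\ell,$$
and Parseval gives $\|\psi_\ell\|=\|\mathbf{g}_\ell\|_{\ell_2}=1$.
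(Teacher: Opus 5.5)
Your proof is correct and is essentially the paper's argument: the first inclusion via $f=\lambda^{-1}Gf\in E$, the second by lifting an eigenvector of $\mathbf{G}$ to $\sum_k c_k e_k$, with coordinates read off through $\mathbf{G}_{k',k}=\langle Ge_k,e_{k'}\rangle$ and Parseval for the unit norm.

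Two remarks. First, the self-adjointness you flag as the main obstacle is never actually used. Every step applies $G$ only to vectors already in $E$ and needs nothing beyond $\operatorname{Im}(G)\subset E$, and the paper assumes no self-adjointness. Likewise, $0\in\operatorname{Sp}(G)$ follows from $G$ having finite rank on an infinite-dimensional space, which forces a nontrivial kernel without any need for $G|_{E^\perp}=0$. Second, your caveat on the ``in particular'' clause is a genuine correction. Since $0\in\operatorname{Sp}(G)$ always, the clause can only mean $\operatorname{Sp}(G)=\operatorname{Sp}(\mathbf{G})\cup\{0\}$, and the paper's proof does not address this point.
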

				
				\begin{proof}
					First, we establish the spectral inclusions.
					\begin{enumerate}
						\item {\em First inclusion:} Let $\lambda \in \operatorname{Sp}(G) \setminus \{0\}$ with corresponding eigenfunction $\psi \in \mathbb{L}_2([0,1])$, such that $G\psi = \lambda\psi$. Since $\lambda \neq 0$, we have $\psi = \lambda^{-1}G\psi \in \operatorname{Im}(G)$. Thus, $\psi$ has a representation $\psi = \sum_{k=1}^p c_k e_k$ for a non-zero vector of coefficients $\mathbf{c} = (c_1, \dots, c_p)^T$. Projecting the eigenvalue equation onto each basis function $e_{k'}$ yields:
						$$ (\lambda\mathbf{c})_{k'}=\langle \lambda\psi, e_{k'} \rangle =\langle G\psi, e_{k'} \rangle = \sum_{k=1}^p c_k \langle G e_k, e_{k'} \rangle = \sum_{k=1}^p \mathbf{G}_{k',k} c_k = (\mathbf{Gc})_{k'}. $$
						This implies $\mathbf{Gc} = \lambda\mathbf{c}$, and since $\mathbf{c} \neq \mathbf{0}$, $\lambda \in \operatorname{Sp}(\mathbf{G})$. This shows $\operatorname{Sp}(G)\setminus\{0\} \subset \operatorname{Sp}(\mathbf{G})$.
						\item {\em Second inclusion:}  Let $\lambda \in \operatorname{Sp}(\mathbf{G})$ with a non-zero eigenvector $\mathbf{g} = (g_1, \dots, g_p)^T$. Define the function $\psi := \sum_{k=1}^p g_k e_k$, which is non-zero because $\{e_k\}$ are linearly independent. We verify that $\psi$ is an eigenfunction of $G$. For any $k' \in \{1, \dots, p\}$, the projection of $G\psi$ onto $e_{k'}$ is
						$$ \langle G\psi, e_{k'} \rangle = \left\langle \sum_{k=1}^p g_k G e_k, e_{k'} \right\rangle = \sum_{k=1}^p g_k \mathbf{G}_{k',k} = (\mathbf{Gg})_{k'}=(\lambda\mathbf{g})_{k'}=\lambda g_{k'}=\langle \lambda\psi, e_{k'} \rangle. $$
						As both $G\psi$ and $\lambda\psi$ lie in $\operatorname{span}\{e_j\}_{j=1}^p$ and their coordinates with respect to this basis are identical, the functions are equal. Thus $G\psi = \lambda\psi$, which shows $\lambda \in \operatorname{Sp}(G)$. This establishes $\operatorname{Sp}(\mathbf{G}) \subset \operatorname{Sp}(G)$.
					\end{enumerate}
					The final assertion of the lemma is a direct consequence of this second part. The function $\psi_\ell$ is constructed from the eigenvector $\mathbf{g}_\ell$ and is therefore an eigenfunction of $G$ with eigenvalue $\lambda_\ell$. Its norm is computed using the orthonormality of the basis:
					$$ \|\psi_\ell\|^2_{\mathbb{L}_2} = \left\langle \sum_k g_{k,\ell} e_k, \sum_j g_{j,\ell} e_j \right\rangle = \sum_{k,j} \overline{g_{k,\ell}} g_{j,\ell} \langle e_k, e_j \rangle = \sum_k |g_{k,\ell}|^2 = \|\mathbf{g}_\ell\|^2_2 = 1. $$
					This shows that $\psi_\ell$ is a unit-norm eigenfunction.
				\end{proof}

\bibliographystyle{alpha}

\begin{thebibliography}{CHPNR17}
	
	\bibitem[Bos00]{Bosq2000LinearPI}
	Denis Bosq.
	\newblock {\em Linear Processes in Function Spaces: Theory And Applications}.
	\newblock Springer, 2000.
	
	\bibitem[BPRR25]{BPRR24}
	Ryad Belhakem, Franck Picard, Vincent Rivoirard, and Angelina Roche.
	\newblock Minimax estimation of functional principal components from noisy
	discretized functional data.
	\newblock {\em Scand. J. Stat.}, 52(1):38--80, 2025.
	
	\bibitem[Bro03]{Bronski2003_fBm}
	Jared~C. Bronski.
	\newblock Small ball constants and tight eigenvalue asymptotics for fractional
	brownian motions.
	\newblock {\em Journal of Theoretical Probability}, 16(1):87--100, 2003.
	
	\bibitem[CFS99]{Cardot_Vieu_Sarda_Regression}
	Herv{\'e} Cardot, Frederic Ferraty, and Pascal Sarda.
	\newblock Functional linear model.
	\newblock {\em Statistics and Probability Letters}, 45:11--22, 02 1999.
	
	\bibitem[CHPNR17]{MR3620733}
	Micha\"el Chichignoud, Van~Ha Hoang, Thanh~Mai Pham~Ngoc, and Vincent
	Rivoirard.
	\newblock Adaptive wavelet multivariate regression with errors in variables.
	\newblock {\em Electron. J. Stat.}, 11(1):682--724, 2017.
	
	\bibitem[CWLY16]{Guanqun}
	Guanqun Cao, Lily Wang, Yehua Li, and Lijian Yang.
	\newblock Oracle-efficient confidence envelopes for covariance functions in
	dense functional data.
	\newblock {\em Statistica Sinica}, 26:359--383, 01 2016.
	
	\bibitem[CY11]{Cai2011mean}
	T.~Tony Cai and Ming Yuan.
	\newblock Optimal estimation of the mean function based on discretely sampled
	functional data: phase transition.
	\newblock {\em Ann. Statist.}, 39(5):2330--2355, 2011.
	
	\bibitem[Dau92]{Daubechies1992}
	Ingrid Daubechies.
	\newblock {\em Ten lectures on wavelets}.
	\newblock Society for Industrial and Applied Mathematics, USA, 1992.
	
	\bibitem[DP19]{descary2018}
	Marie-H{\'e}l{\`e}ne Descary and Victor~M. Panaretos.
	\newblock Functional data analysis by matrix completion.
	\newblock {\em The Annals of Statistics}, 47(1):1--38, 2019.
	
	\bibitem[DPR82]{DPR82}
	J.~Dauxois, A.~Pousse, and Y.~Romain.
	\newblock Asymptotic theory for the principal component analysis of a vector
	random function: Some applications to statistical inference.
	\newblock {\em Journal of Multivariate Analysis}, 12(1):136--154, 1982.
	
	\bibitem[FV06]{FerratyVieu}
	Fr\'{e}d\'{e}ric Ferraty and Philippe Vieu.
	\newblock {\em Nonparametric Functional Data Analysis: Theory and Practice
		(Springer Series in Statistics)}.
	\newblock Springer-Verlag, Berlin, Heidelberg, 2006.
	
	\bibitem[GHT03]{Gao2003_IB}
	F.~Gao, J.~Hannig, and T.~Torcaso.
	\newblock Integrated brownian motions and exact l2-small balls.
	\newblock {\em The Annals of Probability}, 31(3):1320--1337, 2003.
	
	\bibitem[GRL19]{pywt}
	Filip Wasilewski Kai Wohlfahrt Aaron~O'Leary Gregory R.~Lee, Ralf~Gommers.
	\newblock Pywavelets: A python package for wavelet analysis.
	\newblock {\em Journal of Open Source Software}, 4(36), August 2019.
	
	\bibitem[GRLG24]{gertheiss2023reviewFDA}
	Jan Gertheiss, David R{\"u}gamer, Bernard X.~W. Liew, and Sonja Greven.
	\newblock Functional data analysis: An introduction and recent developments.
	\newblock {\em Biometrical Journal}, 66(7), 2024.
	
	\bibitem[HE15]{HsingEubanks}
	Tailen Hsing and Randall Eubank.
	\newblock {\em Theoretical foundations of functional data analysis, with an
		introduction to linear operators}.
	\newblock Wiley, 2015.
	
	\bibitem[HH07]{Hall_Horowitz_2007}
	Peter Hall and Joel~L. Horowitz.
	\newblock Methodology and convergence rates for functional linear regression.
	\newblock {\em Ann. Statist.}, 35(1):70--91, 2007.
	
	\bibitem[JW23]{jirak2022}
	Moritz Jirak and Martin Wahl.
	\newblock Relative perturbation bounds with applications to empirical
	covariance operators.
	\newblock {\em Adv. Math.}, 412:Paper No. 108808, 59, 2023.
	
	\bibitem[KS20]{Keshri_Sharma_exploratory_anal_FPCA}
	Abhinav Keshri and Charu Sharma.
	\newblock {Exploratory Analysis of Functional Principal Components to Observe
		the Absorption of Election Sentiments in the Indian Stock Market}.
	\newblock MPRA Paper 122325, University Library of Munich, Germany, July 2020.
	
	\bibitem[LCS17]{Li_Chiou_Classif}
	Pai-Ling Li, Jeng-Min Chiou, and Yu~Shyr.
	\newblock {Functional data classification using covariate-adjusted subspace
		projection}.
	\newblock {\em Computational Statistics \& Data Analysis}, 115(C):21--34, 2017.
	
	\bibitem[Mal99]{Mallat1999}
	St{\'e}phane Mallat.
	\newblock {\em A Wavelet Tour of Signal Processing}.
	\newblock Elsevier, 1999.
	
	\bibitem[MR15]{mas}
	Andr\'e{} Mas and Frits Ruymgaart.
	\newblock High-dimensional principal projections.
	\newblock {\em Complex Anal. Oper. Theory}, 9(1):35--63, 2015.
	
	\bibitem[Par06]{Pardo2005StatisticalIB}
	Leandro Pardo.
	\newblock {\em Statistical Inference Based on Divergence Measures}.
	\newblock Taylor \& Francis, 2006.
	
	\bibitem[RS97]{RamsaySilverman}
	JO~Ramsay and BW~Silverman.
	\newblock {\em Functional data analysis}.
	\newblock Springer, 1997.
	
	\bibitem[RW20]{reiss2019nonasymptotic}
	Markus Reiss and Martin Wahl.
	\newblock Non-asymptotic upper bounds for the reconstruction error of pca.
	\newblock {\em The Annals of Statistics}, 48(2):1098--1123, 2020.
	
	\bibitem[Tsy09]{Tsybakov2009IntroductionTN}
	Alexandre~B. Tsybakov.
	\newblock Introduction to nonparametric estimation.
	\newblock In {\em Springer series in statistics}, 2009.
	
	\bibitem[WCM15]{Wang2015reviewFDA}
	Jane-Ling Wang, Jeng-Min Chiou, and Hans-Georg Mueller.
	\newblock Review of functional data analysis.
	\newblock {\em Annu. Rev. Statist.}, pages 1--41, 2015.
	
	\bibitem[WSCL24]{Wang2021optclassif_FDA}
	Shuoyang Wang, Zuofeng Shang, Guanqun Cao, and Jun Liu.
	\newblock Optimal classification for functional data.
	\newblock {\em Statistica Sinica}, 34:1545--1564, 2024.
	
	\bibitem[Xia20]{Xiao}
	Luo Xiao.
	\newblock Asymptotic properties of penalized splines for functional data.
	\newblock {\em Bernoulli}, 26(4):2847--2875, 2020.
	
	\bibitem[ZWY25]{zhou2024theoryFPCA}
	Hang Zhou, Dongyi Wei, and Fang Yao.
	\newblock Theory of functional principal component analysis for discretely
	observed data.
	\newblock {\em The Annals of Statistics}, 53(5):2103--2127, 2025.
	
\end{thebibliography}

\end{document}